\newtheorem{theorem}{Theorem}[section]
\newtheorem{lemma}[theorem]{Lemma}
\newtheorem{definition}[theorem]{Definition}
\newtheorem{example}[theorem]{Example}
\newtheorem{prop}[theorem]{Proposition}
\newtheorem{coro}[theorem]{Corollary}
\newenvironment{Proof}{\noindent{\bf Proof of Lemma \ref{the first lemma}:}}{\hfill\fbox{}\vspace*{1mm}}
\newenvironment{Proof1}{\noindent{\bf Proof of Lemma \ref{the second lemma}:}}{\hfill\fbox{}\vspace*{1mm}}
\theoremstyle{remark}
\newtheorem{remark}[theorem]{Remark}
\numberwithin{equation}{section}
\begin{document}
\title[Uncertainty principles for FMT and associated MO]{Uncertainty principles for free
metaplectic transformation and associated metaplectic operators}

\author{Ping Liang}
	\address{School of Computer Science and Engineering, Faculty of Innovation Engineering, Macau University of Science and Technology}
	\curraddr{}
	\email{pliang0208@163.com}
	\thanks{}
	
	\author{Pei Dang}
	\address{Department of Engneering Science, Faculty of Innovation Engineering, Macau University of Science and Technology}
	\curraddr{}
	\email{pdang@must.edu.mo}
	\thanks{}
	
	\author{Weixiong Mai$^{\star}$}
	\address{School of Computer Science and Engineering, Faculty of Innovation Engineering, Macau University of Science and Technology}
	\curraddr{}
	\email{wxmai@must.edu.mo}
	\thanks{$^{\star}$Corresponding author}
\keywords{Uncertainty principle, Free metaplectic transformation, Metaplectic operators}
\begin{abstract}
    In this paper, we systematically investigate the Heisenberg-Pauli-Weyl uncertainty principle for free metaplectic transformation, as well as metaplectic operators.
    Specifically, we obtain two different types of the uncertainty principle for free metaplectic transformations in terms of the so-called phase derivative, one of which can be generalized to the $L^p$-case with $1\le p\le 2$. The obtained results are valid not only for free metaplectic transformations but also for general metaplectic operators. In particular, we point out that our results are closely related to those given in \cite{Dias-deGosson-Prata}, and the relationship should be new and not exactly given in the existing literature.

\end{abstract}

\maketitle
\date{}
\dedicatory{}
 \section{Introduction}
 It is well-known that the classical Heisenberg-Pauli-Weyl (HPW) uncertainty principle plays an important role in quantum mechanics, which 
 states that the position and the momentum of a particle
cannot be both determined precisely (e.g. \cite{Fefferman}, \cite{Folland-Sitaram}, \cite{Heisenberg}). In 1946, the HPW uncertainty principle was introduced to signal analysis by Gabor (\cite{Gabor}). It states that 
 a signal
 cannot be sharply localized in
both time and Fourier frequency domains, i.e.,
\begin{equation}\label{OSUP}
    \Delta x^{2}\Delta w^{2}\ge \frac{1}{16\pi^{2}},
\end{equation}
where $\Delta x^{2}=\int_{-\infty}^{\infty}|(x-\left<x\right>_f)f(x)|^{2}\mathrm{d}x
	$ and $\Delta w^{2}=\int_{-\infty}^{\infty}|(w-\left<w\right>_{\widehat{f}})\widehat{f}(w)|^{2}\mathrm{d}w$ with $\left<x\right>_f=\int_{-\infty}^{\infty}
	x\left|f(x)\right|^{2}\mathrm{d}x$ and $\left<w\right>_{
\widehat{f}
    }=\int_{-\infty}^{\infty}
	w|\widehat{f}(w)|^{2}\mathrm{d}w 
	$.
Here $\widehat{f}$ is the Fourier transform of $f$ defined by
 \begin{equation*}
\widehat{f}(w)=\int_{-\infty}^{\infty} f(x) e^{-2\pi i xw}\mathrm{d}x
 \end{equation*}
 if $f\in L^{1}(\mathbb{R})$. When $f(x)$ is written as $f(x)=\left|f(x)\right|e^{2\pi i \varphi(x)}$ with $\Vert f\Vert_{2}=1$, Cohen in \cite{Cohen} obtains 
a stronger version of HPW uncertainty principle, i.e.,
\begin{equation}\label{SUP}
		\Delta x^{2} \Delta w^{2}\ge \frac{1}{16\pi^{2}}+\mathrm{Cov}_{x,w}^{2},
	\end{equation}
	where  $\mathrm{Cov}_{x,w}=\int_{-\infty}^{\infty}(x-\left<x\right>_f)(\varphi^{\prime}(x)-\left<w\right>_{\widehat{f}})\left|f(x)\right|^2\mathrm{d}x$.
    Correspondingly, there have also been some developments for uncertainty principle of self-adjoint operators 
    $\widehat{A}$ and $\widehat{B}$ on a Hilbert space $\mathcal H$. In \cite{Folland}, the author gives an uncertainty principle for self-adjoint operators as follows,
    \begin{equation}\label{OSOUP}
    \Vert(\widehat{A}-\alpha)f\Vert_{2}^{2}
    \Vert (\widehat{B}-\beta)f\Vert_{2}^{2}\ge \frac{1}{4}|
\langle [\widehat{A},\widehat{B}]f,f\rangle|^{2}, \quad f\in D(\widehat{A}\widehat{B})\cap D(\widehat{B}\widehat{A}), 
    \end{equation}
where $\alpha,\beta\in \mathbb{C}, [\widehat{A},\widehat{B}]\triangleq
\widehat{A}\widehat{B}-\widehat{B}\widehat{A}$, $\left<\cdot,\cdot\right>$ is the inner product with $\Vert\cdot\Vert _2\triangleq \left<\cdot,\cdot\right>^{\frac{1}{2}}$,  $D(\widehat{A}\widehat{B})$ and $ D(\widehat{B}\widehat{A})$ are the domains of the products of $\widehat{A}\widehat{B}$ and $\widehat{B}\widehat{A}$ (see (\ref{domain of AB}), (\ref{domain of BA}) for details).
In \cite{Cohen}, a stronger uncertainty principle for self-adjoint operators is given as follows,
\begin{align}\label{SOUP}
   & \Vert(\widehat{A}-\alpha)f\Vert_{2}^{2}
    \Vert (\widehat{B}-\beta)f\Vert_{2}^{2}\ge \frac{1}{4}|
\langle [\widehat{A},\widehat{B}]f,f\rangle|^{2} + |\langle[\widehat{A}-\alpha\widehat{I},\widehat{B}-\beta \widehat{I}]_+f,f\rangle|^{2} , \notag\\
&\quad f\in D(\widehat{A}\widehat{B})\cap D(\widehat{B}\widehat{A}), 
    \end{align}
where $\widehat{I}$ is the identity operator and $[\widehat{A}-\alpha\widehat{I},\widehat{B}-\beta \widehat{I}]_+ \triangleq (\widehat{A}-\alpha\widehat{I})(\widehat{B}-\beta \widehat{I})+(\widehat{B}-\beta \widehat{I})(\widehat{A}-\alpha\widehat{I}).$ Note that
(\ref{OSOUP}) gives (\ref{OSUP}) and 
(\ref{SOUP}) gives (\ref{SUP}) if $\widehat{A}f(x) = xf(x)$ and $\widehat{B}f(x) = \frac{1}{2
\pi i}\frac{df(x)}{dx}$. Later, in \cite{Dang-Deng-Qian} Dang, Deng and Qian give the so-called extra-strong uncertainty principle, which is strictly stronger than (\ref{SUP}).
 Similarly, the authors in \cite{Dang-Deng-Qian} also prove the extra-strong uncertainty principle for self-adjoint operators under some additional conditions. In \cite{Cowling-Price} the authors give the $L^p$-type HPW uncertainty principle for the classical Fourier transform with $1\leq p\leq 2$, and later, a sharper $L^p$-type HPW uncertainty principle is given in \cite{Zhang}. In fact,  uncertainty principles have been widely developed and studied in mathematics since the HPW uncertainty principle was proposed (see e.g. \cite{Cazacu-Flynn-Lam, Cohen1, Dias-Luef-Prata, Donoho-Stark, Escauriaza-Kenig-Ponce, Goh-Goodman, Jiang-Liu-Wu, Jin-Zhang, Kristály, Narcowich-Ward} and the references therein).

The above developments of HPW uncertainty principle are to pursue sharper lower bounds. In fact, 
in the existing literature, a lot of developments of HPW uncertainty principle are based on generalizations of Fourier transform (including one and several variables), such as the fractional Fourier transform (FRFT), the linear canonical transform (LCT) and so on (see \cite{Dang-Deng-Qian2, Kou-Xu-Zhang, Sharma-Joshi1, Xu-Wang-Xu3, Zhao-Tao-Li-Wang, Zhao-Tao-Wang}). 
Note that the Fourier transform and FRFT are two special cases of LCT (\cite{Chen-Fu-Grafakos-Wu, Namias}). In the case of several variables, the  free metaplectic transformation (FMT) could be considered as a generalization of LCT, which was first studied by Folland (\cite{Folland}).
However, there are relatively a few results for FMT (see e.g. \cite{Chen-Dang-Mai,Dias-deGosson-Prata,Ding-Pei,Zhang1,Zhang2,Zhang3}). Based on the above developments of HPW uncertainty principle, the initial purpose of this paper is to give some strong types of HPW uncertainty principle for FMT. 

When preparing this paper, we note the results given by Dias, de Gosson and Prata in \cite{Dias-deGosson-Prata}, which gives an interesting study of HPW uncertainty principle from a metaplectic perspective. 
Their results are as follows.
\begin{prop}[{\cite[Corollary 6]{Dias-deGosson-Prata}}]\label{MPUP}
  Let $f\in L^2(\mathbb{R}^{N})$ with $\Vert f\Vert_{2}=1$. There holds
 \begin{align}\label{Up for operator original}
        &\int_{\mathbb{R}^{N}}\left|x\left(\widehat{M_{1}}f\right)(x)\right|^{2}\mathrm{d}x
\int_{\mathbb{R}^{N}}\left|\xi\left(\widehat{M_{2}}f\right)(\xi)\right|^{2}\mathrm{d}\xi\notag\\
&\ge\frac{1}{16\pi^{2}}\left(\sum_{j=1}^{N}\left|\left(M_{1}JM_{2}^{T}\right)_{jj}\right|\right)^{2},
    \end{align} 
where $ \widehat{M}_{j}$ are associated metaplectic operators of $M_{j}\in\mathrm{Sp}(2N,\mathbb{R}), j=1,2$, 
$
    J=\begin{pmatrix}
0 & I_{N}\\ 
-I_{N} & 0
\end{pmatrix}
,$ and $I_{N}$ is the identity matrix.
\end{prop}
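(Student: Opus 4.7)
The plan is to reduce the statement to the operator uncertainty principle \eqref{OSOUP} applied componentwise, combined with the unitarity and the intertwining property of the metaplectic representation. Since $\widehat{M}_{1},\widehat{M}_{2}$ are unitary on $L^{2}(\mathbb{R}^{N})$, for each coordinate $j=1,\dots,N$ set
\[
  \widehat{A}_{j}:=\widehat{M}_{1}^{-1}\widehat{x}_{j}\widehat{M}_{1},\qquad
  \widehat{B}_{j}:=\widehat{M}_{2}^{-1}\widehat{x}_{j}\widehat{M}_{2},
\]
where $\widehat{x}_{j}$ denotes multiplication by the $j$-th coordinate. By unitarity, $\|\widehat{x}_{j}\widehat{M}_{1}f\|_{2}=\|\widehat{A}_{j}f\|_{2}$ and $\|\widehat{\xi}_{j}\widehat{M}_{2}f\|_{2}=\|\widehat{B}_{j}f\|_{2}$, so the left-hand side of \eqref{Up for operator original} is
\[
\Bigl(\sum_{j=1}^{N}\|\widehat{A}_{j}f\|_{2}^{2}\Bigr)\Bigl(\sum_{k=1}^{N}\|\widehat{B}_{k}f\|_{2}^{2}\Bigr).
\]

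Next I would apply the Cauchy-Schwarz inequality in the form $(\sum_{j}a_{j}^{2})(\sum_{j}b_{j}^{2})\ge(\sum_{j}a_{j}b_{j})^{2}$ with $a_{j}=\|\widehat{A}_{j}f\|_{2}$, $b_{j}=\|\widehat{B}_{j}f\|_{2}$, and then, for each $j$, invoke \eqref{OSOUP} with $\alpha=\beta=0$ and take square roots:
\[
  \|\widehat{A}_{j}f\|_{2}\,\|\widehat{B}_{j}f\|_{2}\ge \tfrac{1}{2}\bigl|\langle[\widehat{A}_{j},\widehat{B}_{j}]f,f\rangle\bigr|.
\]
Summing over $j$ and squaring reduces the proposition to a computation of the commutators $[\widehat{A}_{j},\widehat{B}_{j}]$.

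The core computation uses the well-known intertwining property of the metaplectic representation: writing $M_{i}=\bigl(\begin{smallmatrix}A_{i}&B_{i}\\C_{i}&D_{i}\end{smallmatrix}\bigr)$, one has $\widehat{A}_{j}=\sum_{k}(A_{1})_{jk}\widehat{x}_{k}+(B_{1})_{jk}\widehat{p}_{k}$ and analogously for $\widehat{B}_{j}$ with $M_{2}$, where $\widehat{p}_{k}=(2\pi i)^{-1}\partial_{x_{k}}$. Using the canonical commutation relations $[\widehat{x}_{k},\widehat{p}_{l}]=\tfrac{i}{2\pi}\delta_{kl}\widehat{I}$ (consistent with the $2\pi$-convention for the Fourier transform used in the paper), a direct expansion yields
\[
  [\widehat{A}_{j},\widehat{B}_{j}]=\tfrac{i}{2\pi}\bigl(A_{1}B_{2}^{T}-B_{1}A_{2}^{T}\bigr)_{jj}\widehat{I},
\]
and a block multiplication of $M_{1}JM_{2}^{T}$ shows that this upper-left $N\times N$ block is exactly $A_{1}B_{2}^{T}-B_{1}A_{2}^{T}$, so the scalar agrees with $(M_{1}JM_{2}^{T})_{jj}$ for $j=1,\dots,N$. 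Using $\|f\|_{2}=1$ then gives $|\langle[\widehat{A}_{j},\widehat{B}_{j}]f,f\rangle|=(2\pi)^{-1}|(M_{1}JM_{2}^{T})_{jj}|$, and combining with the previous two steps produces the factor $\tfrac{1}{16\pi^{2}}(\sum_{j}|(M_{1}JM_{2}^{T})_{jj}|)^{2}$.

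The main obstacle is Step~3: pinning down the correct normalization of the metaplectic intertwiner (there are competing $\pm$ and $2\pi$ conventions in the literature) and verifying that the symplectic bookkeeping $A_{1}B_{2}^{T}-B_{1}A_{2}^{T}=[M_{1}JM_{2}^{T}]_{\text{upper-left}}$ is correct, so that the commutator ends up being a scalar multiple of the identity with precisely the coefficient announced in the statement. A secondary but minor point is to make sure the computation is carried out on a dense domain (for instance Schwartz functions) on which $\widehat{A}_{j}\widehat{B}_{j}$ and $\widehat{B}_{j}\widehat{A}_{j}$ are defined, so that \eqref{OSOUP} applies; the result for general $f\in L^{2}$ then follows by a standard density argument.
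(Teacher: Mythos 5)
Your proof is correct, and it mirrors the route the paper takes for the metaplectic uncertainty principle. Note that the paper does not supply its own proof of Proposition~\ref{MPUP} (it is cited from \cite{Dias-deGosson-Prata}); what it proves instead is the stronger Main Result~IV (Theorem~\ref{corollary 1}) via Lemma~\ref{lemma operators}, and that argument is essentially yours: one forms $\widehat A=\sum_{\alpha}M^{(1)}_{j,\alpha}\widehat Z_\alpha=\widehat{M_1}^*\widehat X_j\widehat{M_1}$ and $\widehat B=\sum_{\beta}M^{(2)}_{k,\beta}\widehat Z_\beta=\widehat{M_2}^*\widehat X_k\widehat{M_2}$, uses unitarity to identify $\Vert\widehat Af\Vert_2^2$ with $\int_{\mathbb{R}^N}|x_j(\widehat{M_1}f)(x)|^2\,\mathrm{d}x$, computes $[\widehat A,\widehat B]=\tfrac{i}{2\pi}(M_1JM_2^T)_{jk}\widehat I$ from \eqref{cov property}, and then sums over $j$ with Cauchy--Schwarz. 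The only substantive difference is the choice of operator inequality: you invoke the Folland bound \eqref{OSOUP}, which retains only the commutator term and gives exactly the coefficient $\tfrac{1}{16\pi^2}$ of Proposition~\ref{MPUP}, whereas the paper invokes the Cohen inequality \eqref{operator inequality} and keeps the additional anticommutator contribution $|(M_1\Sigma M_2^T)_{jk}|^2$, yielding the sharper bound. Your block computation $(M_1JM_2^T)_{jj}=(A_1B_2^T-B_1A_2^T)_{jj}$ for $1\le j\le N$ is correct, and your concluding domain remark is sound (on Schwartz functions all compositions are defined, and for a general $f\in L^2$ the inequality is vacuous whenever the left-hand side is infinite, so nothing more is needed).
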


\begin{prop}[{\cite[Theorem 7]{Dias-deGosson-Prata}}]\label{Ro-ScUP}
Let $f$ be such that $\Vert f\Vert_2=1$\\
\noindent and $\int_{\mathbb R^{2N}} {(1+|z|^2)\left|W_\sigma f(z)\right|} dz<\infty.$ There holds
\begin{equation*}
    \Upsilon+\frac{i}{4\pi}\Omega\ge 0,
\end{equation*}
for $\Upsilon=D_{1,2}\Sigma\left(D_{1,2}\right)^T$,
$\Omega=D_{1,2}J\left(D_{1,2}\right)^T$ with $D_{1,2}=\begin{pmatrix}
    A_1 &B_1\\
    A_2 &B_2
\end{pmatrix},$ where $A_j$ and $B_j$ are real $N\times N$ matrices from  $M_j\in\mathrm{Sp}(2N,\mathbb{R})$ with $M_j=\begin{pmatrix}
    A_j& B_j\\
    C_j&D_j
\end{pmatrix}, j=1,2.$
\end{prop}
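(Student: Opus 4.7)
The plan is to recognize Proposition \ref{Ro-ScUP} as a congruence-transformed version of the classical Robertson--Schr\"odinger matrix uncertainty principle, which in the notation of \cite{Dias-deGosson-Prata} reads
\begin{equation*}
    \Sigma + \frac{i}{4\pi} J \geq 0
\end{equation*}
as an inequality between Hermitian $2N\times 2N$ matrices, where $\Sigma$ is the covariance matrix built from the second moments of the Wigner distribution $W_\sigma f$. The assumption $\int_{\mathbb{R}^{2N}} (1+|z|^2)|W_\sigma f(z)|\,dz <\infty$ is imposed precisely so that every entry of $\Sigma$ is finite and this matrix inequality is meaningful. So the first step is to invoke this fundamental inequality (whose proof, going back to Narcowich and de Gosson, uses positivity of the density operator $|f\rangle\langle f|$ combined with symplectic diagonalization via Williamson's theorem).

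The second step is purely algebraic. The cone of positive semi-definite Hermitian matrices is closed under congruence by real matrices: if $H=H^{*}\geq 0$ and $K$ is a real matrix of compatible size, then $KHK^{T}$ is again Hermitian and positive semi-definite, because for every vector $v$ one has $v^{*}KHK^{T}v = (K^{T}v)^{*} H (K^{T}v)\geq 0$. Applying this with $K=D_{1,2}$ and $H=\Sigma+\frac{i}{4\pi}J$, and using linearity of the congruence map and the fact that $D_{1,2}$ is real so that $(D_{1,2})^{*}=(D_{1,2})^{T}$, gives
\begin{equation*}
    D_{1,2}\Sigma (D_{1,2})^{T} + \frac{i}{4\pi} D_{1,2} J (D_{1,2})^{T} \;=\; \Upsilon + \frac{i}{4\pi}\Omega \;\geq\; 0,
\end{equation*}
which is exactly the desired conclusion.

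The main obstacle, and the reason the statement is more than a triviality, lies not in the congruence step but in setting up the first step correctly: one must verify that the composite block matrix $D_{1,2}$, formed by stacking the $(A_j,B_j)$ blocks of two independent symplectic matrices $M_1,M_2\in\mathrm{Sp}(2N,\mathbb{R})$, is the right object to encode the joint uncertainty of the pair of metaplectic observables $\widehat{M}_1,\widehat{M}_2$. Concretely, one should check that the entries of $\Upsilon$ reproduce the cross-variances $\langle (\widehat{M}_j^{-1}x_k\widehat{M}_j) f,(\widehat{M}_{j'}^{-1}x_{k'}\widehat{M}_{j'})f\rangle$ up to the usual symmetrization, and that the entries of $\Omega$ reproduce the corresponding commutators $[\widehat{M}_j^{-1}x_k\widehat{M}_j,\widehat{M}_{j'}^{-1}x_{k'}\widehat{M}_{j'}]$; this is the symplectic covariance of the Wigner distribution applied twice. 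Once this identification is in hand, the proposition reduces to the congruence remark above, and no further analytic estimates are required beyond the integrability hypothesis already built into the statement.
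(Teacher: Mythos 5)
The paper you are working from does not prove this proposition; it is quoted verbatim from \cite[Theorem 7]{Dias-deGosson-Prata}, so there is no in-paper proof to compare against. That said, your argument is correct and is indeed the standard (and essentially the only natural) derivation: observe that $\Upsilon+\frac{i}{4\pi}\Omega = D_{1,2}\left(\Sigma+\frac{i}{4\pi}J\right)D_{1,2}^{T}$ and appeal to the Robertson--Schr\"odinger matrix inequality $\Sigma+\frac{i}{4\pi}J\ge 0$ together with the fact that congruence by a real matrix preserves positive semi-definiteness of a Hermitian matrix.

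One minor comment on your last paragraph. For the \emph{proof} of the stated matrix inequality, no identification of the entries of $\Upsilon$ and $\Omega$ with metaplectic cross-variances or commutators is needed: $\Upsilon$ and $\Omega$ are defined outright as congruences of $\Sigma$ and $J$ by $D_{1,2}$, so once the base inequality is in hand, the congruence step closes the argument. The identification you describe is what makes the inequality \emph{useful} (it is precisely what lets the present paper recast it as the lower bounds in Theorems \ref{Up for MO component} and \ref{UP for L2 M1 M2}, via the block form (\ref{Sigma}) of $\Sigma$ and the computation of the $(j,j)$-entries of $D_{1,2}(\Sigma+\frac{i}{4\pi}J)D_{1,2}^{T}$), but it is an interpretation rather than a gap in the proof. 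So your framing of this as the ``main obstacle'' to the proof is a slight overstatement; it is the main point of the \emph{application}, not of the derivation of the positive semi-definiteness.
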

\noindent Here $\Sigma=\left(\Sigma_{\alpha,\beta}\right)$ is the covariance matrix with
\begin{align}\label{SIGMA}
\begin{split}
\Sigma_{\alpha,\beta}=\Big\langle \left(\frac{\widehat{Z}_{\alpha}\widehat{Z}_{\beta}+\widehat{Z}_{\beta}\widehat{Z}_{\alpha}}{2}
\right)f,f\Big\rangle
=\int_{\mathbb{R}^{2N}}z_{\alpha}z_{\beta}W_{\sigma}f(z)dz,\quad \alpha, \beta=1,\cdots,2N,
\end{split}
\end{align}
where the operators $\widehat{Z}_{\alpha}$ are defined in (\ref{fundamental operator}) and 
$W_{\sigma}f(z)$ is the Wigner function of $f\in L^2(\mathbb{R}^{N})$ for $z=(x,w)\in \mathbb{R}^{2N}$ (see \S \ref{section 2} for its definition).

Their results widely generalize the classical HPW uncertainty principle to many integral transformations other than the Fourier transform. In particular, 
Proposition \ref{MPUP} is corresponding to the classical HPW uncertainty principle for metaplectic operators,
while Proposition \ref{Ro-ScUP} is the analogue of Robertson-Schr$\rm \ddot{o}$dinger uncertainty principle for metaplectic operators, which implies a stronger version of HPW uncertainty principle for metaplectic operators.

 Since FMTs are special metaplectic operators, by specific and nontrivial computations, it turns out that the results of our paper are closely related to Propositions \ref{MPUP} and \ref{Ro-ScUP}. More specifically, a part of our main theorems actually give the same results as those implied by Proposition \ref{Ro-ScUP}. Since this connection between our results and Proposition \ref{Ro-ScUP} is not obvious, and  not shown in existing works, our paper gives direct and completely different proofs of those results and the mentioned connection. Nevertheless, to the authors' knowledge, the results presented in this paper should be new and not exactly given in the literature.  

In the rest of this paper, we always assume  $f\in L^{2}(\mathbb{R}^{N})$ with $\Vert f\Vert_{2}=1$. When $f\in L^{2}(\mathbb{R}^{N})$ is expressed in the form $f(x)=\left|f(x)\right|e^{2\pi i \varphi(x)}$, we assume that for any $1\le j\le N$, the classical partial derivatives $\frac{\partial \left|f\right|}{\partial x_{j}}, \frac{\partial \varphi}{\partial x_{j}}$ and $\frac{\partial f}{\partial x_{j}}$ exist for all $x\in \mathbb{R}^{N}$. Let $\mathcal{L}_{M_{j}}[f](u)$ be the FMT of $f$ with 
 $M_{j}=\begin{pmatrix}
A_{j} & B_{j}\\ 
C_{j} & D_{j}
\end{pmatrix} \in \mathrm{Sp}(2N,\mathbb{R}), j=1,2$ (see \S \ref{section 2} for its definition). Our main results are given as follows.
\smallskip

\noindent \textbf{Main Result I:} (Theorem \ref{Up for MO component})
{\it Let $f(x)=\left|f(x)\right|e^{2\pi i\varphi(x)}, xf(x)$ and $w\widehat{f}(w)\in L^{2}(\mathbb{R}^{N})$. There holds
    \begin{align}\label{main result 2}
&\int_{\mathbb{R}^{N}}\left|u\mathcal{L}_{M_{1}}[f](u)\right|^{2}\mathrm{d}u
\int_{\mathbb{R}^{N}}\left|u\mathcal{L}_{M_{2}}[f](u)\right|^{2}\mathrm{d}u\notag\\
&\ge\biggr[\sum_{j=1}^{N}\biggr(\frac{1}{16\pi^{2}}\left|\left(A_{1}B_{2}^{T}-B_{1}A_{2}^{T}\right)_{jj}\right|^{2}+\Bigr|\Bigr(A_{1}XA_{2}^{T}+B_{1}WB_{2}^{T}\notag\\
&\quad+A_{1}\mathrm{Cov}_{X,W}B_{2}^{T}+B_{1}\left(\mathrm{Cov}_{X,W}\right)^{T}A_{2}^{T}\Big)_{jj}
\Big|^{2}
\bigg)^{\frac{1}{2}}
\bigg]^{2},
\end{align}
where $X, W$ and $\mathrm{Cov}_{X,W}$ are given in Definition \ref{def-jihao}. }

The result of \textbf{Main Result I} is essentially based on estimating the product
$
    \int_{\mathbb{R}^{N}}\left|u_j\mathcal{L}_{M_{1}}[f](u)\right|^{2}\mathrm{d}u
\int_{\mathbb{R}^{N}}\left|u_j\mathcal{L}_{M_{2}}[f](u)\right|^{2}\mathrm{d}u
$
for each $j\in \{1,2,...,N\},$ while the  \textbf{Main Result II}  deals with the product $\int_{\mathbb{R}^{N}}\left|u\mathcal{L}_{M_{1}}[f](u)\right|^{2}\mathrm{d}u
\int_{\mathbb{R}^{N}}\left|u\mathcal{L}_{M_{2}}[f](u)\right|^{2}\mathrm{d}u$. 

\smallskip
\noindent \textbf{Main Result II:} (Theorem \ref{UP for L2 M1 M2})
{\it Let $f(x)=\left|f(x)\right|e^{2\pi i \varphi(x)}, xf(x)$ and $ w\widehat{f}(w)\in L^{2}(\mathbb{R}^{N})$. There holds
     \begin{align}\label{main result 3}
&\int_{\mathbb{R}^{N}}\left|u\mathcal{L}_{M_{1}}[f](u)\right|^{2}\mathrm{d}u\int_{\mathbb{R}^{N}}\left|u\mathcal{L}_{M_{2}}[f](u)\right|^{2}\mathrm{d}u\notag\\
&\ge\frac{\left[\mathrm{tr}\left(A_{1}^{T}B_{2}-A_{2}^{T}B_{1}\right)\right]^{2}}{16\pi^{2}}+\biggr[\int_{\mathbb{R}^{N}}x^{T}A_{2}^{T}A_{1}
x\left|f(x)\right|^{2}\mathrm{d}x\notag\\
&+ \int_{\mathbb{R}^{N}}w^{T}B_{1}^{T}B_{2}
w\left|\widehat{f}(w)\right|^{2}\mathrm{d}w+ \int_{\mathbb{R}^{N}}x^{T}\left(A_{1}^{T}B_{2}+A_{2}^{T}B_{1}
\right)\nabla \varphi(x)\left|f(x)\right|^{2}\mathrm{d}x
\bigg]^{2},\notag\\
\end{align}
where 
$\mathrm{tr}(\cdot)$ denotes the trace of a matrix, and $\nabla=\left(\frac{\partial }{\partial x_{1}}, \dots,\frac{\partial }{\partial x_{N}}\right)^{T}$.}

We note that when $n=1$ \textbf{Main Result II} coincides with \textbf{Main Result I}. The best result of HPW uncertainty principle for LCT in the one dimensional case is given in \cite{Dang-Deng-Qian2}, which is stronger than that of \textbf{Main Result II}. For higher dimensional cases, analogous results of that in \cite{Dang-Deng-Qian2} can only be obtained for special matrices $M$ (see e.g. \cite{Zhang1, Zhang2, Zhang3} and also \S \ref{section 5}).  In comparison, the right side of (\ref{main result 3}) is determined by entire matrices, while that of (\ref{main result 2}) is expressed in terms of components of matrices. 

Although in \S \ref{section 3} we can show that the right side of (\ref{main result 2}) is bigger than that of (\ref{main result 3}), the method in proving \textbf{Main Result II} is more general and can be used to obtain the sharper $L^p$-type HPW uncertainty principle for FMTs with $1\leq p\leq 2,$ that is \textbf{Main Result III}.

\smallskip
\noindent \textbf{Main Result III:} (Theorem \ref{UP for Lp-FMT})
{\it Let $f(x)=\left|f(x)\right|e^{2\pi i\varphi(x)}, xf(x)$ and $w\widehat{f}(w)\in L^{2}(\mathbb{R}^{N})$. If 
$u\mathcal{L}_{M_{1}}[f](u)$ and $ u\mathcal{L}_{M_{2}}[f](u) \in L^{p}(\mathbb{R}^{N})$ with $1\le p\le 2, \frac{1}{p}+\frac{1}{q}=1$, there holds
    \begin{align*}
&\left(\int_{\mathbb{R}^{2}}\left|u\mathcal{L}_{M_{1}}[f](u)\right|^{p}\mathrm{d}u\right)^{\frac{2}{p}}\left(\int_{\mathbb{R}^{N}}\left|u\mathcal{L}_{M_{2}}[f](u)\right|^{p}\mathrm{d}u\right)^{\frac{2}{p}}\notag\\
\ge&\left|\mathrm{det}\left(B_{2}A_{1}^{T}-A_{2}B_{1}^{T}\right)\right|^{\frac{2}{p}-1}\bigg[\frac{\left[\mathrm{tr}\left(A_{1}^{T}B_{2}-A_{2}^{T}B_{1}\right)\right]^{2}}{16\pi^{2}}+\bigg(
\int_{\mathbb{R}^{N}}x^{T}A_{2}^{T}A_{1}
x\left|f(x)\right|^{2}\mathrm{d}x\notag\\
&+ \int_{\mathbb{R}^{N}}w^{T}B_{1}^{T}B_{2}
w\left|\widehat{f}(w)\right|^{2}\mathrm{d}w+ \int_{\mathbb{R}^{N}}x^{T}\left(A_{1}^{T}B_{2}+A_{2}^{T}B_{1}
\right)\nabla \varphi(x)\left|f(x)\right|^2\mathrm{d}x
\bigg)^{2}\bigg].
\end{align*}}
\ 

In \S \ref{section 5} we obtain uncertainty principles for metaplectic operators.

\smallskip
\noindent \textbf{Main Result IV:} (Theorem \ref{corollary 1})
{\it Let $ \int_{\mathbb R^{2N}} {(1+|z|^2)\left|W_\sigma f(z)\right|} dz<\infty$. There holds 
     \begin{align*}
    &\int_{\mathbb{R}^{N}}\left|x\left(\widehat{M_{1}}f\right)(x)\right|^{2}\mathrm{d}x
\int_{\mathbb{R}^{N}}\left|\xi\left(\widehat{M_{2}}f\right)(\xi)\right|^{2}\mathrm{d}\xi\notag\\
&\ge\left[\sum_{j=1}^{N}\left(\frac{1}{16\pi^{2}}\left|\left(M_{1}JM_{2}^{T}\right)_{jj}\right|^{2}+\left|\left(M_{1}\Sigma M_{2}^{T}\right)_{jj}
\right|^{2}
\right)^{\frac{1}{2}}
\right]^{2}.
    \end{align*}}

\noindent \textbf{Main Result V:} (Theorem \ref{Up for MO})
{\it Let $ \int_{\mathbb R^{2N}} {(1+|z|^2)\left|W_\sigma f(z)\right|} dz<\infty$. There holds 
      \begin{align*}
&\int_{\mathbb{R}^{N}}\left|x\left(\widehat{M_{1}}f\right)(x)\right|^{2}\mathrm{d}x
\int_{\mathbb{R}^{N}}\left|\xi\left(\widehat{M_{2}}f\right)(\xi)\right|^{2}\mathrm{d}\xi\notag\\
&\ge\frac{1}{16\pi^2}\left[\sum_{j=1}^{N}\left(M_{1}JM_{2}^{T}\right)_{jj}\right]^{2}+\left[\sum_{j=1}^{N}\left(M_{1}\Sigma M_{2}^{T}\right)_{jj}\right]^{2}.
    \end{align*}}
 
\textbf{Main Result IV} gives a stronger version of Proposition \ref{MPUP}. 
In particular, when $\widehat M_1$ and $\widehat M_2$ are FMTs, \textbf{Main Result IV} and \textbf{Main Result V} coincide with \textbf{Main Result I} and \textbf{Main Result II}, respectively. 

The paper is organized as follows. In \S \ref{section 2}, we introduce the basic properties of symplectic matrices, the metaplectic group and the Weyl operator. In \S \ref{section 3}, we directly prove the main results of this paper. In \S \ref{section 4}, some sharper $L^{p}$-type HPW uncertainty principles with $1\le p\le 2$ are proved. In \S \ref{section 5}, we prove the main results from the point of view of metapletic operators.

\section{Preliminaries}\label{section 2}
\subsection{Symplectic geometry}
Let $\mathbb{R}^{2N}=\mathbb{R}^{N}\oplus \mathbb{R}^{N}$.
A bilinear form on $\mathbb{R}^{2N}$ is called a ``symplectic form'' if it is skew-symmetric and non-degenerate.
The standard symplectic form on $\mathbb{R}^{2N}$ is defined by
\begin{equation*}
    \sigma(z,z^{\prime})=z\cdot J^{-1}z^{\prime}=w \cdot x^{\prime}-x \cdot w^{\prime},
\end{equation*}
where 
\begin{equation*}
    J=\begin{pmatrix}
0 & I_{N}\\ 
-I_{N} & 0
\end{pmatrix}
\end{equation*}
is the standard symplectic matrix, $z=(x,w)$ and $ z^{\prime}=(x^{\prime},w^{\prime})\in \mathbb{R}^{2N}.$ Note that $J^{-1}=J^{T}=-J,$ where $J^{T}$ is the transpose of
$J$.
The space $\mathbb{R}^{2N}$ endowed with the symplectic form $\sigma$ is named the standard symplectic space, which is denoted by $(\mathbb{R}^{2N},\sigma)$. 

The symplectic group $\mathrm{Sp}(2N,\mathbb{R})$ is the set 
of all linear automorphisms $m$ of $\mathbb{R}^{2N}$ such that
\begin{equation}\label{form}
    \sigma(m(z),m(z^{\prime}))=\sigma(z,z')
    \end{equation}
for $z,z'\in \mathbb{R}^{2N}$. 
We refer to the matrix $M$ of $m$ in the canonical basis of $\mathbb{R}^{2N}$ as the symplectic transformation,
\begin{equation*}
    m(z)=Mz.
\end{equation*}
According to (\ref{form}), one has that
\begin{equation}\label{symplectic 1}
  M^{T}JM=J.
\end{equation}
Using (\ref{symplectic 1}), 
we can also have that
\begin{equation*}
     MJM^{T}=J,
\end{equation*}
which means that $M^{T}\in \mathrm{Sp}(2N,\mathbb{R})$. It follows that 
\begin{equation}\label{symplectic 3}
    M\in \mathrm{Sp}(2N,\mathbb{R})\iff M^{T}JM=J \iff MJM^{T}=J.
\end{equation}

If we write a matrix $M\in \mathrm{Sp}(2N,\mathbb{R})$ in block-matrix form
\begin{equation*}
    M=\begin{pmatrix}
A & B\\ 
C & D
\end{pmatrix},
\end{equation*}
where $A,B,C$ and $D$ are real $N\times N$ matrices. Then we have that (\ref{symplectic 3}) is equivalent to the following conditions
\begin{equation}\label{conditions 1}
    A^{T}C=C^{T}A, \quad B^{T}D=D^{T}B,\quad  A^{T}D-C^{T}B=I_{N},
\end{equation}
and
\begin{equation}\label{conditions 2}
   AB^{T}=BA^{T},\quad  CD^{T}=DC^{T},\quad AD^{T}-BC^{T}=I_{N}.
\end{equation}
If the matrix $B$ is invertible, the matrix $M$ is said to be a free symplectic matrix.
To each free symplectic matrix $M_{W}$, it is associated a generating function
\begin{equation*}
    W(x,x')=\frac{1}{2}x^{T}DB^{-1}x-x^{T}B^{-1}x^{\prime}+\frac{1}{2}\left(x^{\prime}\right)^{T}B^{-1}Ax^{\prime},
\end{equation*}
which is a quadratic form. From the second equality in (\ref{conditions 1}) and the first equality in (\ref{conditions 2}), we have that
\begin{equation}\label{conditions 3}
    DB^{-1}=B^{-T}D^{T},
\end{equation}and
\begin{equation}\label{conditions 4}
    B^{-1}A=A^{T}B^{-T}.
\end{equation}
One essential property of free symplectic matrices is that they generate the symplectic group $\mathrm{Sp}(2N,\mathbb{R})$. More precisely,
every $M\in \mathrm{Sp}(2N,\mathbb{R})$ can be represented
as the product $M=M_{W}M_{W'}$, where $M_{W}$ and $M_{W'}$ are two free symplectic matrices.
\subsection{The metaplectic group}
The metaplectic group $\mathrm{Mp}(2N,\mathbb{R})$ is a double cover of the symplectic group. To each $M \in \mathrm{Sp}(2N,\mathbb{R})$, we can associate two unitary operators $\widehat{M}, -\widehat{M}\in \mathrm{Mp}(2N,\mathbb{R})$. The elements of $\mathrm{Mp}(2N,\mathbb{R})$ are known as \lq\lq metaplectic operators”.

Particularly, to every free symplectic matrix $M_{W}$, we can associate two operators, which are given by
\begin{equation}\label{M OPERATOR}
    \widehat{M}_{W,n}f(x)=\frac{i^{n-N/2}}{\sqrt{\left|\mathrm{det}(B)\right|}}\int_{\mathbb{R}^{N}}e^{2\pi i W(x,x^{\prime})}f(x^{\prime})\mathrm{d}x^{\prime},
\end{equation}
for $f\in S(\mathbb{R}^{N})$ (the Schwartz space), where $n=0$ $\mathrm{mod}$ 2 if $\mathrm{det}(B)>0$ and $n=1$ $\mathrm{mod}$ 2 if $\mathrm{det}(B)<0$.

It is well known that these operators can be generalized to unitary operators on $L^{2}(\mathbb{R}^{N})$, and each $\widehat{M}\in \mathrm{Mp}(2N,\mathbb{R})$ can be expressed as a product of $\widehat{M}_{W,n}\widehat{M}_{W^{\prime},n^{\prime}}$ (see Leray \cite{Leray}, de Gosson \cite{deGosson}).
The inverse of the operators $\widehat{M}_{W,n}$ is defined by $\widehat{M}_{W,n}^{-1}=\widehat{M}_{W,n}^{*}=\widehat{M}_{W^{*}, n^{*}}$, where $W^{*}(x,x^{\prime})=-W(x^{\prime},x)$ and $n^{*}=N-n$.

\subsection{Weyl quantization on \texorpdfstring{$(\mathbb{R}^{2N}, \sigma)$}{standard symplectic space}}
In this subsection, we recall some properties of Weyl operator (see \cite{deGosson}).
\begin{definition}
    For $f\in L^{1}(\mathbb{R}^{2N})\bigcap L^{2}(\mathbb{R}^{2N})$, the symplectic Fourier transform is defined by
\begin{equation*}
    ({\mathcal F}_{\sigma}f)(\zeta)=\int_{\mathbb{R}^{2N}}f(z)e^{-2\pi i \sigma(\zeta,z)}\mathrm{d}z.
\end{equation*}
\end{definition}
\noindent
Clearly, the symplectic Fourier transform and Fourier transform are related by the formula,
\begin{equation*}
    ({\mathcal F}_{\sigma}f)(\zeta)=\mathcal Ff(J\zeta),
\end{equation*}
where $\mathcal Ff$ is the Fourier transform of $f.$
\begin{definition}
    Let $a^{\sigma}\in S^{\prime}(\mathbb{R}^{2N})$. The Weyl operator with symbol $a^{\sigma}$ is defined as
\begin{equation*}
\widehat{A}\coloneqq 
 \int_{\mathbb{R}^{2N}}({\mathcal F}_\sigma a^{\sigma})(z_{0}){\widehat{T}}^{\sigma}(z_{0})\mathrm{d}z_{0},
\end{equation*}
where 
\begin{equation*}
({\widehat{T}}^{\sigma}(z_{0})\phi)(x)=e^{2\pi i w_{0}\cdot \left(x-\frac{x_{0}}{2}\right)}\phi(x-x_{0})
\end{equation*}
for $z_{0}=(x_{0},w_{0})\in \mathbb{R}^{2N}$ and $\phi\in S(\mathbb{R}^{N})$.
\end{definition}
\noindent  The correspondence between a symbol $a^{\sigma}\in S^{\prime}(\mathbb{R}^{2N})$ and the Weyl operator it defines is called the Weyl correspondence, which can be written as $\widehat{A}\overset{\text{Weyl}}{\longleftrightarrow }a^{\sigma}$ or $a^{\sigma}\overset{\text{Weyl}}{\longleftrightarrow }\widehat{A}$. 
It is well known that the operator $\widehat{A}$ is formally self-adjoint if and only if the symbol $a^{\sigma}$ is real. 

The fundamental operators in Weyl quantization are given as follows,
\begin{align}\label{fundamental operator}
    \left\{
    \begin{aligned}
        &\left(\widehat{X}_{j}f\right)(x)=x_{j}f(x), \quad  &j=1, \dots,N,\\
        & \left(\widehat{P}_{j}f\right)(x)=\frac{1}{2\pi i}\frac{\partial f(x)}{\partial x_{j}}, \quad  &j=1, \dots,N.
    \end{aligned}\right.
\end{align}
In quantum mechanics $\widehat{X}_{j}$ is explained as
the $j$-th component of the position of a particle and $\widehat{P}_{j}$ is explained as the $j$-th component of its momentum. Let $\widehat{Z}=\left(\widehat{X},\widehat{P}\right)$ with $\widehat{Z}_{j}=\widehat{X}_{j}, \widehat{Z}_{N+j}=\widehat{P}_{j}, j=1, \dots,N$. Then the following commutation relations is satisfied, 
\begin{equation}\label{cov property}
    \left[\widehat{Z}_{\alpha}, \widehat{Z}_{\beta}\right]=\frac{i}{2\pi} J_{\alpha,\beta}\widehat{I}, \quad 1\le \alpha,\beta\le 2N,
\end{equation}
where $\left[\widehat{Z}_{\alpha},\widehat{Z}_{\beta}\right]\triangleq
\widehat{Z}_{\alpha}\widehat{Z}_{\beta}-\widehat{Z}_{\beta}\widehat{Z}_{\alpha}$ and 
$J_{\alpha,\beta}$ are the entries of the standard symplectic matrix $J$.

The Weyl operators have the following symplectic covariance property (see \cite{deGosson}, \cite{Folland}). 
\begin{prop}\label{prop 1}
Let $M\in\mathrm{Sp}(2N,\mathbb{R})$ and $\widehat{M}\in \mathrm{Mp}(2N,\mathbb{R})$ be any of the two metaplectic operators that project onto $M$. For each Weyl operator $\widehat{A}\overset{\text{Weyl}}{\longleftrightarrow }a^{\sigma}$, we have the following correspondence
\begin{equation*}
    a^{\sigma}\circ M \overset{\text{Weyl}} {\longleftrightarrow }{\widehat{M}}^{*}\widehat{A}\widehat{M}.
\end{equation*}
That is, the symbol $a^{\sigma}_{M}(z)=a^{\sigma}(Mz)$ corresponds the Weyl operator ${\widehat{M}}^{*}\widehat{A}\widehat{M}$.
\end{prop}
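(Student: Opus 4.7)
The plan is to reduce the statement to the intertwining identity
\[
\widehat{M}\,\widehat{T}^{\sigma}(z_{0})\,\widehat{M}^{*}=\widehat{T}^{\sigma}(Mz_{0}),\qquad z_{0}\in\mathbb{R}^{2N},
\]
which is the Heisenberg--Weyl covariance of metaplectic operators, and then feed it into the Weyl representation
\[
\widehat{A}=\int_{\mathbb{R}^{2N}}(\mathcal{F}_{\sigma}a^{\sigma})(z_{0})\,\widehat{T}^{\sigma}(z_{0})\,\mathrm{d}z_{0}.
\]
Granting the intertwining relation, conjugating the Weyl representation gives
\[
\widehat{M}^{*}\widehat{A}\widehat{M}=\int_{\mathbb{R}^{2N}}(\mathcal{F}_{\sigma}a^{\sigma})(z_{0})\,\widehat{T}^{\sigma}(M^{-1}z_{0})\,\mathrm{d}z_{0},
\]
and after the change of variables $z_{0}=Mz$ (using $|\det M|=1$, since $M\in\mathrm{Sp}(2N,\mathbb{R})$) this becomes
\[
\widehat{M}^{*}\widehat{A}\widehat{M}=\int_{\mathbb{R}^{2N}}(\mathcal{F}_{\sigma}a^{\sigma})(Mz)\,\widehat{T}^{\sigma}(z)\,\mathrm{d}z.
\]

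The second ingredient is to identify $(\mathcal{F}_{\sigma}a^{\sigma})(Mz)$ with $\mathcal{F}_{\sigma}(a^{\sigma}\circ M)(z)$. This is immediate from the symplectic invariance $\sigma(M\zeta,Mz)=\sigma(\zeta,z)$: writing out $\mathcal{F}_{\sigma}(a^{\sigma}\circ M)(z)$ and changing variable $Mz'\mapsto z'$ inside the integral yields exactly $(\mathcal{F}_{\sigma}a^{\sigma})(Mz)$. Combining the two displays identifies $\widehat{M}^{*}\widehat{A}\widehat{M}$ as the Weyl operator with symbol $a^{\sigma}\circ M$, which is the claim.

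The main obstacle is therefore the intertwining relation itself. I would prove it by exploiting the generation result quoted in \S\ref{section 2}: every $M\in\mathrm{Sp}(2N,\mathbb{R})$ factors as $M_{W}M_{W'}$ with $M_{W},M_{W'}$ free symplectic, so it suffices to verify covariance for operators of the form $\widehat{M}_{W,n}$ given by (\ref{M OPERATOR}). For such an operator, one applies $\widehat{M}_{W,n}\widehat{T}^{\sigma}(z_{0})\widehat{M}_{W,n}^{*}$ to a Schwartz function and computes the resulting kernel directly; the phase factor $e^{2\pi i W(x,x')}$ together with a translation by $x_{0}$ and modulation by $w_{0}$ in the argument of $f$ rearranges (using the quadratic structure of $W$ and the block identities (\ref{conditions 1})--(\ref{conditions 2})) into the kernel of $\widehat{T}^{\sigma}(Mz_{0})\widehat{M}_{W,n}\widehat{M}_{W,n}^{*}=\widehat{T}^{\sigma}(Mz_{0})$. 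The computation is routine but bookkeeping-heavy; the sign ambiguity inherent to the metaplectic double cover is harmless here because the intertwining identity is quadratic in $\widehat{M}$.

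Finally, a density remark: the argument above is carried out for $a^{\sigma}\in\mathcal{S}(\mathbb{R}^{2N})$ acting on Schwartz vectors, where all integrals converge absolutely. Extension to $a^{\sigma}\in\mathcal{S}'(\mathbb{R}^{2N})$ follows by duality, using that $\widehat{M}$ and $\widehat{M}^{*}$ map $\mathcal{S}(\mathbb{R}^{N})$ continuously onto itself and that the Weyl correspondence is a bijection $\mathcal{S}'(\mathbb{R}^{2N})\to\mathcal{L}(\mathcal{S}(\mathbb{R}^{N}),\mathcal{S}'(\mathbb{R}^{N}))$.
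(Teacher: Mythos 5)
The paper does not supply its own proof of Proposition~\ref{prop 1}; it cites de~Gosson and Folland, and the standard argument in those references is exactly the one you reconstruct: reduce the symbol-level covariance to the Heisenberg--Weyl intertwining $\widehat{M}\widehat{T}^{\sigma}(z_{0})\widehat{M}^{*}=\widehat{T}^{\sigma}(Mz_{0})$, insert it into the Bochner-type Weyl representation, change variables using $|\det M|=1$, and transfer $M$ to the symbol via the symplectic invariance of $\sigma$ inside $\mathcal F_\sigma$, with the intertwining identity itself established on the free generators $\widehat M_{W,n}$ and the sign ambiguity of the double cover cancelling because $\widehat M$ and $\widehat M^{*}$ appear together. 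Your steps are all correct, the change of variables and the computation $\mathcal F_\sigma(a^{\sigma}\circ M)(z)=(\mathcal F_\sigma a^{\sigma})(Mz)$ check out, and the closing density remark for $a^{\sigma}\in S'(\mathbb R^{2N})$ is the right way to pass from Schwartz symbols to the general case; this is essentially the textbook proof the paper points to.
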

\noindent Using Proposition \ref{prop 1}, we have that 
\begin{equation*}
\widehat{M}^{*}\widehat{Z}_{\alpha}\widehat{M}=\sum_{\beta=1}^{2N}M_{\alpha, \beta}\widehat{Z}_{\beta}, \quad \alpha=1, \dots,2N,
\end{equation*}
where $M_{\alpha,\beta}$ are the entries of the symplectic matrix $M$. 

The Weyl symbol $a^\sigma$ of the operator $\widehat{A}$ 
and its distributional kernel $K_{\widehat{A}}\in S^{\prime}\left(\mathbb{R}^N \times \mathbb{R}^{N}\right)$ are related by the following formulas, 
\begin{equation}\label{symbol and kernel}
    a^\sigma (x,w)=\int_{\mathbb{R}^{N}}K_{\widehat{A}}\left(x+\frac{y}{2},x-\frac{y}{2}\right)e^{-2\pi iw\cdot y}\mathrm{d}y,
\end{equation}
\begin{equation*}
    K_{\widehat{A}}(x,y)=\int_{\mathbb{R}^{N}}a^\sigma\left(\frac{x+y}{2},w\right)e^{2\pi i w \cdot (x-y)}\mathrm{d}w.
    \end{equation*}
    Let $K_{f,g}(x,y)=\left(f\otimes \overline{g}\right)=f(x)\overline{g(y)}$. By (\ref{symbol and kernel}), the associated Weyl symbol is 
    \begin{equation*}
        W_{\sigma}(f,g)(x,w)=\int_{\mathbb{R}^{N}}f\left(x+\frac{y}{2}\right)\overline{g\left(x-\frac{y}{2}\right)}e^{-2\pi i w \cdot y}\mathrm{d}y,
    \end{equation*}
which is known as the cross-Wigner function. If $f=g$, we simply write the Wigner function $W_\sigma (f,f)$ as $W_{\sigma}f$,
\begin{equation*}
    W_{\sigma}f(x,w)=\int_{\mathbb{R}^{N}}f\left(x+\frac{y}{2}\right)\overline{f\left(x-\frac{y}{2}\right)}e^{-2\pi i w \cdot y}\mathrm{d}y.
\end{equation*}
\subsection{Free metaplectic transformation}

\begin{definition}[\cite{Folland}]\label{MLCT}
For any matrix $M=\begin{pmatrix}
A&B\\ 
C&D
\end{pmatrix}\in \mathrm{Sp}(2N, \mathbb{R})$ with $\mathrm{det}(B)\neq 0$, the free metaplectic transformation (FMT) of a function $f\in L^{1}(\mathbb{R}^{N})$ is defined by
   \begin{align}\label{FMT}
    \mathcal{L}_M[f](u)
    =&\frac{1}{i^{\frac{N}{2}}\sqrt{\mathrm{det}{(B)}}}\int_{\mathbb{R}^{N}}f(x)e^{\pi i (u^{T}DB^{-1}u+x^{T}B^{-1}Ax)-2\pi i x^{T}B^{-1}u}\mathrm{d}x,
\end{align}
where $u=(u_{1}, \dots,u_{N})^{T}$, and the real $N\times N$ matrices $A, B, C$ and $D$ satisfy  (\ref{conditions 1}) and 
 (\ref{conditions 2}). 
If $\mathcal{L}_M[f](u)\in L^{1}(\mathbb{R}^{N})$,
the inverse transform is given by $f(x)=\mathcal{L}_{M^{-1}}[\mathcal{L}_M[f]](x)$, where $M^{-1}=\begin{pmatrix}
D^{T} & -B^{T}\\ 
-C^{T} & A^{T}
\end{pmatrix}$.
\end{definition}
By comparing (\ref{M OPERATOR}) and (\ref{FMT}), we can conclude that when $n$ in (\ref{M OPERATOR}) takes some specific values, we have
\begin{equation*}
    \left(\widehat{M}_{W,n}f\right)(u)= \mathcal{L}_{M_{W}}[f](u).
\end{equation*}
 \noindent From Definition \ref{MLCT}, one has the following relationship between FMT and Fourier transform,
\begin{equation}\label{FMT and FT}
    \mathcal{L}_{M}[f](u)=
    \frac{e^{\pi i u^{T}DB^{-1}u}}{i^{\frac{N}{2}}\sqrt{\mathrm{det}(B)}}
    \left[f(x)e^{\pi i x^{T}B^{-1}Ax}\right]^{\wedge}\left(B^{-1}u\right),
\end{equation}
which plays an important role in the proofs of uncertainty principles for FMT in this paper.

For the free symplectic matrix $M$ taking some special values, FMT becomes some classical transformations, which are given in TABLE \ref{Table 1}.
\begin{table}[H]
	\centering
\caption{Examples for FMT.}\label{Table 1}
 \renewcommand{\arraystretch}{1.2}
	\begin{tabular}{ m{2cm}<{\centering}
 m{2cm}<{\centering}
 m{2cm}<{\centering}
 m{2cm}<{\centering}| m{2.8cm}<{\centering}}
		\hline\hline
  A&   B&   C& D&Transformation\\
  \hline
 0& $I_{N}$& -$I_{N}$&  0& Fourier transform\\
  $\mathrm{diag}(\cos\theta_{1},$&  $\mathrm{diag}(\sin\theta_{1},$  &   $-\mathrm{diag}(\sin\theta_{1},$   & $\mathrm{diag}(\cos\theta_{1},$ &\multirow{2}{*}{FRFT}\\
 $
  \dots,\cos\theta_{N})$ & $
   \dots,\sin\theta_{N})$   &   $
   \dots,\sin\theta_{N})$   & $
   \dots,\cos\theta_{N})$  &\\
 \multirow{2}{*}{$I_{N}$}  & $\mathrm{diag}(b_{11},b_{22}$ & \multirow{2}{*}{0} &   \multirow{2}{*}{$I_{N}$}&\multirow{2}{*}{Fresnel transform}\\
& $\dots,b_{NN})$& & & \\
$\mathrm{diag}(\cosh \theta_{1},$& $\mathrm{diag}(\sinh \theta_{1},$& $\mathrm{diag}(\sinh \theta_{1},$& $\mathrm{diag}(\cosh \theta_{1},$&\multirow{2}{*}{Lorentz transform}\\
$\dots,\cosh \theta_{N})$& $\dots,\sinh \theta_{N})$&$\dots,\sinh \theta_{N})$ &$\dots,\cosh \theta_{N})$ &\\
		\hline\hline
	\end{tabular}
\end{table}
\begin{prop}[e.g. \cite{Chen-Dang-Mai}]\label{additive property}
    For $f\in L^{2}(\mathbb{R}^{N})$, then we have
    \begin{equation*}
         \mathcal{L}_{M_{1}}[ \mathcal{L}_{M_{2}}[f]](u)= \mathcal{L}_{M_{1}M_{2}}[f](u),
    \end{equation*}
where 
$M_{j}=\begin{pmatrix}
    A_{j} & B_{j}\\
    C_{j} & D_{j}
\end{pmatrix} \in \mathrm{Sp}(2N, \mathbb{R}), j=1, 2.$
\end{prop}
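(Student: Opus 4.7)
The plan is to prove the additivity $\mathcal{L}_{M_1}[\mathcal{L}_{M_2}[f]](u)=\mathcal{L}_{M_1M_2}[f](u)$ by direct substitution into the definition (\ref{FMT}) and reduction to a complex Gaussian integral. Since both sides are bounded operators on $L^2(\mathbb{R}^N)$ (FMTs being unitary, as can be checked from (\ref{FMT and FT})), it suffices to establish the identity on the dense subspace $S(\mathbb{R}^N)$, where the iterated integrals are absolutely convergent and Fubini applies without issue. Writing $f\in S(\mathbb{R}^N)$, I would substitute the definition of $\mathcal{L}_{M_2}[f](\xi)$ inside $\mathcal{L}_{M_1}[\,\cdot\,](u)$, interchange the order of integration, and collect the $\xi$-dependent terms, so that the inner integral becomes
\begin{equation*}
I(x,u)=\int_{\mathbb{R}^N}\exp\!\left(\pi i\,\xi^{T}P\,\xi-2\pi i\,\xi^{T}v(x,u)\right)d\xi,
\end{equation*}
where $P=B_1^{-1}A_1+D_2B_2^{-1}$ and $v(x,u)=B_1^{-1}u+B_2^{-T}x$.

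The next step is to show that $P$ is symmetric and factors nicely. From (\ref{conditions 4}) applied to $M_1$, the matrix $B_1^{-1}A_1$ is symmetric, and from (\ref{conditions 3}) applied to $M_2$, the matrix $D_2B_2^{-1}$ is symmetric; hence $P$ is symmetric. A direct multiplication gives $P=B_1^{-1}(A_1B_2+B_1D_2)B_2^{-1}=B_1^{-1}B_{12}B_2^{-1}$, where $B_{12}=A_1B_2+B_1D_2$ is precisely the upper-right block of $M_1M_2$; in particular $\det B_{12}\ne 0$, so $P$ is invertible and one checks (using the symmetry of $P$ together with the symplectic identities applied to $M_1M_2$) that $P^{-1}=B_1^{T}B_{12}^{-T}B_2^{T}$. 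Applying the complex Gaussian integral formula then yields $I(x,u)=c_N|\det P|^{-1/2}e^{-\pi i\,v^{T}P^{-1}v}$ with a constant $c_N$ coming from an appropriate branch of the square root and the signature of $P$.

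I would then expand $v^{T}P^{-1}v$ and combine it with the remaining chirps $e^{\pi i u^{T}D_1B_1^{-1}u}$ and $e^{\pi i x^{T}B_2^{-1}A_2x}$ to match the definition of $\mathcal{L}_{M_1M_2}[f](u)$ term by term. For the cross term, the calculation collapses cleanly: substituting $P^{-1}=B_1^{T}B_{12}^{-T}B_2^{T}$ gives $B_1^{-T}P^{-1}B_2^{-T}=B_{12}^{-T}$, so the mixed $x^{T}u$ coefficient is exactly $-B_{12}^{-1}$ as required. For the $u^{T}u$ coefficient one must verify $D_1B_1^{-1}-B_1^{-T}P^{-1}B_1^{-1}=D_{12}B_{12}^{-1}$ with $D_{12}=C_1B_2+D_1D_2$, and symmetrically for $x^{T}x$ with $A_{12}=A_1A_2+B_1C_2$; both reduce to polynomial identities in the blocks of $M_1$ and $M_2$ that follow from the full set (\ref{conditions 1})--(\ref{conditions 4}) applied to $M_1$, $M_2$, and $M_1M_2$ simultaneously.

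The main obstacle is twofold. First, the complex Gaussian integral must be handled with care because $P$ is real symmetric but the integrand is purely oscillatory; I would interpret the integral as an oscillatory integral (or regularize by $P\rightsquigarrow P+i\epsilon I$ and pass $\epsilon\downarrow0^{+}$) and track the resulting phase $e^{i\pi\,\mathrm{sgn}(P)/4}$, checking that it combines with $i^{-N/2}\sqrt{\det B_1}\cdot i^{-N/2}\sqrt{\det B_2}$ to produce exactly $i^{-N/2}\sqrt{\det B_{12}}$. Second, verifying the three coefficient identities above is a nontrivial exercise in symplectic linear algebra; I expect the cleanest route is to repeatedly use $M_1M_2\in\mathrm{Sp}(2N,\mathbb{R})$, which gives, e.g., the symmetry of $D_{12}B_{12}^{-1}$ and $B_{12}^{-1}A_{12}$, and then compare transposes to reduce each identity to the block conditions for $M_1$ and $M_2$ individually. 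Once the identity is established on $S(\mathbb{R}^N)$, the extension to $L^2(\mathbb{R}^N)$ follows from the $L^2$-boundedness of FMTs and density.
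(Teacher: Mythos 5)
The paper gives no proof of this proposition at all; it is cited directly from \cite{Chen-Dang-Mai}, so there is nothing in the text to compare your argument against. Taken on its own, your direct Gaussian-integral computation is the standard way to establish the composition law, and the algebra you sketch does go through. Writing $P=B_1^{-1}A_1+D_2B_2^{-1}$, symmetry of $P$ follows exactly as you say from (\ref{conditions 3})--(\ref{conditions 4}), and $P=B_1^{-1}B_{12}B_2^{-1}$ with $P^{-1}=B_2B_{12}^{-1}B_1=B_1^{T}B_{12}^{-T}B_2^{T}$ is correct. I checked the three coefficient matchings: the cross term collapses to $B_{12}^{-T}$ as you claim; the $u^{T}u$ identity reduces (after multiplying by $B_{12}$ on the right and using $D_1B_1^{-1}=B_1^{-T}D_1^{T}$ together with $A_1^{T}D_1-C_1^{T}B_1=I$) to $D_1B_1^{-1}A_1-B_1^{-T}=C_1$, which holds; and the $x^{T}x$ identity reduces to the same identity with $M_2$ in place of $M_1$. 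So the algebraic core is sound.

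Two caveats. First, the phrase ``in particular $\det B_{12}\ne 0$'' is not a deduction: the product of two free symplectic matrices need not be free (for example $J\cdot J=-I$ has vanishing upper-right block). Invertibility of $B_{12}$ is an \emph{implicit hypothesis} of the statement, needed for $\mathcal{L}_{M_1M_2}$ to even be defined by (\ref{FMT}); you should state it as such rather than present it as a consequence. Second, the phase bookkeeping you flag as an ``obstacle'' really is the delicate part and is not merely a check: the metaplectic group is a double cover of $\mathrm{Sp}(2N,\mathbb{R})$, and with a generic choice of branch for $i^{-N/2}\sqrt{\det B}$ the composite of two FMTs can differ from $\mathcal{L}_{M_1M_2}$ by an overall sign. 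The signature factor $e^{i\pi\,\mathrm{sgn}(P)/4}$ coming from the oscillatory Gaussian integral is exactly the Maslov-index correction, and whether it combines with $i^{-N/2}/\sqrt{\det B_1}\cdot i^{-N/2}/\sqrt{\det B_2}$ to give $i^{-N/2}/\sqrt{\det B_{12}}$ on the nose (rather than $\pm$ that) depends on the branch convention. Your plan correctly identifies this as the point requiring care, but the sketch does not resolve it; a complete proof must pin down the branch and verify the sign, or else acknowledge the identity up to a sign, which is the form in which the metaplectic composition law is usually stated.
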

\begin{definition}\label{def-jihao}
Let $f(x)=\left|f(x)\right|e^{2\pi i\varphi(x)}, xf(x)$ and $w\widehat{f}(w)\in L^{2}(\mathbb{R}^{N})$. For $j,k=1,\cdots, N$, we define
    \begin{align*}
    (\romannumeral1)\quad & \left<x\right>_f=(\left<x_1\right>_f,\cdots, \left<x_N\right>_f)^T,\text{ where }\left<x_j\right>_f=\int_{\mathbb{R}^{N}}x_j\left|f(x)\right|^2\mathrm{d}x,
    \\
 (\romannumeral2)\quad & \left<w\right>_{\widehat{f}}=(\left<w_1\right>_{\widehat{f}},\cdots, \left<w_N\right>_{\widehat{f}})^T,\text{ where }\left<w_j\right>_{\widehat{f}}=\int_{\mathbb{R}^{N}}w_j\left|\widehat{f}(w)\right|^2\mathrm{d}w,\\  
 (\romannumeral3) \quad & \Delta x^2=\int_{\mathbb{R}^{N}}|(x-\left<x\right>_f )f(x)|^2\mathrm{d}x, \\
 (\romannumeral4)\quad &
 \Delta w^2=\int_{\mathbb{R}^{N}}|(w-\left<w\right>_{\widehat{f}})\widehat{f}(w)|^2\mathrm{d}w,\\
(\romannumeral5) \quad & \mathrm{Cov}_{x,w}=\int_{\mathbb{R}^{N}}(x-\left<x\right>_f)^T (\nabla \varphi(x) -\left<w\right>_{\widehat{f}})\left|f(x)\right|^2\mathrm{d}x,\\
(\romannumeral6) \quad &
\mathrm{COV}_{x,w}=\int_{\mathbb{R}^{N}}\left|(x-\left<x\right>_f)^T\right|\left|\nabla \varphi(x) -\left<w\right>_{\widehat{f}}\right|\left|f(x)\right|^2\mathrm{d}x,\\
         (\romannumeral7) \quad &X=\left(\Delta x_{j,k}^{2}\right), \text{ where }\Delta x_{j,k}^{2}=\int_{\mathbb{R}^{N}}(x_{j}-\left<x_j\right>_f)(x_{k}-\left<x_k\right>_f)|f(x)|^{2}\mathrm{d}x,\\
         (\romannumeral8)\quad &W=\left(\Delta w_{j,k}^{2}\right), \text{ where } \Delta w_{j,k}^{2}=\int_{\mathbb{R}^{N}}(w_{j}-\left<w_j\right>_{\widehat{f}})(w_{k}-\left<w_k\right>_{\widehat{f}})|\widehat{f}(w)|^{2}\mathrm{d}w,\\
         (\romannumeral9) \quad &\mathrm{Cov}_{X,W}=\left(\mathrm{Cov}_{x,w}^{j,k}\right), \\
         \quad & \text{ where } \mathrm{Cov}_{x,w}^{j,k}=\int_{\mathbb{R}^{N}}(x_{j}-\left<x_j\right>_f)\left(\frac{\partial \varphi(x)}{\partial x_{k}}-\left<w_k\right>_{\widehat{f}}\right)\left|f(x)\right|^{2}\mathrm{d}x,\\
(\romannumeral10) \quad&\mathrm{COV}_{x,w}^{j,k}=\int_{\mathbb{R}^{N}}\left|(x_{j}-\left<x_j\right>_f) \left(\frac{\partial \varphi(x)}{\partial x_{k}}-\left<w_k\right>_{\widehat{f}}\right)\right|\left|f(x)\right|^2\mathrm{d}x.
    \end{align*}
\end{definition}
Without loss of generality, in this paper we always assume $\left<x_j\right>_f=0$ and $\left<w_j\right>_{\widehat{f}}=0, j=1,\cdots, N$. In \S \ref{section 5}, our discussion is based on the condition 
\begin{equation}\label{condition-sigma}
    \int_{\mathbb R^{2N}} {(1+|z|^2)\left|W_\sigma f(z)\right|} dz<\infty.
\end{equation}
One can easily have that 
if (\ref{condition-sigma}) holds, then $\Sigma_{\alpha,\beta}<\infty$ for $\alpha,\beta=1,\cdots,2N$ (see equation (\ref{SIGMA}) for its definition). In fact, we have
\begin{equation*}
|\Sigma_{\alpha,\beta}|=\left|\int_{\mathbb{R}^{2N}}z_{\alpha}z_{\beta}W_{\sigma}f(z)dz\right|
\le  \int_{\mathbb R^{2N}} {(1+|z|^2)\left|W_\sigma f(z)\right|} dz<\infty.
\end{equation*}
In the following we can show that the condition (\ref{condition-sigma}) is consistent with assumptions in Definition \ref{def-jihao}.
Since
$\int_{\mathbb{R}^{N}}W_{\sigma}f(x,w)\mathrm{d}w=|f(x)|^2$ and  $\int_{\mathbb{R}^{N}}W_{\sigma}f(x,w)\mathrm{d}x=|\widehat{f}(w)|^2,$
we have 
\begin{align*}
    \int_{\mathbb R^{2N}} (1+|z|^2)W_\sigma f(z) dz=& \iint_{\mathbb R^{2N}}(1+|x|^2+|w|^2)W_\sigma f(x,w)\mathrm{d}x\mathrm{d}w\notag\\
=&\Vert f\Vert_2^2+\Delta x^2+\Delta w^2.
\end{align*}
Hence we have that $\int_{\mathbb R^{2N}} (1+|z|^2)\left|W_\sigma f(z)\right| dz<\infty$ if and only if $f, xf(x)$ and $w\widehat{f}(w)\in L^2(\mathbb{R}^{N})$. In this paper, we assume  
$ <\widehat{Z}_{\alpha}>_f=\int_{\mathbb{R}^{N}}\overline{f(x)}(\widehat{Z}_{\alpha}f)(x)\mathrm{d}x=0.$
We have that $<\widehat{Z}_{\alpha}>_f=0$ if and only if $
\left<x_{\alpha}\right>_f=0,\alpha=1,\cdots, N$ and $\left<w_{\alpha-N}\right>_{\widehat{f}}=0,\alpha=N+1,\cdots,2N.$

\begin{prop}\label{prop-LM}
Let $f(x)=\left|f(x)\right|e^{2\pi i\varphi(x)}, xf(x)$ and $w\widehat{f}(w)\in L^{2}(\mathbb{R}^{N})$. Then there holds
    \begin{align}\label{prop-e-LM}
        \int_{\mathbb{R}^{N}}\left|u\mathcal{L}_{M}[f](u)\right|^2\mathrm{d}u=&\int_{\mathbb{R}^{N}}x^T A^TA x\left|f(x)\right|^2\mathrm{d}x+\int_{\mathbb{R}^{N}}w^T B^TB w\left|\widehat{f}(w)\right|^2\mathrm{d}w\notag\\
        &+2\int_{\mathbb{R}^N}x^TA^T B\nabla \varphi(x)\left|f(x)\right|^2\mathrm{d}x.
    \end{align}
\begin{proof}
    Let $g(x)=f(x)e^{\pi i x^{T}B^{-1}Ax}$. Using (\ref{FMT and FT}), we have
\begin{equation*}
    \mathcal{L}_{M}[f](u)=\frac{e^{\pi i u^{T}DB^{-1}u}}{i^{\frac{N}{2}}\sqrt{\mathrm{det}(B)}}\widehat{g}\left(B^{-1}u\right).
\end{equation*}
By $B^{-1}u=w$, Parseval's identity and 
\begin{equation}\label{nabla g}
    \nabla g(x)=\nabla f(x)e^{\pi i x^{T}B^{-1}Ax}+2\pi i B^{-1}Axf(x)e^{\pi i x^{T}B^{-1}Ax},
\end{equation}
one has
\begin{align}\label{eq-LM}
     \int_{\mathbb{R}^{N}}\left|u\mathcal{L}_{M}[f](u)\right|^2\mathrm{d}u=&\frac{1}{\left|\mathrm{det}(B)\right|}\int_{\mathbb{R}^{N}}\left|u\widehat{g}\left(B^{-1}u\right)\right|^2\mathrm{d}u\notag\\
     =& \int_{\mathbb{R}^{N}}\left|Bw\widehat{g}(w)\right|^2\mathrm{d}w\notag\\
    =&\frac{1}{4\pi^2}\int_{\mathbb{R}^{N}}
    \left|B \nabla g(x)\right|^2
    \mathrm{d}x\notag\\
    =&\frac{1}{4\pi^2}\int_{\mathbb{R}^{N}}\left|B\nabla f(x)+2\pi i Axf(x)\right|^2\mathrm{d}x\notag\\
    =&\frac{1}{4\pi^2}\int_{\mathbb{R}^{N}}\left(\nabla f(x)\right)^TB^TB \overline{\nabla f(x)}\mathrm{d}x+\int_{\mathbb{R}^{N}}x^TA^TAx\left|f(x)\right|^2\mathrm{d}x\notag\\
&+\frac{1}{2\pi i}\int_{\mathbb{R}^{N}}x^TA^TB\left(\nabla f(x)\overline{f(x)}-\overline{\nabla f(x)}f(x)\right)\mathrm{d}x.
\end{align}
Since
\begin{equation}\label{nabla f}
    \nabla f(x)=\nabla \left|f(x)\right| e^{2\pi i \varphi(x)}+2\pi i \nabla \varphi(x)\left|f(x)\right|e^{2\pi i \varphi(x)},
\end{equation}
we have
\begin{align*}
    \frac{1}{2\pi i}\int_{\mathbb{R}^{N}}x^TA^TB\left(\nabla f(x)\overline{f(x)}-\overline{\nabla f(x)}f(x)\right)\mathrm{d}x
    =2\int_{\mathbb{R}^N}x^TA^T B\nabla \varphi(x)\left|f(x)\right|^2\mathrm{d}x.
\end{align*}
Note that
\begin{align*}
    \frac{1}{4\pi^2}\int_{\mathbb{R}^{N}}\left(\nabla f(x)\right)^TB^TB \overline{\nabla f(x)}\mathrm{d}x=\int_{\mathbb{R}^{N}}w^T B^TB w\left|\widehat{f}(w)\right|^2\mathrm{d}w.
\end{align*}
Hence, we have (\ref{prop-e-LM}).
\end{proof}
\end{prop}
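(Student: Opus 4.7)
The plan is to reduce everything to standard Fourier analysis by exploiting the representation (\ref{FMT and FT}), which writes $\mathcal{L}_M[f](u)$ as a unimodular phase times $\widehat{g}(B^{-1}u)$, where $g(x) = f(x) e^{\pi i x^T B^{-1}A x}$. First I would note that since the phase $e^{\pi i u^T DB^{-1}u}$ has modulus one, $|\mathcal{L}_M[f](u)|^2 = |\det B|^{-1}|\widehat{g}(B^{-1}u)|^2$; then the change of variables $w = B^{-1}u$ converts the left-hand side of (\ref{prop-e-LM}) into $\int_{\mathbb{R}^N} w^T B^T B w\,|\widehat{g}(w)|^2 dw$. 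Applying Parseval's identity componentwise (using that $\partial_j g$ has Fourier transform $2\pi i w_j \widehat{g}$) rewrites this as $\frac{1}{4\pi^2}\int (\nabla g)^T B^T B \overline{\nabla g}\, dx$, which equals $\frac{1}{4\pi^2}\int |B\nabla g|^2 dx$.

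The next step is to compute $\nabla g$ explicitly. Differentiating the quadratic phase $\pi i x^T B^{-1}A x$ naively gives $\pi i (B^{-1}A + A^T B^{-T})x$, but by the symplectic identity (\ref{conditions 4}) we have $B^{-1}A = A^T B^{-T}$, so the gradient simplifies to $2\pi i B^{-1}A x$. Multiplying through by $B$ therefore yields $B \nabla g(x) = (B \nabla f(x) + 2\pi i A x f(x))\, e^{\pi i x^T B^{-1}A x}$, and since the exponential is unimodular, the integrand becomes $|B\nabla f(x) + 2\pi i A x f(x)|^2$.

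Expanding this square produces three terms. The $(\nabla f)^T B^T B \overline{\nabla f}$ contribution, divided by $4\pi^2$, equals $\int w^T B^T B w\,|\widehat{f}(w)|^2 dw$ by another application of Parseval. The diagonal $|Ax|^2 |f|^2$ term directly gives $\int x^T A^T A x\,|f(x)|^2 dx$. The cross term is where the polar representation $f = |f| e^{2\pi i \varphi}$ enters: differentiating gives $\nabla f = (\nabla |f| + 2\pi i |f|\nabla\varphi) e^{2\pi i \varphi}$, and a short calculation shows $\nabla f\,\overline{f} - \overline{\nabla f}\, f = 4\pi i\, |f|^2 \nabla\varphi$. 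Consequently the cross term collapses neatly to $2\int x^T A^T B \nabla\varphi\, |f|^2 dx$.

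I expect the main delicate point to be bookkeeping in the cross term: tracking the factors of $i$ and the two conjugations so that the real part emerges correctly, and verifying that the $4\pi^2$ in front combines with $\frac{1}{2\pi i}$ and the $4\pi i$ produced by the polar identity to give a clean coefficient of $2$. The only analytic input beyond Parseval is the assumed pointwise differentiability of $|f|$ and $\varphi$ that legitimises the polar-decomposition of $\nabla f$; the symplectic symmetry $B^{-1}A = A^T B^{-T}$ from (\ref{conditions 4}) is essential to get $B\nabla g$ into a form free of $B^{-1}$.
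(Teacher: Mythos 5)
Your proof is correct and follows essentially the same route as the paper: reduce to $\frac{1}{4\pi^2}\int|B\nabla f + 2\pi i A x f|^2\,dx$ via the representation (\ref{FMT and FT}), the substitution $w=B^{-1}u$, and Parseval; then expand the square and evaluate the cross term with the polar identity $\nabla f\,\overline f - \overline{\nabla f}\,f = 4\pi i\,|f|^2\nabla\varphi$. You make the role of the symplectic symmetry $B^{-1}A = A^T B^{-T}$ explicit in simplifying $\nabla g$, which the paper uses silently in stating (\ref{nabla g}); otherwise the two arguments are the same.
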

\section{HPW uncertainty principles for free metaplectic transformation}\label{section 3}
In this section, we establish two uncertainty principles in two FMT domains, and one uncertainty principle in one time and one FMT domains. The first uncertainty principle in two FMT domains obtained is given as follows.
\begin{theorem}\label{Up for MO component}
  Let $f(x)=\left|f(x)\right|e^{2\pi i\varphi(x)}, xf(x)$ and $w\widehat{f}(w)\in L^{2}(\mathbb{R}^{N})$. Then there holds
    \begin{align}\label{Up for FMT2}
&\int_{\mathbb{R}^{N}}\left|u\mathcal{L}_{M_{1}}[f](u)\right|^{2}\mathrm{d}u
\int_{\mathbb{R}^{N}}\left|u\mathcal{L}_{M_{2}}[f](u)\right|^{2}\mathrm{d}u\notag\\
&\ge\biggr[\sum_{j=1}^{N}\biggr(\frac{1}{16\pi^{2}}\left|\left(A_{1}B_{2}^{T}-B_{1}A_{2}^{T}\right)_{jj}\right|^{2}+\Bigr|\Bigr(A_{1}XA_{2}^{T}+B_{1}WB_{2}^{T}\notag\\
&\quad+A_{1}\mathrm{Cov}_{X,W}B_{2}^{T}+B_{1}\left(\mathrm{Cov}_{X,W}\right)^{T}A_{2}^{T}\Big)_{jj}
\Big|^{2}
\bigg)^{\frac{1}{2}}
\bigg]^{2}.
\end{align}
\end{theorem}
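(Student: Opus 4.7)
The strategy is to reduce \eqref{Up for FMT2} to a per-index uncertainty bound (one for each $j = 1, \ldots, N$) and then combine the resulting inequalities with the Cauchy-Schwarz inequality $\bigl(\sum_j a_j\bigr)\bigl(\sum_j b_j\bigr) \ge \bigl(\sum_j \sqrt{a_j b_j}\bigr)^2$. Since the left-hand side of \eqref{Up for FMT2} equals $\bigl(\sum_{j=1}^N \int |u_j \mathcal L_{M_1}[f](u)|^2 du\bigr)\bigl(\sum_{j=1}^N \int |u_j \mathcal L_{M_2}[f](u)|^2 du\bigr)$, taking $a_j = \int |u_j \mathcal L_{M_1}[f]|^2 du$ and $b_j = \int |u_j \mathcal L_{M_2}[f]|^2 du$ matches the outer $[\sum_j(\cdots)^{1/2}]^2$ structure of the right-hand side exactly.

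For the per-$j$ bound, I would first establish a per-coordinate refinement of Proposition \ref{prop-LM}. Running the same Parseval computation as in \eqref{eq-LM}, but retaining only the $j$-th coordinate $u_j = B_{j,\cdot} w$ in the substitution $w = B^{-1}u$, yields
\begin{equation*}
\int_{\mathbb{R}^N} \left|u_j \mathcal L_M[f](u)\right|^2 du = \left\| \widehat Y_j^M f \right\|_2^2, \qquad \widehat Y_j^M := A_{j,\cdot} \widehat X + B_{j,\cdot} \widehat P,
\end{equation*}
where $A_{j,\cdot} \widehat X := \sum_k A_{jk} \widehat X_k$; the row identity $B_{j,\cdot} B^{-1} A = A_{j,\cdot}$ is what forces the chirp-derivative cross term in $\nabla g(x)$ to collapse to $A_{j,\cdot}$. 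Setting $\widehat A_j := \widehat Y_j^{M_1}$, $\widehat B_j := \widehat Y_j^{M_2}$, the desired per-$j$ inequality is the strong (Cohen-type) operator uncertainty principle of \eqref{SOUP} applied with $\alpha = \beta = 0$, admissible because the centering convention gives $\langle \widehat A_j \rangle_f = \langle \widehat B_j \rangle_f = 0$.

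The commutator is immediate from the canonical relations \eqref{cov property}: the $\widehat X\widehat X$ and $\widehat P\widehat P$ blocks commute, leaving
\begin{equation*}
\bigl[\widehat A_j, \widehat B_j\bigr] = \tfrac{i}{2\pi} \bigl(A_1 B_2^T - B_1 A_2^T\bigr)_{jj} \widehat I,
\end{equation*}
which, after squaring and multiplying by $\tfrac14$, reproduces the $\tfrac{1}{16\pi^2} |(A_1 B_2^T - B_1 A_2^T)_{jj}|^2$ summand. For the anticommutator I would use the self-adjointness identity $\langle [\widehat A_j, \widehat B_j]_+ f, f\rangle = 2 \Re \langle \widehat A_j f, \widehat B_j f\rangle$. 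Expanding $\widehat A_j f = (A_{1,j,\cdot}\, x) f + \tfrac{1}{2\pi i} B_{1,j,\cdot} \nabla f$ (and similarly for $\widehat B_j$) and substituting $f = |f| e^{2\pi i \varphi}$ with $\bar f \partial_\ell f = |f| \partial_\ell |f| + 2\pi i |f|^2 \partial_\ell \varphi$, the $\widehat X\widehat X$ and $\widehat P\widehat P$ blocks contribute $(A_1 X A_2^T)_{jj}$ and, via Parseval, $(B_1 W B_2^T)_{jj}$; the mixed $\widehat X\widehat P$ and $\widehat P\widehat X$ blocks, after the integration by parts $\int x_k \partial_\ell(|f|^2)\, dx = -\delta_{k\ell}$, contribute the real part $(A_1 \mathrm{Cov}_{X,W} B_2^T)_{jj} + (B_1 \mathrm{Cov}_{X,W}^T A_2^T)_{jj}$ together with the purely imaginary residue $(A_1 B_2^T - A_2 B_1^T)_{jj}/(4\pi i)$, which is killed by $\Re$. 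This produces exactly the second summand of the per-$j$ bound.

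The main technical obstacle is the bookkeeping in the anticommutator calculation: one must track how the factor $1/(2\pi i)$ in each $\widehat P$ distributes through both the $|f|\partial_\ell|f|$ and $2\pi i |f|^2 \partial_\ell \varphi$ pieces of $\bar f \partial_\ell f$, so that the four matrix terms emerge with the correct transpose patterns and the spurious boundary pieces cancel under $\Re$. Once the per-$j$ Cohen bound is in hand, the Cauchy-Schwarz aggregation mentioned in the first paragraph completes the proof of \eqref{Up for FMT2}.
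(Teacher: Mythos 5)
Your proof is correct, but it follows a genuinely different route from the paper's Section~3 proof of this theorem, and is in fact essentially the argument the paper gives later in Section~5 (Lemma~\ref{lemma operators}, Theorem~\ref{corollary 1}, and the remark after it). The paper's Section~3 proof is a bare-hands computation: it substitutes $u=B_kw$, pulls out the chirp, and applies Parseval to rewrite $\int|u_j\mathcal L_{M_k}[f]|^2\,\mathrm{d}u$ as an $L^2$-norm of an explicit expression in $|f|$, $\nabla|f|$, $\nabla\varphi$ (displays (\ref{M_1})--(\ref{M_2})); it then applies Cauchy--Schwarz directly to the pairing of these two expressions and splits the resulting integral into a real part ($I_2+I_3$) and a purely imaginary part ($\frac{1}{2\pi i}I_1$), whose squared modulus is the sum of squares. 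Your proof instead packages each coordinate integral as $\|\widehat Y_j^{M_k}f\|_2^2$ for the self-adjoint operator $\widehat Y_j^{M_k}=\sum_\alpha M^{(k)}_{j,\alpha}\widehat Z_\alpha$ (symplectic covariance plus unitarity, or equivalently the chirp-Parseval step), and then invokes the Cohen-type operator uncertainty inequality (\ref{SOUP}) applied to $\widehat Y_j^{M_1},\widehat Y_j^{M_2}$; the commutator and anticommutator then reproduce the two summands of the per-$j$ bound. The two routes are computationally equivalent --- Cohen's inequality is itself Cauchy--Schwarz plus the real/imaginary split --- but yours buys conceptual clarity and generality, since it sees immediately that the result only depends on $M_1,M_2$ through the associated metaplectic operators (which is what Section~5 exploits to extend to non-free symplectic matrices), while the paper's Section~3 proof is self-contained and avoids citing the abstract operator lemma. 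Both your commutator $\frac{i}{2\pi}(A_1B_2^T-B_1A_2^T)_{jj}\widehat I$ and your anticommutator bookkeeping (including the cancellation of the $\frac{1}{4\pi i}(A_1B_2^T-A_2B_1^T)_{jj}$ residue under $\Re$) check out, and the centering assumption $\langle\widehat Z_\alpha\rangle_f=0$ made in the paper makes $\alpha=\beta=0$ the right normalization, as you note.
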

\begin{proof}
According to Cauchy-Schwartz’s inequality, we have
\begin{align}\label{Up for second FMT}
&\int_{\mathbb{R}^{N}}\left|u\mathcal{L}_{M_{1}}[f](u)\right|^{2}\mathrm{d}u
\int_{\mathbb{R}^{N}}\left|u\mathcal{L}_{M_{2}}[f](u)\right|^{2}\mathrm{d}u\notag\\
=&\sum_{j=1}^{N}\int_{\mathbb{R}^{N}}\left|u_j\mathcal{L}_{M_{1}}[f](u)\right|^{2}\mathrm{d}u\sum_{j=1}^{N}\int_{\mathbb{R}^{N}}\left|u_j\mathcal{L}_{M_{2}}[f](u)\right|^{2}\mathrm{d}u\notag\\
\ge&\left[\sum_{j=1}^{N}\left(\int_{\mathbb{R}^{N}}\left|u_j\mathcal{L}_{M_{1}}[f](u)\right|^{2}\mathrm{d}u\int_{\mathbb{R}^{N}}\left|u_j\mathcal{L}_{M_{2}}[f](u)\right|^{2}\mathrm{d}u\right)^{\frac{1}{2}}\right]^2.
\end{align}
Let $g(x)=f(x)e^{\pi i x^TB_1^{-1}A_1x}$. 
    From (\ref{FMT and FT}), we have
    \begin{equation*}
     \mathcal{L}_{M_{1}}[f](u)=\frac{e^{\pi i u^{T}D_{1}B_{1}^{-1}u}}{i^{\frac{N}{2}}\sqrt{\mathrm{det}(B_{1})}}\widehat{g}\left(B_{1}^{-1}u\right).
    \end{equation*}
For each $j=1,\cdots, N$, one has that
\begin{equation*}
\int_{\mathbb{R}^{N}}\left|u_j\mathcal{L}_{M_{1}}[f](u)\right|^{2}\mathrm{d}u\notag\\
=\frac{1}{\left|\mathrm{det}\left(B_1\right)\right|}\int_{\mathbb{R}^{N}}\left|u_j \widehat{g}\left(B_1^{-1}u\right)
\right|^{2}\mathrm{d}u.
\end{equation*}
Let $u=B_1w$. Denote by $\left(B_1\right)_{jk}$ the $(j,k)$-th element of $B_1$, which means that
$u_j=\sum_{k=1}^{N}\left(B_1\right)_{jk}w_k$. 
Since $A_1$ and $B_1$ satisfy (\ref{conditions 4}), we have 
\begin{equation*}
    \frac{\partial g(x)}{\partial x_k}=\frac{\partial f(x)}{\partial x_k}e^{\pi ix^TB_1^{-1}A_1x}+2\pi i\sum_{m=1}^{N}\left(B_1^{-1}A_1\right)_{km}x_m f(x)e^{\pi ix^TB_1^{-1}A_1x}.
\end{equation*}
Using Parseval's identity and
\begin{equation*}
    \frac{\partial f(x)}{\partial x_k}=\frac{\partial \left|f(x)\right|}{\partial x_k}e^{2\pi i\varphi(x)}+2\pi i\frac{\partial \varphi(x)}{\partial x_k}\left|f(x)\right|e^{2\pi i\varphi(x)},
\end{equation*}
we have
\begin{align}\label{M_1}
&\int_{\mathbb{R}^{N}}\left|u_j\mathcal{L}_{M_{1}}[f](u)\right|^{2}\mathrm{d}u\notag\\
=&\int_{\mathbb{R}^{N}}\left|\sum_{k=1}^{N}\left(B_1\right)_{jk}w_k\widehat{g}(w)\right|^2\mathrm{d}w\notag\\
=&\int_{\mathbb{R}^{N}}\left|\frac{1}{2\pi i}\sum_{k=1}^{N}\left(B_1\right)_{jk} \frac{\partial g(x)}{\partial x_k}
\right|^2\mathrm{d}x\notag\\
=&\int_{\mathbb{R}^{N}}\left|\frac{1}{2\pi i}\sum_{k=1}^{N}\left(B_1\right)_{jk} \frac{\partial f(x)}{\partial x_k}
+\sum_{k=1}^{N}\left(A_1\right)_{jk}x_k f(x)\right|^2
\mathrm{d}x\notag\\
=&\int_{\mathbb{R}^{N}}\left|\frac{1}{2\pi i}\sum_{k=1}^{N}\left(B_1\right)_{jk} \frac{\partial \left|f(x)\right|}{\partial x_k}+\sum_{k=1}^{N}\left(B_1\right)_{jk} \frac{\partial \varphi(x)}{\partial x_k}\left|f(x)\right|
+\sum_{k=1}^{N}\left(A_1\right)_{jk}x_k \left|f(x)\right|\right|^2
\mathrm{d}x.\notag\\
\end{align}
Similarly, we have
\begin{align}\label{M_2}
&\int_{\mathbb{R}^{N}}\left|u_j\mathcal{L}_{M_{2}}[f](u)\right|^{2}\mathrm{d}u\notag\\
=&\int_{\mathbb{R}^{N}}\left|\frac{1}{2\pi i}\sum_{k=1}^{N}\left(B_2\right)_{jk} \frac{\partial \left|f(x)\right|}{\partial x_k}+\sum_{k=1}^{N}\left(B_2\right)_{jk} \frac{\partial \varphi(x)}{\partial x_k}\left|f(x)\right|
+\sum_{k=1}^{N}\left(A_2\right)_{jk}x_k \left|f(x)\right|\right|^2
\mathrm{d}x.\notag\\
\end{align}
Using Cauchy-Schwartz’s inequality, one has that
\begin{align*}
&\int_{\mathbb{R}^{N}}\left|u_j\mathcal{L}_{M_{1}}[f](u)\right|^{2}\mathrm{d}u\int_{\mathbb{R}^{N}}\left|u_j\mathcal{L}_{M_{2}}[f](u)\right|^{2}\mathrm{d}u\notag\\
\ge&\Biggr|
\int_{\mathbb{R}^{N}}\left(\frac{1}{2\pi i}\sum_{k=1}^{N}\left(B_1\right)_{jk} \frac{\partial \left|f(x)\right|}{\partial x_k}+\sum_{k=1}^{N}\left(B_1\right)_{jk} \frac{\partial \varphi(x)}{\partial x_k}\left|f(x)\right|
+\sum_{k=1}^{N}\left(A_1\right)_{jk}x_k \left|f(x)\right|\right)\notag\\
&\times\left(-\frac{1}{2\pi i}\sum_{l=1}^{N}\left(B_2\right)_{jl} \frac{\partial \left|f(x)\right|}{\partial x_l}+\sum_{l=1}^{N}\left(B_2\right)_{jl} \frac{\partial \varphi(x)}{\partial x_l}\left|f(x)\right|
+\sum_{l=1}^{N}\left(A_2\right)_{jl}x_l \left|f(x)\right|
\right)\mathrm{d}x
\Bigg|^2\notag\\
=&\left|\frac{1}{2\pi i}I_1+I_2+I_3\right|^2,
\end{align*}
where
\begin{align*}
    I_1=&-\sum_{k=1}^{N}\left(A_1\right)_{jk}\sum_{l=1}^{N}\left(B_2\right)_{jl}\int_{\mathbb{R}^{N}}x_k\frac{\partial \left|f(x)\right|}{\partial x_l}\left|f(x)\right|\mathrm{d}x\notag\\
    &+\sum_{k=1}^{N}\left(B_1\right)_{jk}\sum_{l=1}^{N}\left(A_2\right)_{jl}\int_{\mathbb{R}^{N}}x_l\frac{\partial \left|f(x)\right|}{\partial x_k}\left|f(x)\right|\mathrm{d}x,
\end{align*}
\begin{align*}
I_2=&\sum_{k=1}^{N}\left(A_1\right)_{jk}\sum_{l=1}^{N}\left(A_2\right)_{jl}\Delta x_{k,l}^2+\sum_{k=1}^{N}\left(A_1\right)_{jk}\sum_{l=1}^{N}\left(B_2\right)_{jl}\mathrm{Cov}_{x.w}^{k,l}
\notag\\
&+\sum_{k=1}^{N}\left(B_1\right)_{jk}\sum_{l=1}^{N}\left(A_2\right)_{jl}\mathrm{Cov}_{x.w}^{l,k}
\end{align*}
and 
\begin{align*}
    I_3=&\frac{1}{4\pi^2}\sum_{k=1}^{N}\left(B_1\right)_{jk}\sum_{l=1}^{N}\left(B_2\right)_{jl}\int_{\mathbb{R}^{N}}\frac{\partial \left|f(x)\right|}{\partial x_k}\frac{\partial \left|f(x)\right|}{\partial x_l}\mathrm{d}x\notag\\
&+\frac{1}{2\pi i}\sum_{k=1}^{N}\left(B_1\right)_{jk}\sum_{l=1}^{N}\left(B_2\right)_{jl}\int_{\mathbb{R}^{N}}\frac{\partial \left|f(x)\right|}{\partial x_k}\frac{\partial \varphi(x)}{\partial x_l}\left|f(x)\right|\mathrm{d}x\notag\\
&-\frac{1}{2\pi i}\sum_{k=1}^{N}\left(B_1\right)_{jk}\sum_{l=1}^{N}\left(B_2\right)_{jl}\int_{\mathbb{R}^{N}}\frac{\partial \left|f(x)\right|}{\partial x_l}\frac{\partial \varphi(x)}{\partial x_k}\left|f(x)\right|\mathrm{d}x\notag\\
&+\sum_{k=1}^{N}\left(B_1\right)_{jk}\sum_{l=1}^{N}\left(B_2\right)_{jl}\int_{\mathbb{R}^{N}}\frac{\partial \varphi(x)}{\partial x_k}\frac{\partial \varphi(x)}{\partial x_l}\left|f(x)\right|^2\mathrm{d}x.
\end{align*}
A direct computation yields that 
    \begin{equation*}
        I_1=\frac{1}{2}\left(A_1B_2^T-B_1 A_2^T\right)_{jj},
    \end{equation*}
\begin{equation*}
    I_2=\left(A_1 XA_2^T+A_1\mathrm{Cov}_{X,W}B_2^T+B_1\left(\mathrm{Cov}_{X,W}\right)^{T}A_2^T\right)_{jj}
\end{equation*}
and 
\begin{equation*}
    I_3=\left(B_1 W B_2^T\right)_{jj}.
\end{equation*}
Then we have
\begin{align}\label{component FMT}
    &\int_{\mathbb{R}^{N}}\left|u_j\mathcal{L}_{M_{1}}[f](u)\right|^{2}\mathrm{d}u\int_{\mathbb{R}^{N}}\left|u_j\mathcal{L}_{M_{2}}[f](u)\right|^{2}\mathrm{d}u\notag\\
    \ge&\frac{1}{4\pi^2}\left|I_1\right|^2+\left|I_2+I_3\right|^2\notag\\
    =&\frac{1}{16\pi^2}\left|\left(A_1B_2^T-B_1 A_2^T\right)_{jj}\right|^2+\Bigr|
    \Bigr(A_1 XA_2^T+B_1 W B_2^T+A_1\mathrm{Cov}_{X,W}B_2^T\notag\\
&+B_1\left(\mathrm{Cov}_{X,W}\right)^TA_2^T\Big)_{jj}
    \Big|^2.
    \end{align}
    By substituting (\ref{component FMT}) into (\ref{Up for second FMT}), one has the desired inequality (\ref{Up for FMT2}). 
\end{proof}
\begin{remark}
    When $N=1,$ 
    (\ref{Up for FMT2}) reduces to the following inequality,
    \begin{align}\label{one D}
&\int_{\mathbb{R}}\left|u\mathcal{L}_{M_1}[f](u)\right|^{2}\mathrm{d}u\int_{\mathbb{R}}\left|u\mathcal{L}_{M_2}[f](u)\right|^{2}\mathrm{d}u\notag\\
&\ge\frac{\left(a_{1}b_{2}-a_{2}b_{1}\right)^{2}}{16\pi^{2}}+\biggr[a_{1}a_{2}\Delta x^{2}+b_{1}b_{2}\Delta w^{2}+\left(a_{1}b_{2}+a_{2}b_{1}\right)\mathrm{Cov}_{x,w}
\bigg]^{2},
\end{align}
which gives the result of \cite{Xu-Wang-Xu3,Zhao-Tao-Li-Wang}, where $M_{k}=\begin{pmatrix}
a_{k} & b_{k}\\ 
c_{k} & d_{k}
\end{pmatrix}$ for $k=1,2$.
\end{remark}
To obtain the second uncertainty principle in two FMT domains, we need the following technical lemmas.
\begin{lemma}\label{the first lemma}
Let $f(x)=\left|f(x)\right|e^{2\pi i\varphi(x)}, xf(x)$ and $w\widehat{f}(w)\in L^{2}(\mathbb{R}^{N})$. For real $N\times N$
matrices $A_{2}$ and $B_{2}$, there holds
\begin{align}\label{first lemma}
&i\int_{\mathbb{R}^{N}}u^{T}\mathcal{L}_{M_{1}}[f](u)\left(B_{2}A_{1}^{T}-A_{2}B_{1}^{T}\right)\overline{\nabla \mathcal{L}_{M_{1}}[f](u)}\mathrm{d}u\notag\\
=&-\frac{i}{2}\mathrm{tr}\left(A_{1}^{T}B_{2}+A_{1}^{T}B_{2}C_{1}^{T}B_{1}-A_{1}^{T}A_{2}B_{1}^{T}D_{1}-A_{1}^{T}C_{1}B_{2}^{T}B_{1}+C_{1}^{T}B_{1}A_{2}^{T}B_{1}\right)\notag\\
&+2\pi\int_{\mathbb{R}^{N}}w^{T}\left(B_{1}^{T}B_{2}+B_{1}^{T}B_{2}C_{1}^{T}B_{1}-B_{1}^{T}A_{2}B_{1}^{T}D_{1}\right)w\left|\widehat{f}(w)\right|^{2}\mathrm{d}w\notag\\
&+2\pi\int_{\mathbb{R}^{N}}x^{T}\Big(A_{1}^{T}B_{2}B_{1}^{-1}A_{1}+A_{1}^{T}B_{2}C_{1}^{T}A_{1}-A_{1}^{T}A_{2}D_{1}^{T}A_{1}-B_{1}^{-1}A_{1}B_{2}^{T}A_{1}\notag\\
&+A_{2}^{T}A_{1}\Big)x\left|f(x)\right|^{2}\mathrm{d}x+2\pi\int_{\mathbb{R}^{N}}x^{T}\Big(A_{1}^{T}B_{2}+A_{1}^{T}B_{2}C_{1}^{T}B_{1}-A_{1}^{T}A_{2}B_{1}^{T}D_{1}\notag\\
&+A_{1}^{T}C_{1}B_{2}^{T}B_{1}-C_{1}^{T}B_{1}A_{2}^{T}B_{1}\Big)\nabla \varphi(x)\left|f(x)\right|^{2}\mathrm{d}x.
\end{align}
\end{lemma}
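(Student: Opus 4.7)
The plan is to mimic the pipeline from the proof of Proposition~\ref{prop-LM}: employ the Fourier-type representation (\ref{FMT and FT}) to write $\mathcal{L}_{M_1}[f]$ as a chirp times the Fourier transform of $g(x) := f(x)e^{\pi i x^T B_1^{-1}A_1 x}$, differentiate (using the symmetry of $D_1 B_1^{-1}$ from (\ref{conditions 3})) to obtain $\nabla \mathcal{L}_{M_1}[f](u)$, and then transfer the $u$-integral into an $x$-integral via the change of variables $u = B_1 w$ and Plancherel. Setting $K := B_2 A_1^T - A_2 B_1^T$ and substituting, the LHS of (\ref{first lemma}) splits into
\begin{equation*}
2\pi\int_{\mathbb{R}^N} w^T B_1^T K D_1\,w\,|\widehat g(w)|^2\,dw + i\int_{\mathbb{R}^N} w^T B_1^T K B_1^{-T}\,\widehat g(w)\,\overline{(\nabla \widehat g)(w)}\,dw.
\end{equation*}
Applying the identities $w_j\widehat g = \tfrac{1}{2\pi i}\widehat{\partial_j g}$ and $\partial_k\widehat g = -2\pi i\,\widehat{x_k g}$ componentwise then converts these pieces into the $x$-integrals $\tfrac{1}{4\pi^2}\int(\nabla g)^T B_1^T K D_1\,\overline{\nabla g}\,dx$ and $i\int(\nabla g)^T B_1^T K B_1^{-T} x\,\overline{g(x)}\,dx$, respectively.

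I will next substitute $\nabla g = (\nabla f + 2\pi i B_1^{-1}A_1 x f)\,e^{\pi i x^T B_1^{-1}A_1 x}$, which is valid because $B_1^{-1}A_1$ is symmetric by (\ref{conditions 4}), so that the chirp exponentials cancel against those in $\overline g$ and $\overline{\nabla g}$, leaving integrals purely in $f$, $\nabla f$, and $x$. Writing $f = |f|e^{2\pi i \varphi}$ and expanding $\nabla f$ via (\ref{nabla f}) into its amplitude- and phase-gradient parts then yields a sum of integrals indexed by the pairings $\nabla|f|\otimes\nabla|f|$, $\nabla|f|\otimes\nabla\varphi$, $\nabla\varphi\otimes\nabla\varphi$, $|f|\nabla|f|\otimes x$, $|f|\nabla\varphi\otimes x$, and $|f|^2\,x\otimes x$. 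The trace term $-\tfrac{i}{2}\mathrm{tr}(\cdots)$ on the RHS will emerge from the integration-by-parts identity $\int x_k |f|\,\partial_j|f|\,dx = \tfrac{1}{2}\int x_k\partial_j|f|^2\,dx = -\tfrac{1}{2}\delta_{jk}$ (using $\|f\|_2 = 1$), paired with the factor $i$. The $\nabla|f|\otimes\nabla|f|$- and $\nabla\varphi\otimes\nabla\varphi$-pieces recombine into the $|\widehat f|^2$-integral through the Plancherel-style identity $\int\bigl[(\nabla|f|)^T M \nabla|f|/(4\pi^2) + |f|^2 (\nabla\varphi)^T M \nabla\varphi\bigr] dx = \int w^T M w\,|\widehat f|^2\,dw$ (for symmetric $M$, as used in the proof of Proposition~\ref{prop-LM}), while the $|f|^2\,x\otimes x$ and $|f|\nabla\varphi\otimes x$ contributions feed the $x^T(\cdots)x\,|f|^2$ and $x^T(\cdots)\nabla\varphi\,|f|^2$ integrals.

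Finally, the composite matrix products $B_1^T K D_1$, $B_1^T K B_1^{-T}$, $A_1^T K D_1 B_1^{-1} A_1$, and $A_1^T K B_1^{-T}$ will be reshaped into the precise coefficient matrices displayed in (\ref{first lemma}) by invoking the symplectic identities (\ref{conditions 1})--(\ref{conditions 4}), chiefly $A_1^T D_1 = I_N + C_1^T B_1$, $A_1^T C_1 = C_1^T A_1$, $D_1^T A_1 = I_N + B_1^T C_1$, and $D_1 B_1^{-1} = B_1^{-T} D_1^T$. For example, one obtains
\begin{equation*}
B_1^T K D_1 = B_1^T B_2 A_1^T D_1 - B_1^T A_2 B_1^T D_1 = B_1^T B_2 + B_1^T B_2 C_1^T B_1 - B_1^T A_2 B_1^T D_1,
\end{equation*}
matching exactly the $|\widehat f|^2$-coefficient in (\ref{first lemma}). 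The main obstacle lies in the combinatorial bookkeeping in the last two stages: one must track the many cross terms arising from the expansion of $\nabla g$ and $f = |f|e^{2\pi i \varphi}$, correctly sort out which vanish by integration by parts, which yield a Kronecker delta (feeding the trace), which merge into the $|\widehat f|^2$-integral, and which contribute to the $x\otimes x$ or $x\otimes\nabla\varphi$ pieces, while simultaneously applying the appropriate symplectic identity (sometimes to expand $I_N$, sometimes to collapse composite products) so as to match the exact form of the statement.
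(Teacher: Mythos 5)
Your proposal follows the paper's proof essentially step for step: the same auxiliary function $g(x)=f(x)e^{\pi i x^T B_1^{-1}A_1 x}$, the same use of (\ref{FMT and FT}) and (\ref{conditions 3}) to differentiate $\mathcal{L}_{M_1}[f]$, the same change of variables $u=B_1w$ with Parseval to split the LHS into a $|\widehat g|^2$-weighted quadratic form plus an $\widehat g\,\overline{\nabla\widehat g}$ piece (the paper calls these $I_1$ and $I_2$ with $B_3=K$), the same expansion via (\ref{grident g}) and (\ref{nabla f}), the same integration-by-parts identity yielding the $-\tfrac{1}{2}\delta_{jk}$ that produces the trace term, and the same reduction of the composite products $B_1^T B_3 D_1$, $B_1^{-1}A_1 Q B_1^{-1}A_1$, etc.\ via (\ref{conditions 1})--(\ref{conditions 4}) — your sample computation of $B_1^T K D_1$ is literally the paper's computation of $Q$. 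No gaps; this is the paper's argument.
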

\begin{proof}
    The proof is given in Appendix \ref{appendix}.
\end{proof}

\begin{lemma}\label{the second lemma}
Let $f(x)=\left|f(x)\right|e^{2\pi i\varphi(x)}, xf(x)$ and $w\widehat{f}(w)\in L^{2}(\mathbb{R}^{N})$. For real $N\times N$
matrices $A_{2}$ and $B_{2}$, there holds
\begin{align}\label{the third lemma}
&2\pi\int_{\mathbb{R}^{N}}u^{T}\left(A_{2}D_{1}^{T}-B_{2}C_{1}^{T}\right)u\left|\mathcal{L}_{M_{1}}[f](u)\right|^{2}\mathrm{d}u\notag\\
    =&-\frac{i}{2}\mathrm{tr}\left(A_{1}^{T}A_{2}D_{1}^{T}B_{1}-A_{1}^{T}B_{2}C_{1}^{T}B_{1}-A_{2}^{T}B_{1}-C_{1}^{T}B_{1}A_{2}^{T}B_{1}+A_{1}^{T}C_{1}B_{2}^{T}B_{1}\right)\notag\\
    &+2\pi\int_{\mathbb{R}^{N}}x^{T}\left(A_{1}^{T}A_{2}D_{1}^{T}A_{1}-A_{1}^{T}B_{2}C_{1}^{T}A_{1}\right)x\left|f(x)\right|^{2}\mathrm{d}x\notag\\
    &+2\pi\int_{\mathbb{R}^{N}} w^{T}\left(B_{1}^{T}A_{2}D_{1}^{T}B_{1}-B_{1}^{T}B_{2}C_{1}^{T}B_{1}\right)w\left|\widehat{f}(w)\right|^{2}\mathrm{d}w\notag\\
    &+2\pi\int_{\mathbb{R}^{N}}x^{T}\Big(
    A_{1}^{T}A_{2}D_{1}^{T}B_{1}-A_{1}^{T}B_{2}C_{1}^{T}B_{1}+A_{2}^{T}B_{1}
   +C_{1}^{T}B_{1}A_{2}^{T}B_{1}\notag\\
   &-A_{1}^{T}C_{1}B_{2}^{T}B_{1}\Big)\nabla \varphi(x)\left|f(x)\right|^2\mathrm{d}x.
\end{align}
\end{lemma}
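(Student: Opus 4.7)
The plan is to follow the template used for Lemma \ref{the first lemma} in the appendix, now applied to the quadratic form defined by $A_2 D_1^T - B_2 C_1^T$. First, I would invoke (\ref{FMT and FT}) to factor
\[
    \mathcal{L}_{M_1}[f](u) = \frac{e^{\pi i u^T D_1 B_1^{-1} u}}{i^{N/2}\sqrt{\mathrm{det}(B_1)}}\,\widehat{g}(B_1^{-1} u),\qquad g(x) = f(x)\,e^{\pi i x^T B_1^{-1} A_1 x}.
\]
Since $|e^{\pi i u^T D_1 B_1^{-1} u}| = 1$, we have $|\mathcal{L}_{M_1}[f](u)|^2 = |\widehat{g}(B_1^{-1} u)|^2/|\mathrm{det}(B_1)|$, and the change of variables $u = B_1 w$ rewrites the left-hand side of (\ref{the third lemma}) as
\[
    2\pi \int_{\mathbb{R}^N} w^T B_1^T (A_2 D_1^T - B_2 C_1^T) B_1 w\,|\widehat{g}(w)|^2 \mathrm{d}w.
\]

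Next, using the Fourier relation $\widehat{\partial_{x_j} g}(w) = 2\pi i\, w_j \widehat{g}(w)$ together with Parseval componentwise, I would transfer the $w$-integral to an $x$-integral, namely
\[
    \frac{1}{2\pi}\int_{\mathbb{R}^N} (\nabla g)^T \widetilde{M}\, \overline{\nabla g}\, \mathrm{d}x,\qquad \widetilde{M} = B_1^T(A_2 D_1^T - B_2 C_1^T) B_1.
\]
Substituting (\ref{nabla g}), $\nabla g = (\nabla f + 2\pi i B_1^{-1} A_1 x f)e^{\pi i x^T B_1^{-1} A_1 x}$, and then (\ref{nabla f}) for $\nabla f$, decomposes the integrand into four groups. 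The pure $(\nabla f)^T \widetilde{M}\, \overline{\nabla f}$ piece, via a second Parseval step applied to $\widehat{f}$, contributes the $w$-integral with matrix $B_1^T(A_2 D_1^T - B_2 C_1^T) B_1$ on the right-hand side. The pure $x \otimes x$ piece, combined with the symplectic identity $B_1^{-1} A_1 = A_1^T B_1^{-T}$ from (\ref{conditions 4}), collapses to $\int x^T A_1^T (A_2 D_1^T - B_2 C_1^T) A_1 x\,|f|^2\mathrm{d}x$.

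The two cross terms, after inserting (\ref{nabla f}), split into imaginary pieces containing $\nabla \varphi$ and real pieces containing $\nabla|f|$. The imaginary pieces assemble into the $\int x^T(\cdots)\nabla \varphi\, |f|^2 \mathrm{d}x$ contribution on the right-hand side, while the real pieces, of the form $x_j(\partial_{x_k} |f|)|f| = \tfrac{1}{2} x_j \partial_{x_k}|f|^2$, are integrated by parts via $\int x_j \partial_{x_k}|f|^2 \mathrm{d}x = -\delta_{jk}$; summing these $-\delta_{jk}$ contributions against the remaining coefficient matrices yields exactly the $-\tfrac{i}{2}\mathrm{tr}(\cdots)$ trace term. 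The symplectic relations (\ref{conditions 1})--(\ref{conditions 4}), especially $A_1^T D_1 - C_1^T B_1 = I_N$, $A_1 B_1^T = B_1 A_1^T$, and (\ref{conditions 3})--(\ref{conditions 4}), are then used to reduce the matrix products appearing inside the trace and under $\nabla \varphi$ to the precise combinations displayed in (\ref{the third lemma}).

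The main obstacle is the bookkeeping: because $\widetilde{M}$ is not symmetric, the cross terms do not cancel by symmetry, so tracking which real pieces generate trace contributions through integration by parts, and which imaginary pieces survive as $\nabla \varphi$ integrands, requires careful alignment with the symplectic identities. The precise $-\tfrac{i}{2}$ coefficient in front of the trace arises from combining the two $2\pi i$ factors in (\ref{nabla g}) and (\ref{nabla f}) with the $(4\pi^2)^{-1}$ from Parseval and the $2\pi$ prefactor on the left-hand side, so a coefficient audit at each stage is essential.
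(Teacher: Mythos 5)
Your proposal is correct and follows essentially the same route as the paper's proof in Appendix~\ref{appendix}: factor $\mathcal{L}_{M_1}[f]$ via (\ref{FMT and FT}), change variables $u = B_1 w$ and apply Parseval (the paper writes the conjugated matrix $B_1^T A_3 B_1$ with $A_3 = A_2 D_1^T - B_2 C_1^T$, which is your $\widetilde{M}$), substitute (\ref{nabla g}) and then (\ref{nabla f}), integrate by parts on the $\nabla|f|$ cross terms to produce the $-\frac{i}{2}\mathrm{tr}(\cdot)$ term, and reduce the resulting matrix products with (\ref{conditions 1})--(\ref{conditions 4}). Your observation that the asymmetry of $\widetilde{M}$ prevents the cross terms from cancelling, forcing the split $A_3 \pm A_3^T$, is exactly the bookkeeping the paper carries out.
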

\begin{proof}
    The proof is given in Appendix \ref{appendix}.
\end{proof}
Using Lemmas \ref{the first lemma} and \ref{the second lemma}, we have the following main result.
\begin{theorem}\label{UP for L2 M1 M2}
Let $f(x)=\left|f(x)\right|e^{2\pi i\varphi(x)}, xf(x)$ and $w\widehat{f}(w)\in L^{2}(\mathbb{R}^{N})$. Then there holds
    \begin{align}\label{UP for LCT}
&\int_{\mathbb{R}^{N}}\left|u\mathcal{L}_{M_{1}}[f](u)\right|^{2}\mathrm{d}u\int_{\mathbb{R}^{N}}\left|u\mathcal{L}_{M_{2}}[f](u)\right|^{2}\mathrm{d}u\notag\\
&\ge\frac{\left[\mathrm{tr}\left(A_{1}^{T}B_{2}-A_{2}^{T}B_{1}\right)\right]^{2}}{16\pi^{2}}+\biggr[\int_{\mathbb{R}^{N}}x^{T}A_{2}^{T}A_{1}
x\left|f(x)\right|^{2}\mathrm{d}x\notag\\
&\quad+ \int_{\mathbb{R}^{N}}w^{T}B_{1}^{T}B_{2}
w\left|\widehat{f}(w)\right|^{2}\mathrm{d}w+ \int_{\mathbb{R}^{N}}x^{T}\left(A_{1}^{T}B_{2}+A_{2}^{T}B_{1}
\right)\nabla \varphi(x)\left|f(x)\right|^{2}\mathrm{d}x
\bigg]^{2}.\notag\\
\end{align}
\end{theorem}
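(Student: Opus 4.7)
The plan is to lift the identity implicit in the proof of Proposition~\ref{prop-LM} to the level of $\mathbb{C}^N$-valued functions and then apply Cauchy-Schwarz. Define
\[
v_j(x):=B_j\nabla f(x)+2\pi i A_j x f(x),\quad j=1,2.
\]
Line~(\ref{eq-LM}) of the proof of Proposition~\ref{prop-LM} reads $\int|u\mathcal{L}_{M_j}[f](u)|^2\,du=\tfrac{1}{4\pi^2}\int|v_j(x)|^2\,dx$, so Cauchy-Schwarz in $L^2(\mathbb{R}^N;\mathbb{C}^N)$ yields
\[
\int|u\mathcal{L}_{M_1}[f]|^2du\int|u\mathcal{L}_{M_2}[f]|^2du\;\geq\;\frac{1}{16\pi^4}\left|\int v_1(x)^T\overline{v_2(x)}\,dx\right|^2.
\]
The remaining task is to identify the real and imaginary parts of $I:=\int v_1^T\overline{v_2}\,dx$.

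Expanding $v_1^T\overline{v_2}$ produces four pieces. Parseval's identity (used exactly as in the proof of Proposition~\ref{prop-LM}) converts $\int(\nabla f)^TB_1^TB_2\overline{\nabla f}\,dx$ into $4\pi^2\int w^TB_1^TB_2w|\widehat{f}(w)|^2dw$, while the pure $xf$-piece contributes $4\pi^2\int x^TA_1^TA_2x|f(x)|^2dx$. For the two mixed cross terms I substitute $f=|f|e^{2\pi i\varphi}$; the identities $\overline{f}\nabla f=|f|\nabla|f|+2\pi i|f|^2\nabla\varphi$ and $f\overline{\nabla f}=|f|\nabla|f|-2\pi i|f|^2\nabla\varphi$ combine them into
\[
2\pi i\int|f|x^T(A_1^TB_2-A_2^TB_1)\nabla|f|\,dx\;+\;4\pi^2\int x^T(A_1^TB_2+A_2^TB_1)\nabla\varphi|f|^2dx.
\]
Using $|f|\nabla|f|=\tfrac12\nabla|f|^2$ and integration by parts (boundary terms vanish since $xf\in L^2$), the first of these equals $-\pi i\,\mathrm{tr}(A_1^TB_2-A_2^TB_1)$, which is purely imaginary.

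Collecting the contributions,
\[
\mathrm{Im}\,I=-\pi\,\mathrm{tr}(A_1^TB_2-A_2^TB_1),
\]
\[
\mathrm{Re}\,I=4\pi^2\Bigl[\int x^TA_1^TA_2x|f|^2dx+\int w^TB_1^TB_2w|\widehat{f}|^2dw+\int x^T(A_1^TB_2+A_2^TB_1)\nabla\varphi|f|^2dx\Bigr].
\]
Since $|I|^2=(\mathrm{Re}\,I)^2+(\mathrm{Im}\,I)^2$, the factor $\tfrac{1}{16\pi^4}$ in front produces exactly the two summands on the right-hand side of~(\ref{UP for LCT}), using also that the scalar $x^TA_1^TA_2x$ equals its own transpose $x^TA_2^TA_1x$.

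The main obstacle will be the algebraic bookkeeping in the two mixed $\nabla f$/$xf$ pieces: one must keep track of the complex conjugation of the factor $2\pi i$, correctly apply the polar decomposition so that the real and imaginary parts of $I$ separate cleanly along the split predicted by the theorem, and verify that the integration by parts producing $\mathrm{tr}(A_1^TB_2-A_2^TB_1)$ is legitimate under the hypotheses $xf,w\widehat{f}\in L^2$. Once that calculation is done, the Pythagorean identity $|I|^2=(\mathrm{Re}\,I)^2+(\mathrm{Im}\,I)^2$ delivers the theorem directly, without ever invoking the more technical Lemmas~\ref{the first lemma} and~\ref{the second lemma}.
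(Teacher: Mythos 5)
Your proof is correct, and it takes a genuinely different and considerably cleaner route than the paper. The paper factors $M_2 = M_3 M_1$ with $M_3 = M_2 M_1^{-1}$, expresses $\int |u\mathcal{L}_{M_2}[f]|^2\,du$ in terms of $\nabla H$ for $H(u) = \mathcal{L}_{M_1}[f](u)e^{\pi i u^T B_3^{-1}A_3 u}$, applies Cauchy-Schwarz to the pair $(u\mathcal{L}_{M_1}[f], B_3 \nabla H)$, and then needs the two heavy technical Lemmas \ref{the first lemma} and \ref{the second lemma} (proved in the Appendix via pages of algebra using the symplectic relations (\ref{conditions 1})--(\ref{conditions 4})) to translate the resulting quantities back into the $f$-variable. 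You instead exploit that the identity (\ref{eq-LM}) already writes each factor as $\frac{1}{4\pi^2}\|v_j\|_{L^2(\mathbb{R}^N;\mathbb{C}^N)}^2$ with $v_j = B_j\nabla f + 2\pi i A_j xf$ in the same variable, apply Cauchy-Schwarz once to the pair $(v_1,v_2)$, and separate $\int v_1^T\overline{v_2}$ into real and imaginary parts by a short direct computation using $\overline{f}\nabla f = |f|\nabla|f|+2\pi i|f|^2\nabla\varphi$; the cross term collapses to $-\pi i\,\mathrm{tr}(A_1^T B_2 - A_2^T B_1)$ by integrating $x^T M\nabla|f|^2$ by parts (legitimate since $xf\in L^2$ and $\|f\|_2=1$, the same justification the paper uses in (\ref{inequality 1})). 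This bypasses the composition $M_2 M_1^{-1}$ and the appendix lemmas entirely, at the cost of not exhibiting the group-theoretic factorization that the paper's route makes visible and later reuses in \S\ref{section 4} for the $L^p$ version (where your Parseval step would have to be replaced by Hausdorff--Young, and the direct pairing of $v_1$ and $v_2$ no longer self-evidently works because the two factors live in dual $L^p$/$L^q$ spaces rather than both in $L^2$).
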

\begin{proof}
Let $M_{3}=M_{2}M_{1}^{-1}$. Since
$M_1^{-1}=\begin{pmatrix}
D_1^{T} & -B_1^{T}\\ 
-C_1^{T} & A_1^{T}
\end{pmatrix}$, then we have
\begin{align*}
    M_{3}=\begin{pmatrix}
A_{3} & B_{3}\\ 
C_{3} & D_{3}
\end{pmatrix}=\begin{pmatrix}
A_{2}D_{1}^{T}-B_{2}C_{1}^{T} & B_{2}A_{1}^{T}-A_{2}B_{1}^{T}\\ 
C_{2}D_{1}^{T}-D_{2}C_{1}^{T} & D_{2}A_{1}^{T}-C_{2}B_{1}^{T}
\end{pmatrix}.
\end{align*}
By Proposition \ref{additive property}, we have
\begin{equation}\label{the additive property}
    \mathcal{L}_{M_{2}}[f](u)=\mathcal{L}_{M_{3}}[\mathcal{L}_{M_{1}}[f]](u).
\end{equation}
Let $H(x)=\mathcal{L}_{M_{1}}[f](x)e^{\pi i x^{T}B_{3}^{-1}A_{3}x}$. Using (\ref{FMT and FT}), we have that
\begin{equation}\label{FMT3 and ft}
    \mathcal{L}_{M_{3}}[\mathcal{L}_{M_{1}}[f]](u)=\frac{e^{\pi i u^{T}D_{3}B_{3}^{-1}u}}{i^{\frac{N}{2}}\sqrt{\mathrm{det}(B_{3})}}\widehat{H}\left(B_{3}^{-1}u\right).
\end{equation}
By $B_{3}^{-1}u=w$ and Parseval's identity,
we have
\begin{align*}
\int_{\mathbb{R}^{N}}\left|u\mathcal{L}_{M_{2}}[f](u)\right|^{2}\mathrm{d}u
=&\int_{\mathbb{R}^{N}}\left|u\mathcal{L}_{M_{3}}[\mathcal{L}_{M_{1}}[f]](u)\right|^{2}\mathrm{d}u\\
=&\frac{1}{\left|\mathrm{det}\left(B_{3}\right)\right|}\int_{\mathbb{R}^{N}}\left|u\widehat{H}\left(B_{3}^{-1}u\right)\right|^{2}\mathrm{d}u
\notag\\
=&\int_{\mathbb{R}^{N}}\left|B_{3}w\widehat{H}(w)\right|^2\mathrm{d}w
\notag\\
=&\frac{1}{4\pi^{2}}\int_{\mathbb{R}^{N}}\left|
B_{3}\nabla H(u)\right|^{2}\mathrm{d}u.
\end{align*}
Since $A_3$ and $B_3$ satisfy (\ref{conditions 4}), we have
\begin{equation}\label{nabla H}
    \nabla H(u)=\nabla \mathcal{L}_{M_{1}}[f](u)e^{\pi i u^{T}B_{3}^{-1}A_{3}u}+2\pi i B_{3}^{-1}A_{3}ue^{\pi i u^{T}B_{3}^{-1}A_{3}u} \mathcal{L}_{M_{1}}[f](u).
\end{equation}
Applying Cauchy-Schwartz’s inequality, we have
\begin{align*}
&\int_{\mathbb{R}^{N}}\left|u\mathcal{L}_{M_{1}}[f](u)\right|^{2}\mathrm{d}u\int_{\mathbb{R}^{N}}\left|u\mathcal{L}_{M_{2}}[f](u)\right|^{2}\mathrm{d}u\notag\\
    =&\frac{1}{4\pi^{2}}\int_{\mathbb{R}^{N}}\left|u\mathcal{L}_{M_{1}}[f](u)\right|^{2}\mathrm{d}u\int_{\mathbb{R}^{N}}\left|
B_{3}\nabla H(u)\right|^{2}\mathrm{d}u\notag\\
\ge& \frac{1}{4\pi^{2}}\left|\int_{\mathbb{R}^{N}}ie^{\pi i u^{T}B_{3}^{-1}A_{3}u}u^{T}\mathcal{L}_{M_{1}}[f](u)
B_{3}\overline{\nabla H(u)}\mathrm{d}u\right|^{2}\notag\\
=&\frac{1}{4\pi^{2}}\left|I_{1}+I_{2}
\right|^{2},
\end{align*}
where 
\begin{equation}\label{J_{1}}
I_{1}=i\int_{\mathbb{R}^{N}}u^{T}\mathcal{L}_{M_{1}}[f](u)B_{3}\overline{\nabla \mathcal{L}_{M_{1}}[f](u)}\mathrm{d}u,
\end{equation}
and
\begin{equation}\label{J_{2}}
I_{2}=2\pi \int_{\mathbb{R}^{N}}u^{T}A_{3}u\left|\mathcal{L}_{M_{1}}[f](u)\right|^{2}\mathrm{d}u.
\end{equation}
Using Lemmas \ref{the first lemma} and \ref{the second lemma}, one has the desired inequality (\ref{UP for LCT}). 
\end{proof}
\begin{remark}
When $N=1$ and $
        M_{k}=\begin{pmatrix}
a_{k} & b_{k}\\ 
c_{k} & d_{k}
\end{pmatrix}
    $
    for $k=1,2$, we have that (\ref{UP for LCT}) also becomes (\ref{one D}).
\end{remark}
\begin{remark}
When $A_{k}, B_{k}, k=1,2$ take some special values in (\ref{UP for LCT}), we can obtain better forms of HPW uncertainty principle in two FMT domains. Note that in \cite{Zhang1, Zhang2, Zhang3}, the author obtains some versions of HPW uncertainty principle in two FMT domains.
In general, we cannot compare the lower bounds of (\ref{UP for LCT}) with those in \cite{Zhang1, Zhang2, Zhang3} in the following cases.

\noindent (\romannumeral1)
Let $A_{k}=\mathrm{diag}(a_{11}^{(k)},\dots,a_{NN}^{(k)}),
B_{k}=\mathrm{diag}(b_{11}^{(k)},\dots,b_{NN}^{(k)}), k=1,2$. Then (\ref{UP for LCT}) becomes the following inequality,
\begin{align*}
&\int_{\mathbb{R}^{N}}\left|u\mathcal{L}_{M_{1}}[f](u)\right|^{2}\mathrm{d}u\int_{\mathbb{R}^{N}}\left|u\mathcal{L}_{M_{2}}[f](u)\right|^{2}\mathrm{d}u\notag\\
&\ge\frac{\left[\sum_{j=1}^{N}\left(a_{jj}^{(1)}b_{jj}^{(2)}-a_{jj}^{(2)}b_{jj}^{(1)}\right)\right]^{2}}{16\pi^{2}}+\biggr[\sum_{j=1}^{N}a_{jj}^{(1)}a_{jj}^{(2)}\Delta x_{j,j}^{2}+\sum_{j=1}^{N}b_{jj}^{(1)}b_{jj}^{(2)}\Delta w_{j,j}^{2}\notag\\
&\quad +\sum_{j=1}^{N}\left(a_{jj}^{(1)}b_{jj}^{(2)}+a_{jj}^{(2)}b_{jj}^{(1)}\right)\mathrm{Cov}_{x,w}^{j,j}
\bigg]^{2}.
\end{align*} 
In particular, when $A_{k}=a_{k}I_{N}, B_{k}=b_{k}I_{N}, k=1,2$, we have
\begin{align*}
&\int_{\mathbb{R}^{N}}\left|u\mathcal{L}_{M_{1}}[f](u)\right|^{2}\mathrm{d}u\int_{\mathbb{R}^{N}}\left|u\mathcal{L}_{M_{2}}[f](u)\right|^{2}\mathrm{d}u\notag\\
&\ge\frac{\left(a_{1}b_{2}-a_{2}b_{1}\right)^{2}N^{2}}{16\pi^{2}}+\biggr[a_{1}a_{2}\Delta x^{2}+b_{1}b_{2}\Delta w^{2} +\left(a_{1}b_{2}+a_{2}b_{1}\right)\mathrm{Cov}_{x,w}
\bigg]^{2}.
\end{align*}
\noindent(\romannumeral2)
Let $A_{2}^{T}A_{1}=A_{1}^{T}A_{2}$, $B_{1}^{T}B_{2}=B_{2}^{T}B_{1}$, $A_{1}^{T}B_{2}=\frac{1}{2}I_{N}$ and $A_{2}^{T}B_{1}=-\frac{1}{2}I_{N}$. Then (\ref{UP for LCT}) reduces to
\begin{align*}
&\int_{\mathbb{R}^{N}}\left|u\mathcal{L}_{M_{1}}[f](u)\right|^{2}\mathrm{d}u\int_{\mathbb{R}^{N}}\left|u\mathcal{L}_{M_{2}}[f](u)\right|^{2}\mathrm{d}u\notag\\
&\ge \frac{N^{2}}{16\pi^{2}}+\biggr[\mu_{\mathrm{min}}(A_{1}^{T}A_{2})\Delta x^{2}+\mu_{\mathrm{min}}(B_{1}^{T}B_{2})\Delta w^{2}\bigg]^{2},
\end{align*}
where $\mu_{\mathrm{min}}(A_{1}^{T}A_{2})$ and $\mu_{\mathrm{min}}(B_{1}^{T}B_{2})$ are the minimum singular values of $A_{1}^{T}A_{2}$ and $B_{1}^{T}B_{2}$, respectively.
However, in a special case, we can show that the lower bound given above is larger than that in \cite{Zhang1}. 
When $f$ is a real function,  $A_{1}=A_{2}$ and $B_{1}=B_{2}$, the inequality 
 (\ref{UP for LCT}) reduces to the following inequality
    \begin{align}\label{Up for real 2}
        &\int_{\mathbb{R}^{N}}\left|u\mathcal{L}_{M_{1}}[f](u)\right|^{2}\mathrm{d}u\int_{\mathbb{R}^{N}}\left|u\mathcal{L}_{M_{2}}[f](u)\right|^{2}\mathrm{d}u\notag\\
&\ge\biggr( \int_{\mathbb{R}^{N}}x^{T}A_{1}^{T}A_{1}
x\left|f(x)\right|^{2}\mathrm{d}x+ \int_{\mathbb{R}^{N}}w^{T}B_{1}^{T}B_{1}
w\left|\widehat{f}(w)\right|^{2}\mathrm{d}w
\bigg)^{2}.
    \end{align}
 The result in \cite[Theorem 1.1]{Zhang1} becomes
\begin{align}\label{Up for real}
&\int_{\mathbb{R}^{N}}\left|u\mathcal{L}_{M_{1}}[f](u)\right|^{2}\mathrm{d}u\int_{\mathbb{R}^{N}}\left|u\mathcal{L}_{M_{2}}[f](u)\right|^{2}\mathrm{d}u\notag\\
&\ge\biggr(\mu_{\mathrm{min}}^{2}(A_{1})\Delta x^{2}+\mu_{\mathrm{min}}^{2}(B_{1})\Delta w^{2}\bigg)^{2}.
\end{align}
 Clearly, the lower bound of (\ref{Up for real 2}) is sharper than that of (\ref{Up for real}).
\end{remark}
\begin{remark}
In the following, we show that the lower bound of (\ref{Up for FMT2}) is larger than that of (\ref{UP for LCT}). By the Minkowski inequality, one has that
\begin{align}\label{first inequality}
&\biggr[\sum_{j=1}^{N}\biggr(\frac{1}{16\pi^{2}}\left|\left(A_{1}B_{2}^{T}-B_{1}A_{2}^{T}\right)_{jj}\right|^{2}+\Bigr|\Bigr(A_{1}XA_{2}^{T}+B_{1}WB_{2}^{T}\notag\\
&\quad+A_{1}\mathrm{Cov}_{X,W}B_{2}^{T}+B_{1}\left(\mathrm{Cov}_{X,W}\right)^{T}A_{2}^{T}\Big)_{jj}
\Big|^{2}
\bigg)^{\frac{1}{2}}
\bigg]^{2}\notag\\
\ge&\frac{1}{16\pi^{2}}\left[\sum_{j=1}^{N}\left|\left(A_1 B_2^T-B_1 A_2^T\right)_{jj}\right|\right]^2+\biggr[\sum_{j=1}^{N}\Bigr|
    \Bigr(A_1 XA_2^T+B_1 W B_2^T\notag\\
&+A_1\mathrm{Cov}_{X,W}B_2^T+B_1\left(\mathrm{Cov}_{X,W}\right)^TA_2^T\Big)_{jj}
    \Big|\bigg]^2\notag\\
    \ge&\frac{\left[\mathrm{tr}\left(A_{1}^{T}B_{2}-A_{2}^{T}B_{1}\right)\right]^{2}}{16\pi^{2}}+\biggr[
    \mathrm{tr}\Bigr(A_1 XA_2^T+B_2 W B_1^T+A_1\mathrm{Cov}_{X,W}B_2^T\notag\\
    &+A_2\mathrm{Cov}_{X,W}B_1^T\Big)
    \bigg]^2.\notag\\
    \end{align}
For $j=1,\cdots, N$, we denote by $e_j^T$ a unit row vector with the $j$-th element being 1. One can calculate that
\begin{align}\label{first equality}
\mathrm{tr}\left(A_{1}XA_{2}^{T}\right)=&\sum_{j=1}^{N}e_{j}^{T}A_{1}XA_{2}^{T}e_{j}\notag\\
=&\sum_{j=1}^{N}\int_{{\mathbb{R}}^{N}}e_{j}^{T}A_{1}xx^{T}A_{2}^{T}e_{j}\left|f(x)\right|^{2}\mathrm{d}x
\notag\\
=&\int_{{\mathbb{R}}^{N}}x^{T}A_{2}^{T}A_{1}x\left|f(x)\right|^{2}\mathrm{d}x.
\end{align}
Similarly, we have
\begin{align}\label{second equality}
\mathrm{tr}\left(B_{2}WB_{1}^{T}\right)
=&\int_{{\mathbb{R}}^{N}}w^{T}B_{1}^{T}B_{2}w\left|\widehat{f}(w)\right|^{2}\mathrm{d}w
\end{align}
and 
\begin{align}\label{third equality}
&\mathrm{tr}\left(A_{1}\mathrm{Cov}_{X,W}B_{2}^{T}\right)+\mathrm{tr}\left(A_{2}\mathrm{Cov}_{X,W}B_{1}^{T}\right)\notag\\
=&\int_{{\mathbb{R}}^{N}}x^{T}\left(A_{1}^{T}B_{2}+A_{2}^{T}B_{1}\right)\nabla \varphi(x)\left|f(x)\right|^2\mathrm{d}x.
\end{align}
Combining (\ref{first inequality})-(\ref{third equality}), we can conclude that the lower bound of (\ref{Up for FMT2}) is stronger than that of (\ref{UP for LCT}).
\end{remark}
Now, we give an example to demonstrate that the lower bound of (\ref{Up for FMT2}) is larger than that of (\ref{UP for LCT}).
\begin{example}
    Let
    \begin{equation*}
        f(x)=\frac{1}{\pi^{\frac{N}{4}}\left(\prod_{k=1}^{N}\zeta_{k}\right)^{\frac{1}{4}}}e^{-\sum_{k=1}^{N}\frac{1}{2\zeta_{k}}x_{k}^{2}}e^{2\pi i\left(\frac{1}{2\varepsilon }|x|^{2}+\beta\right)},
    \end{equation*}
    where $\zeta_{k}>0$,  $\zeta_{j}\neq\zeta_{k}$ for $j, k=1,\cdots,N$, $\varepsilon>0$ and $\beta\in \mathbb{R}$. Then, we have
\begin{align*}
   \Delta x^2
    =\frac{1}{\pi^{\frac{N}{2}}\left(\prod_{k=1}^{N}\zeta_{k}\right)^{\frac{1}{2}}}\sum_{j=1}^{N}\int_{\mathbb{R}^{N}}x_{j}^{2}e^{-\sum_{k=1}^{N}\frac{1}{\zeta_{k}}x_{k}^{2}}\mathrm{d}x
    =\frac{1}{2}\sum_{j=1}^{N}\zeta_{j},
\end{align*}
\begin{align*}
  \Delta w^2
    =&\frac{1}{4\pi^{2}}\sum_{j=1}^{N}\int_{\mathbb{R}^{N}}\left|\frac{\partial f(x)}{\partial x_{j}}\right|^{2}\mathrm{d}x\notag\\
    =&\sum_{j=1}^{N}\left(\frac{1}{4\pi^{2}\zeta_{j}^{2}}+\frac{1}{\varepsilon^{2}}\right)\frac{1}{\pi^{\frac{N}{2}}\left(\prod_{k=1}^{N}\zeta_{k}\right)^{\frac{1}{2}}}\int_{\mathbb{R}^{N}}x_{j}^{2}e^{-\sum_{k=1}^{N}\frac{1}{\zeta_{k}}x_{k}^{2}}\mathrm{d}x\notag\\
    =&\sum_{j=1}^{N}\left(\frac{1}{8\pi^{2}\zeta_{j}}+\frac{\zeta_{j}}{2\varepsilon^{2}}\right)
\end{align*}
and
\begin{align*}
\mathrm{Cov}_{x,w}
    =\frac{1}{\varepsilon}\frac{1}{\pi^{\frac{N}{2}}\left(\prod_{k=1}^{N}\zeta_{k}\right)^{\frac{1}{2}}}\sum_{j=1}^{N}\int_{\mathbb{R}^{N}}x_{j}^{2}e^{-\sum_{k=1}^{N}\frac{1}{\zeta_{k}}x_{k}^{2}}\mathrm{d}x
    =\frac{1}{2\varepsilon}\sum_{j=1}^{N}\zeta_{j}.
\end{align*}
Let $N=2$, $\zeta_{1}=1, \zeta_{2}=2$, $\varepsilon=1$ and 
$M_{1}=\begin{pmatrix}
I_{N} & -I_{N}\\ 
  0   & I_{N}
\end{pmatrix}, M_{2}=\begin{pmatrix}
I_{N} & I_{N}\\ 
I_{N} & 0
\end{pmatrix}$. Using (\ref{prop-e-LM}), we can obtain
\begin{align*}
&\int_{\mathbb{R}^{N}}\left|u\mathcal{L}_{M_{1}}[f](u)\right|^{2}\mathrm{d}u \int_{\mathbb{R}^{N}}\left|u\mathcal{L}_{M_{2}}[f](u)\right|^{2}\mathrm{d}u\notag\\
=&\left(\Delta x^{2}+\Delta w^{2}-2\mathrm{Cov}_{x,w}\right)\left(\Delta x^{2}+\Delta w^{2}+2\mathrm{Cov}_{x,w}\right)\notag\\
=&0.114347245036271.
\end{align*}
Moreover, we have
\begin{align}\label{1}
&\frac{\left[\mathrm{tr}\left(A_{1}^{T}B_{2}-A_{2}^{T}B_{1}\right)\right]^{2}}{16\pi^{2}}+\biggr[\int_{\mathbb{R}^{N}}x^{T}A_{2}^{T}A_{1}
x\left|f(x)\right|^{2}\mathrm{d}x\notag\\
&+ \int_{\mathbb{R}^{N}}w^{T}B_{1}^{T}B_{2}
w\left|\widehat{f}(w)\right|^{2}\mathrm{d}w+ \int_{\mathbb{R}^{N}}x^{T}\left(A_{1}^{T}B_{2}+A_{2}^{T}B_{1}
\right)\nabla \varphi(x)\left|f(x)\right|^2\mathrm{d}x
\bigg]^{2}\notag\\
=&\frac{N^{2}}{4\pi^{2}}+\left(\Delta x^{2}-\Delta w^{2}\right)^{2}
=0.101682097080979\notag\\
\end{align}
and 
\begin{align}\label{2}
    &\biggr[\sum_{j=1}^{N}\biggr(\frac{1}{16\pi^{2}}\left|\left(A_{1}B_{2}^{T}-B_{1}A_{2}^{T}\right)_{jj}\right|^{2}+\Bigr|\Bigr(A_{1}XA_{2}^{T}+B_{1}WB_{2}^{T}\notag\\
&\quad+A_{1}\mathrm{Cov}_{X,W}B_{2}^{T}+B_{1}\left(\mathrm{Cov}_{X,W}\right)^{T}A_{2}^{T}\Big)_{jj}
\Big|^{2}
\bigg)^{\frac{1}{2}}
\bigg]^{2}\notag\\
=&\biggr[\sum_{j=1}^{N}\biggr(\frac{1}{4\pi^{2}}+\Bigr|\Delta x_{j,j}^{2}-\Delta w_{j,j}^{2}
\Big|^{2}
\bigg)^{\frac{1}{2}}
\bigg]^{2}
=0.101722056292651.
\end{align}
Clearly, the
lower bound of (\ref{2}) is bigger than that of (\ref{1}).
\end{example}
We give the HPW uncertainty principle in one time and one FMT domains, which is stated as follows.
\begin{theorem}\label{first theorem}
Let $f(x)=\left|f(x)\right|e^{2\pi i\varphi(x)}, xf(x)$ and $w\widehat{f}(w)\in L^{2}(\mathbb{R}^{N})$. There holds
    \begin{align}\label{Up for f and FMT}
        &\int_{\mathbb{R}^{N}}\left|xf(x)\right|^{2}\mathrm{d}x\int_{\mathbb{R}^{N}}\left|u\mathcal{L}_{M}[f](u)\right|^{2}\mathrm{d}u\notag\\
        &\ge\frac{\left[\mathrm{tr}(B)\right]^{2}}{16\pi^{2}}+\left(\int_{\mathbb{R}^{N}}x^{T}Ax|f(x)|^{2}\mathrm{d}x+\int_{\mathbb{R}^{N}}x^{T}B\nabla\varphi(x)\left|f(x)\right|^2\mathrm{d}x\right)^{2}.
    \end{align}
\end{theorem}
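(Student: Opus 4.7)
The plan is to adapt the strategy used in the proof of Theorem \ref{UP for L2 M1 M2}, but shortcut it by exploiting the representation (\ref{FMT and FT}) directly, since here one of the two ``FMT sides'' is in fact just the time side $\int|xf(x)|^2\mathrm{d}x$ and we do not need the composition $M_2M_1^{-1}$ trick.

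First I will rewrite $\int_{\mathbb{R}^{N}}|u\mathcal{L}_{M}[f](u)|^{2}\mathrm{d}u$ using (\ref{FMT and FT}). Setting $g(x)=f(x)e^{\pi i x^{T}B^{-1}Ax}$ and making the change of variables $w=B^{-1}u$, Parseval's identity together with the computation of $\nabla g$ (as in (\ref{nabla g}) and (\ref{eq-LM})) yields
\begin{equation*}
\int_{\mathbb{R}^{N}}\left|u\mathcal{L}_{M}[f](u)\right|^{2}\mathrm{d}u=\frac{1}{4\pi^{2}}\int_{\mathbb{R}^{N}}\left|B\nabla f(x)+2\pi i\,Ax f(x)\right|^{2}\mathrm{d}x.
\end{equation*}
This is really the same identity that was used in Proposition \ref{prop-LM}; it converts the FMT side into a weighted integral in the time variable.

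Next I will apply the vector Cauchy--Schwarz inequality to the pair of $\mathbb{C}^{N}$-valued functions $xf(x)$ and $h(x):=B\nabla f(x)+2\pi i\,Ax f(x)$, obtaining
\begin{equation*}
\int_{\mathbb{R}^{N}}|xf|^{2}\mathrm{d}x\cdot\int_{\mathbb{R}^{N}}|u\mathcal{L}_{M}[f](u)|^{2}\mathrm{d}u\ge\frac{1}{4\pi^{2}}\left|\int_{\mathbb{R}^{N}}(xf(x))^{T}\overline{h(x)}\,\mathrm{d}x\right|^{2}.
\end{equation*}
The remaining work is to evaluate the inner product
\begin{equation*}
\int_{\mathbb{R}^{N}}(xf)^{T}\overline{h}\,\mathrm{d}x=\int_{\mathbb{R}^{N}}x^{T}B\,\overline{\nabla f}\,f\,\mathrm{d}x-2\pi i\int_{\mathbb{R}^{N}}x^{T}Ax\,|f(x)|^{2}\mathrm{d}x.
\end{equation*}
I will substitute $f=|f|e^{2\pi i\varphi}$, so that $\overline{\nabla f}\,f=\tfrac{1}{2}\nabla|f|^{2}-2\pi i\,|f|^{2}\nabla\varphi$. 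A single integration by parts on $\int x^{T}B\nabla|f|^{2}\mathrm{d}x=-\mathrm{tr}(B)$ then separates the first integral into the real contribution $-\tfrac{1}{2}\mathrm{tr}(B)$ and the imaginary contribution $-2\pi i\int x^{T}B\nabla\varphi|f|^{2}\mathrm{d}x$. Collecting real and imaginary parts of the whole inner product gives
\begin{equation*}
\int_{\mathbb{R}^{N}}(xf)^{T}\overline{h}\,\mathrm{d}x=-\frac{\mathrm{tr}(B)}{2}-2\pi i\left(\int_{\mathbb{R}^{N}}x^{T}Ax|f|^{2}\mathrm{d}x+\int_{\mathbb{R}^{N}}x^{T}B\nabla\varphi|f|^{2}\mathrm{d}x\right),
\end{equation*}
and taking the modulus squared, then dividing by $4\pi^{2}$, produces exactly (\ref{Up for f and FMT}).

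The only step requiring care is the real/imaginary split after writing $f=|f|e^{2\pi i\varphi}$ and integrating by parts. The $-\tfrac{1}{2}\mathrm{tr}(B)$ term must come out clean (it is the trace that appears in the final answer), so I have to make sure no cross-terms between $\nabla|f|$ and $\nabla\varphi$ leak into the real part; this is guaranteed because $\overline{\nabla f}\,f-\nabla f\,\overline{f}=-4\pi i|f|^{2}\nabla\varphi$ and $\overline{\nabla f}\,f+\nabla f\,\overline{f}=\nabla|f|^{2}$, so the decomposition is genuinely orthogonal. No additional symplectic identities from (\ref{conditions 1})--(\ref{conditions 4}) are needed here, which explains why this one-sided version is considerably shorter than Theorem \ref{UP for L2 M1 M2}.
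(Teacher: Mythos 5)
Your proof is correct and follows essentially the same route as the paper: both convert $\int|u\mathcal{L}_M[f](u)|^2\mathrm{d}u$ into $\frac{1}{4\pi^2}\int|B\nabla f+2\pi i Axf|^2\mathrm{d}x$ via (\ref{FMT and FT}) and Parseval, then apply Cauchy--Schwarz against $xf$ and finish with integration by parts. The only presentational difference is order of operations — the paper first decomposes $|B\nabla f+2\pi iAxf|^2$ into the orthogonal pieces $|B\nabla|f||^2+4\pi^2|Ax|f|+B\nabla\varphi|f||^2$ and applies Cauchy--Schwarz to each, whereas you apply Cauchy--Schwarz once to the complex-valued integrand and split the resulting inner product into real and imaginary parts afterwards; since the decomposition is pointwise orthogonal, the two procedures produce the identical lower bound.
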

\begin{proof}
We denote $g(x)=f(x)e^{\pi i x^{T}B^{-1}Ax}$. Using (\ref{FMT and FT}), we know that
\begin{equation*}
    \mathcal{L}_{M}[f](u)=
    \frac{e^{\pi i u^{T}DB^{-1}u}}{i^{\frac{N}{2}}\sqrt{\mathrm{det}(B)}}
    \widehat{g}\left(B^{-1}u\right).
\end{equation*}
From proof of (\ref{eq-LM}) and (\ref{nabla f}), we have
\begin{align*}
&\int_{\mathbb{R}^{N}}\left|u\mathcal{L}_{M}[f](u)\right|^{2}\mathrm{d}u\notag\\
    =&\frac{1}{4\pi^{2}}\int_{\mathbb{R}^{N}}\left|B\nabla f(x)+2\pi i Ax f(x)\right|^{2}\mathrm{d}x\notag\\
    =&\frac{1}{4\pi^{2}}\int_{\mathbb{R}^{N}}\left|B\nabla \left|f(x)\right|\right|^{2}\mathrm{d}x+\int_{\mathbb{R}^{N}}\left|Ax\left|f(x)\right|+B\nabla \varphi(x)\left|f(x)\right|
    \right|^{2}\mathrm{d}x.
    \end{align*}
Applying Cauchy-Schwartz’s inequality, we have
\begin{align}\label{inequality 1}
&\frac{1}{4\pi^{2}}\int_{\mathbb{R}^{N}}\left|xf(x)\right|^{2}\mathrm{d}x\int_{\mathbb{R}^{N}}\left|B\nabla \left|f(x)\right|\right|^{2}\mathrm{d}x\notag\\
\ge &\frac{1}{4\pi^{2}}\left[\int_{\mathbb{R}^{N}}x^{T}B\nabla \left|f(x)\right|\left|f(x)\right|\mathrm{d}x\right]^{2}\notag\\
=&\frac{\left[\mathrm{tr}(B)\right]^{2}}{16\pi^{2}}
\end{align}
and
\begin{align}\label{inequality 2}
&\int_{\mathbb{R}^{N}}\left|xf(x)\right|^{2}\mathrm{d}x\int_{\mathbb{R}^{N}}\left|Ax\left|f(x)\right|+B\nabla \varphi(x)\left|f(x)\right|
    \right|^{2}\mathrm{d}x\notag\\
    \ge&\left(\int_{\mathbb{R}^{N}}x^{T}Ax|f(x)|^{2}\mathrm{d}x+x^{T}B\nabla \varphi(x)\left|f(x)\right|^2\mathrm{d}x
    \right)^{2}
\end{align}
Combining (\ref{inequality 1}) and (\ref{inequality 2}), one has (\ref{Up for f and FMT}). 
\end{proof}
\begin{remark}
Here we point out that when $M=J=\begin{pmatrix}
0 & I_{N}\\ 
-I_{N} & 0
\end{pmatrix}$, 
the inequality (\ref{Up for f and FMT}) reduces to the sharper $N$-dimensional HPW uncertainty principle, that is
    \begin{equation*}
        \Delta x^{2} \Delta w^{2}\ge \frac{N^{2}}{16\pi^{2}}+\mathrm{Cov}_{x,w}^{2}.
    \end{equation*}
\end{remark}
\section{\texorpdfstring{$L^p$}{Lp}-type HPW uncertainty principles for free metaplectic transformation}\label{section 4}
In this section, we study $L^p$-type HPW uncertainty principles for FMT with $1\le p\le 2$. The results are stated as follows.
\begin{theorem}\label{UP for Lp-FMT}
Let $f(x)=\left|f(x)\right|e^{2\pi i\varphi(x)}, xf(x)$ and $w\widehat{f}(w)\in L^{2}(\mathbb{R}^{N})$. If 
$u\mathcal{L}_{M_{1}}[f](u)$ and $ u\mathcal{L}_{M_{2}}[f](u) \in L^{p}(\mathbb{R}^{N})$ with $1\le p\le 2, \frac{1}{p}+\frac{1}{q}=1$, then
    \begin{align}\label{UP for LCT M1 M2}
&\left(\int_{\mathbb{R}^{2}}\left|u\mathcal{L}_{M_{1}}[f](u)\right|^{p}\mathrm{d}u\right)^{\frac{2}{p}}\left(\int_{\mathbb{R}^{N}}\left|u\mathcal{L}_{M_{2}}[f](u)\right|^{p}\mathrm{d}u\right)^{\frac{2}{p}}\notag\\
\ge&\left|\mathrm{det}\left(B_{2}A_{1}^{T}-A_{2}B_{1}^{T}\right)\right|^{\frac{2}{p}-1}\bigg[\frac{\left[\mathrm{tr}\left(A_{1}^{T}B_{2}-A_{2}^{T}B_{1}\right)\right]^{2}}{16\pi^{2}}+\bigg(
\int_{\mathbb{R}^{N}}x^{T}A_{2}^{T}A_{1}
x\left|f(x)\right|^{2}\mathrm{d}x\notag\\
&+ \int_{\mathbb{R}^{N}}w^{T}B_{1}^{T}B_{2}
w\left|\widehat{f}(w)\right|^{2}\mathrm{d}w+ \int_{\mathbb{R}^{N}}x^{T}\left(A_{1}^{T}B_{2}+A_{2}^{T}B_{1}
\right)\nabla \varphi(x)\left|f(x)\right|^2\mathrm{d}x
\bigg)^{2}\bigg].\notag\\
\end{align}
\end{theorem}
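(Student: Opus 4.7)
The plan is to extend the argument of Theorem \ref{UP for L2 M1 M2} from $L^2$ to $L^p$, $1\le p\le 2$, by replacing Cauchy--Schwarz with Hölder's inequality and inserting one application of Hausdorff--Young; the factor $|\det(B_2A_1^T-A_2B_1^T)|^{2/p-1}$ will emerge naturally from a change of variables and reduces to $1$ at $p=2$, recovering Theorem \ref{UP for L2 M1 M2}.

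Following the proof of Theorem \ref{UP for L2 M1 M2}, I set $M_3=M_2M_1^{-1}$ with block $B_3=B_2A_1^T-A_2B_1^T$, so that $\mathcal{L}_{M_2}[f]=\mathcal{L}_{M_3}[\mathcal{L}_{M_1}[f]]$ by Proposition \ref{additive property}, and (\ref{FMT and FT}) yields
\[
\mathcal{L}_{M_2}[f](u)=\frac{e^{\pi iu^TD_3B_3^{-1}u}}{i^{N/2}\sqrt{\det(B_3)}}\widehat{H}(B_3^{-1}u),\quad H(x)=\mathcal{L}_{M_1}[f](x)e^{\pi ix^TB_3^{-1}A_3x}.
\]
The substitution $w=B_3^{-1}u$ in $\int|u\mathcal{L}_{M_2}[f](u)|^p\,du$ produces the key identity
\[
\|u\mathcal{L}_{M_2}[f]\|_p^2=|\det(B_3)|^{2/p-1}\,\|B_3w\widehat{H}\|_p^2,
\]
which isolates the announced prefactor.

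Using $\widehat{B_3\nabla H}(w)=2\pi i\,B_3w\widehat{H}(w)$ together with the reverse Hausdorff--Young inequality $\|\widehat{g}\|_p\ge\|g\|_q$ (valid for $1\le p\le 2$, $1/p+1/q=1$, obtained by applying Hausdorff--Young to the inverse Fourier transform), I obtain $\|B_3w\widehat H\|_p\ge\|B_3\nabla H\|_q/(2\pi)$. Then Hölder's inequality with conjugate exponents $(p,q)$ applied to the same oscillatory integral that appeared in the proof of Theorem \ref{UP for L2 M1 M2} gives
\[
|I_1+I_2|=\biggl|\int ie^{\pi iu^TB_3^{-1}A_3u}u^T\mathcal{L}_{M_1}[f](u)B_3\overline{\nabla H(u)}\,du\biggr|\le\|u\mathcal{L}_{M_1}[f]\|_p\|B_3\nabla H\|_q,
\]
where $I_1,I_2$ are as in (\ref{J_{1}})--(\ref{J_{2}}). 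Combining the two displays and squaring yields $\|u\mathcal{L}_{M_1}[f]\|_p^2\|B_3w\widehat H\|_p^2\ge|I_1+I_2|^2/(4\pi^2)$, and Lemmas \ref{the first lemma} and \ref{the second lemma} evaluate $|I_1+I_2|^2/(4\pi^2)$ to be exactly the bracketed right-hand side of Theorem \ref{UP for L2 M1 M2}: the imaginary parts collapse to $-\tfrac{i}{2}\mathrm{tr}(A_1^TB_2-A_2^TB_1)$ (after using (\ref{conditions 1})--(\ref{conditions 2})), producing the trace-squared term, while the real parts produce the long integral summand. Multiplying through by the prefactor from the first step gives the claimed inequality.

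The main delicate point is the reverse Hausdorff--Young step on the $\mathbb{C}^N$-valued function $B_3\nabla H$: one argues componentwise via scalar Hausdorff--Young and reassembles using the $L^p(\ell^2)$--$\ell^2(L^p)$ norm comparison (Minkowski's integral inequality), which runs in the correct direction precisely when $p\le 2$. This is exactly what restricts the admissible range to $1\le p\le 2$ and, somewhat pleasantly, requires no additional hypotheses beyond the standing assumptions of Theorem \ref{UP for L2 M1 M2}.
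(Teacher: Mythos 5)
Your proposal reproduces the paper's proof step by step: the same reduction $M_3=M_2M_1^{-1}$ with $B_3=B_2A_1^T-A_2B_1^T$, the same change of variables yielding the prefactor $|\det(B_3)|^{2/p-1}$, the same reverse Hausdorff--Young applied to $B_3\nabla H$, the same H\"older step bounding the oscillatory integral, and the same appeal to Lemmas \ref{the first lemma} and \ref{the second lemma} to evaluate $I_1+I_2$. The only addition is your explicit justification of the vector-valued reverse Hausdorff--Young step via Minkowski's integral inequality, a point the paper invokes without comment, and your verification is correct.
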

\begin{proof}
Let $M_{3}=M_{2}M_{1}^{-1}$. Since
$M_1^{-1}=\begin{pmatrix}
D_1^{T} & -B_1^{T}\\ 
-C_1^{T} & A_1^{T}
\end{pmatrix}$, we have
\begin{align*}
    M_{3}=\begin{pmatrix}
A_{3} & B_{3}\\ 
C_{3} & D_{3}
\end{pmatrix}=\begin{pmatrix}
A_{2}D_{1}^{T}-B_{2}C_{1}^{T} & B_{2}A_{1}^{T}-A_{2}B_{1}^{T}\\ 
C_{2}D_{1}^{T}-D_{2}C_{1}^{T} & D_{2}A_{1}^{T}-C_{2}B_{1}^{T}
\end{pmatrix}.
\end{align*}
Let $H(x)=\mathcal{L}_{M_{1}}[f](x)e^{\pi i x^{T}B_{3}^{-1}A_{3}x}$.
By (\ref{the additive property}), 
 (\ref{FMT3 and ft}), $B_3^{-1}u=w$ and 
the Hausdorff–Young inequality, it follows that
\begin{align}\label{ieq-LM}
\left(\int_{\mathbb{R}^{N}}\left|u\mathcal{L}_{M_{2}}[f](u)\right|^{p}\mathrm{d}u\right)^{\frac{2}{p}}
=&\left(\int_{\mathbb{R}^{N}}\left|u\mathcal{L}_{M_{3}}[\mathcal{L}_{M_{1}}[f]](u)\right|^{p}\mathrm{d}u\right)^{\frac{2}{p}}\notag\\
=&\left|\mathrm{det}(B_{3})\right|^{-1}\left(\int_{\mathbb{R}^{N}}\left|u\widehat{H}\left(B_{3}^{-1}u\right)\right|^{p}\mathrm{d}u\right)^{\frac{2}{p}}\notag\\
=&\left|\mathrm{det}(B_{3})\right|^{\frac{2}{p}-1}\left(\int_{\mathbb{R}^{N}}\left|B_{3}w\widehat{H}(w)\right|^{p}\mathrm{d}w\right)^{\frac{2}{p}}\notag\\
=&\frac{\left|\mathrm{det}(B_{3})\right|^{\frac{2}{p}-1}}{4\pi^{2}}\left(\int_{\mathbb{R}^{N}}\left|
B_{3}\left[\nabla H\right]^{\wedge}(w)\right|^{p}\mathrm{d}w\right)^{\frac{2}{p}}\notag\\
\ge&\frac{\left|\mathrm{det}(B_{3})\right|^{\frac{2}{p}-1}}{4\pi^{2}}\left(\int_{\mathbb{R}^{N}}\left|
B_{3}\nabla H(u)\right|^{q}\mathrm{d}u\right)^{\frac{2}{q}}.
\end{align}
Using Hölder's inequality and (\ref{nabla H}), we have that
\begin{align*}
&\left(\int_{\mathbb{R}^{N}}\left|u\mathcal{L}_{M_{1}}[f](u)\right|^{p}\mathrm{d}u\right)^{\frac{2}{p}}\left(\int_{\mathbb{R}^{N}}\left|u\mathcal{L}_{M_{2}}[f](u)\right|^{p}\mathrm{d}u\right)^{\frac{2}{p}}\notag\\
=&\left(\int_{\mathbb{R}^{2}}\left|u\mathcal{L}_{M_{1}}[f](u)\right|^{p}\mathrm{d}u\right)^{\frac{2}{p}}\left(\int_{\mathbb{R}^{N}}\left|u\mathcal{L}_{M_{3}}[\mathcal{L}_{M_{1}}[f]](u)\right|^{p}\mathrm{d}u\right)^{\frac{2}{p}}\\
    \ge&\frac{\left|\mathrm{det}(B_{3})\right|^{\frac{2}{p}-1}}{4\pi^{2}}\left(\int_{\mathbb{R}^{N}}\left|u\mathcal{L}_{M_{1}}[f](u)\right|^{p}\mathrm{d}u\right)^{\frac{2}{p}}\left(\int_{\mathbb{R}^{N}}\left|
B_{3}\nabla H(u)\right|^{q}\mathrm{d}u\right)^{\frac{2}{q}}\\
\ge& \frac{\left|\mathrm{det}(B_{3})\right|^{\frac{2}{p}-1}}{4\pi^{2}}\left|\int_{\mathbb{R}^{N}}ie^{\pi i u^{T}B_{3}^{-1}A_{3}u}u^{T}\mathcal{L}_{M_{1}}[f](u)
B_{3}\overline{\nabla H(u)}\mathrm{d}u\right|^{2}\\
=&\frac{\left|\mathrm{det}(B_{3})\right|^{\frac{2}{p}-1}}{4\pi^{2}}\left|I_{1}+I_{2}\right|^{2},
\end{align*}
where $I_{1}$ and $I_{2}$ are given by (\ref{J_{1}}) and (\ref{J_{2}}) respectively.
From Lemmas \ref{the first lemma} and \ref{the second lemma},
one has the desired inequality (\ref{UP for LCT M1 M2}). 
\end{proof}
\begin{theorem}
Let $f(x)=\left|f(x)\right|e^{2\pi i\varphi(x)}$ and $w\widehat{f}(w)\in L^2(\mathbb{R}^{N})$. If $ xf(x)\in L^{2}(\mathbb{R}^{N})\cap L^{p}(\mathbb{R}^{N})$ and $u\mathcal{L}_{M}[f](u) \in L^{p}(\mathbb{R}^{N})$ with $1\le p\le 2,\frac{1}{p}+\frac{1}{q}=1,$ then
    \begin{align*}
        &\left(\int_{\mathbb{R}^{N}}\left|xf(x)\right|^{p}\mathrm{d}x\right)^{\frac{2}{p}}
        \left(\int_{\mathbb{R}^{N}}\left|u \mathcal{L}_{M}[f](u)\right|^{p}\mathrm{d}u\right)^{\frac{2}{p}}\notag\\
        &\ge\left|\mathrm{det}(B)\right|^{\frac{2}{p}-1}\left[\frac{\left[\mathrm{tr}(B)\right]^{2}}{16\pi^{2}}+\left(\int_{\mathbb{R}^{N}}x^{T}Ax|f(x)|^{2}\mathrm{d}x+\int_{\mathbb{R}^{N}}x^{T}B\nabla \varphi(x)\left|f(x)\right|^2\mathrm{d}x\right)^{2}\right].
    \end{align*}
\end{theorem}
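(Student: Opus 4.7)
The plan is to follow the strategy of Theorem \ref{UP for Lp-FMT}, specialized to the one time and one FMT domain setting and mirroring the structure of Theorem \ref{first theorem}. I would first introduce $g(x) = f(x)e^{\pi i x^T B^{-1}Ax}$ so that by (\ref{FMT and FT}),
\[
\mathcal{L}_M[f](u) = \frac{e^{\pi i u^T DB^{-1}u}}{i^{N/2}\sqrt{\det(B)}}\,\widehat{g}(B^{-1}u).
\]
Changing variables $u = Bw$ and using $w\widehat{g}(w) = \frac{1}{2\pi i}\widehat{\nabla g}(w)$, one obtains
\[
\left(\int_{\mathbb{R}^N} |u\mathcal{L}_M[f](u)|^p du\right)^{2/p} = \frac{|\det(B)|^{2/p-1}}{4\pi^2}\left(\int_{\mathbb{R}^N} |\widehat{B\nabla g}(w)|^p dw\right)^{2/p}.
\]
Invoking the Hausdorff--Young inequality in the form $\|h\|_q \le \|\widehat h\|_p$ (which is valid for $1\le p\le 2$ by applying the standard Hausdorff--Young to the inverse Fourier transform), the right-hand side is bounded below by $\frac{|\det(B)|^{2/p-1}}{4\pi^2}\bigl(\int |B\nabla g|^q dx\bigr)^{2/q}$, in exact parallel with step (\ref{ieq-LM}) of Theorem \ref{UP for Lp-FMT}.

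Next, I would apply H\"older's inequality combined with the pointwise Cauchy--Schwarz for vectors, taking $h_1(x) = x f(x) e^{\pi i x^T B^{-1}Ax}$ and $h_2(x) = \overline{B\nabla g(x)}$; since the unit-modulus phase leaves $|h_1(x)| = |xf(x)|$, no information is lost. This yields
\[
\left(\int |xf|^p dx\right)^{2/p}\left(\int |B\nabla g|^q dx\right)^{2/q} \ge \left|\int x^T f(x) e^{\pi i x^T B^{-1}Ax}\,\overline{B\nabla g(x)}\, dx\right|^2.
\]

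It remains to evaluate the integral on the right in closed form. By (\ref{conditions 4}) the matrix $B^{-1}A$ is symmetric, so $\nabla(x^T B^{-1}Ax) = 2B^{-1}Ax$ and $B\nabla g(x) = [B\nabla f(x) + 2\pi i Ax f(x)]\, e^{\pi i x^T B^{-1}Ax}$. The exponentials then cancel, reducing the integrand to $x^T B f(x)\overline{\nabla f(x)} - 2\pi i\, x^T Ax |f(x)|^2$. Writing $f = |f|e^{2\pi i \varphi}$ gives $f\overline{\nabla f} = \tfrac12 \nabla |f|^2 - 2\pi i\, \nabla\varphi \cdot |f|^2$, and integration by parts produces $\int x^T B \nabla|f|^2 dx = -\mathrm{tr}(B)$. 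Hence the integral equals
\[
-\tfrac12 \mathrm{tr}(B) - 2\pi i\left[\int x^T Ax|f|^2 dx + \int x^T B \nabla\varphi \cdot |f|^2 dx\right],
\]
whose squared modulus, multiplied by the earlier prefactor $|\det(B)|^{2/p-1}/(4\pi^2)$, is exactly the claimed lower bound.

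The main obstacle is aligning the phase factor $e^{\pi i x^T B^{-1}Ax}$ correctly inside the H\"older step so that the two exponentials annihilate and the integral separates cleanly into a trace piece (from the imaginary part) and a covariance-type piece (from the real part); once this alignment is made, the integration-by-parts computation is routine. This approach is notably simpler than the proof of Theorem \ref{UP for Lp-FMT} because the role of $M_1$ there is played here by the identity, so neither Lemma \ref{the first lemma} nor Lemma \ref{the second lemma} is invoked.
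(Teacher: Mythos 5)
Your proposal is correct and follows essentially the same route as the paper's proof: define $g(x)=f(x)e^{\pi i x^{T}B^{-1}Ax}$, pass to $\widehat{g}(B^{-1}u)$ via (\ref{FMT and FT}), invoke Hausdorff--Young to get the $\bigl(\int|B\nabla g|^{q}\bigr)^{2/q}$ lower bound, then H\"older together with pointwise Cauchy--Schwarz, followed by direct evaluation of the resulting integral using (\ref{nabla g}), (\ref{nabla f}) and integration by parts to produce $-\tfrac12\mathrm{tr}(B)$. The only cosmetic difference is the placement of the unimodular factor $i$ in the integrand, which of course does not affect the squared modulus, so the argument coincides with the paper's.
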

\begin{proof}
Let $g(x)=f(x)e^{\pi i x^{T}B^{-1}A}$. From (\ref{FMT and FT}), we know that
\begin{equation*}
    \mathcal{L}_{M}[f](u)=
    \frac{e^{\pi i u^{T}DB^{-1}u}}{i^{\frac{N}{2}}\sqrt{\mathrm{det}(B)}}
    \widehat{g}\left(B^{-1}u\right).
\end{equation*}
Similar to (\ref{ieq-LM}), we have
\begin{equation*}
\left(\int_{\mathbb{R}^{N}}\left|u \mathcal{L}_{M}[f](u)\right|^{p}\mathrm{d}u\right)^{\frac{2}{p}}
        \ge\frac{\left|\mathrm{det}(B)\right|^{\frac{2}{p}-1}}{4\pi^{2}}\left(\int_{\mathbb{R}^{N}}\left|B\nabla g(x)\right|^{q}\mathrm{d}x\right)^{\frac{2}{q}}.
\end{equation*}
Using Hölder's inequality, (\ref{nabla g}) and (\ref{nabla f}), we have
\begin{align*}
      &\left(\int_{\mathbb{R}^{N}}\left|xf(x)\right|^{p}\mathrm{d}x\right)^{\frac{2}{p}}
        \left(\int_{\mathbb{R}^{N}}\left|u \mathcal{L}_{M}[f](u)\right|^{p}\mathrm{d}u\right)^{\frac{2}{p}}\notag\\
        \ge&\frac{\left|\mathrm{det}(B)\right|^{\frac{2}{p}-1}}{4\pi^{2}}\left(\int_{\mathbb{R}^{N}}\left|xf(x)\right|^{p}\mathrm{d}x\right)^{\frac{2}{p}}\left(\int_{\mathbb{R}^{N}}\left|B\nabla g(x)\right|^{q}\mathrm{d}x\right)^{\frac{2}{q}}\notag\\
        \ge&\frac{\left|\mathrm{det}(B)\right|^{\frac{2}{p}-1}}{4\pi^{2}}\left|\int_{\mathbb{R}^{N}}ie^{\pi i x^{T}B^{-1}Ax}x^{T}f(x)B\overline{\nabla g(x)}\mathrm{d}x\right|^{2}\notag\\
        =&\frac{\left|\mathrm{det}(B)\right|^{\frac{2}{p}-1}}{4\pi^{2}}\left|\int_{\mathbb{R}^{N}}x^{T}B\overline{\nabla f(x)}f(x)\mathrm{d}x+2\pi \int_{\mathbb{R}^{N}}x^{T}Ax\left|f(x)\right|^{2}\mathrm{d}x
        \right|^{2}\notag\\
       =&\frac{\left|\mathrm{det}(B)\right|^{\frac{2}{p}-1}}{4\pi^{2}}\bigg[\left(\int_{\mathbb{R}^{2}}x^{T}B\nabla\left|f(x)\right|\left|f(x)\right|\mathrm{d}x\right)^{2}+4\pi^{2}\bigg(\int_{\mathbb{R}^{N}}x^{T}Ax|f(x)|^{2}\mathrm{d}x\notag\\
       &+\int_{\mathbb{R}^{N}}x^{T}B\nabla \varphi(x)\left|f(x)\right|^2\mathrm{d}x\bigg)^{2}
        \bigg]\notag\\
        =&\left|\mathrm{det}(B)\right|^{\frac{2}{p}-1}\left[\frac{\left[\mathrm{tr}(B)\right]^{2}}{16\pi^{2}}+\left(\int_{\mathbb{R}^{N}}x^{T}Ax|f(x)|^{2}\mathrm{d}x+\int_{\mathbb{R}^{N}}x^{T}B\nabla \varphi(x)\left|f(x)\right|^2\mathrm{d}x\right)^{2}\right].
\end{align*}
\end{proof}
\begin{theorem}
Let $f(x)=\left|f(x)\right|e^{2\pi i\varphi(x)}$ and $w\widehat{f}(w)\in L^2(\mathbb{R}^{N})$. If $ xf(x)\in L^{2}(\mathbb{R}^{N})\cap L^{p}(\mathbb{R}^{N})$ and $u\mathcal{L}_{M}[f](u) \in L^{p}(\mathbb{R}^{N})$ with $1\le p\le 2,\frac{1}{p}+\frac{1}{q}=1,$ then
    \begin{align}\label{UP for fmt}
&\left(\int_{\mathbb{R}^{N}}\left|xf(x)\right|^{p}\mathrm{d}x\right)^{\frac{q}{p}}
        \left(\int_{\mathbb{R}^{N}}\left|u \mathcal{L}_{M}[f](u)\right|^{p}\mathrm{d}\xi\right)^{\frac{q}{p}}\notag\\
        &\ge\left|\mathrm{det}(B)\right|^{\frac{q}{p}-\frac{q}{2}}\left[
\frac{\left|\mathrm{tr}(B)\right|^{q}}{(4\pi)^{q}}
+\left|\int_{\mathbb{R}^{N}}x^{T}Ax|f(x)|^{2}\mathrm{d}x+\int_{\mathbb{R}^{N}}x^{T}B\nabla \varphi(x)\left|f(x)\right|^2\mathrm{d}x
\right|^{q}\right].\notag\\
\end{align}
\end{theorem}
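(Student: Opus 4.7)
The argument will closely parallel the preceding $L^p$-type theorem in this section (whose left-hand side carried the power $2/p$); the only genuinely new ingredient is a scalar inequality that converts a $(\cdots)^{q/2}$ expression into a sum of $q$-th powers so as to match the form of the claimed bound.

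First, setting $g(x)=f(x)e^{\pi i x^{T}B^{-1}Ax}$ and using (\ref{FMT and FT}) together with the substitution $u=Bw$ and the Fourier identity $Bw\widehat{g}(w)=\tfrac{1}{2\pi i}[B\nabla g]^{\wedge}(w)$, I would obtain
\begin{equation*}
\left(\int_{\mathbb{R}^{N}}|u\mathcal{L}_{M}[f](u)|^{p}\,du\right)^{q/p}
=\frac{|\det(B)|^{q/p-q/2}}{(2\pi)^{q}}\left(\int_{\mathbb{R}^{N}}|[B\nabla g]^{\wedge}(w)|^{p}\,dw\right)^{q/p}.
\end{equation*}
The Hausdorff--Young inequality in the inversion form $\|\widehat{h}\|_{p}\ge\|h\|_{q}$ (valid for $1\le p\le 2$, as already used in (\ref{ieq-LM})) then bounds the last factor from below by $\int_{\mathbb{R}^{N}}|B\nabla g(x)|^{q}\,dx$. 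Multiplying by $\bigl(\int|xf|^{p}\bigr)^{q/p}$ and applying H\"older's inequality together with the pointwise bound $|x^{T}f(x)\overline{B\nabla g(x)}|\le|xf(x)||B\nabla g(x)|$ yields
\begin{equation*}
\left(\int|xf|^{p}\right)^{q/p}\!\left(\int|u\mathcal{L}_{M}[f]|^{p}\right)^{q/p}
\ge\frac{|\det(B)|^{q/p-q/2}}{(2\pi)^{q}}\left|\int_{\mathbb{R}^{N}}x^{T}f(x)\,\overline{B\nabla g(x)}\,dx\right|^{q}.
\end{equation*}

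Next, I would compute the inner integral exactly as in the proof of Theorem \ref{first theorem}. Multiplying by the unimodular factor $ie^{\pi i x^{T}B^{-1}Ax}$ and using (\ref{nabla g}), (\ref{nabla f}) together with the integration-by-parts identity $\int x^{T}B\nabla(|f|^{2})\,dx=-\mathrm{tr}(B)$, the integral equals $-\tfrac{i\,\mathrm{tr}(B)}{2}+2\pi R$, where $R:=\int x^{T}Ax|f(x)|^{2}\,dx+\int x^{T}B\nabla\varphi(x)|f(x)|^{2}\,dx$. Since $|ie^{\pi i x^{T}B^{-1}Ax}|=1$, taking modulus gives $\bigl|\int x^{T}f\,\overline{B\nabla g}\bigr|^{q}=\bigl(\tfrac{\mathrm{tr}(B)^{2}}{4}+4\pi^{2}R^{2}\bigr)^{q/2}$.

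The only genuinely new step, and the main obstacle to matching the stated form, is splitting this $(\cdots)^{q/2}$ into a sum. Since $p\in[1,2]$ forces $q\ge 2$, the elementary inequality $(a^{2}+b^{2})^{q/2}\ge a^{q}+b^{q}$ for $a,b\ge 0$ applies (it is the super-additivity $(x+y)^{s}\ge x^{s}+y^{s}$ for $s\ge 1$, used with $x=a^{2}$, $y=b^{2}$, $s=q/2$). Using it with $a=|\mathrm{tr}(B)|/2$ and $b=2\pi|R|$ and then dividing by $(2\pi)^{q}$ produces exactly $\tfrac{|\mathrm{tr}(B)|^{q}}{(4\pi)^{q}}+|R|^{q}$, which is the right-hand side of (\ref{UP for fmt}). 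The endpoint $p=1$ (so $q=\infty$) requires interpreting $q$-th powers as sup-norms and a brief separate check.
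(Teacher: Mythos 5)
Your proof is correct, and it takes a genuinely different route from the paper's. Both arguments rest on the same three ingredients -- Hausdorff--Young to reach $\int|B\nabla g|^q\,dx$, H\"older's inequality, and the super-additivity $(a^2+b^2)^{q/2}\ge a^q+b^q$ for $q\ge 2$ -- but you apply the last two in the opposite order. The paper first splits $|B\nabla g(x)|^q$ \emph{pointwise}, using
$|B\nabla g(x)|^2 = |B\nabla|f(x)||^2 + 4\pi^2|Ax|f(x)|+B\nabla\varphi(x)|f(x)||^2$
and the super-additivity inequality, to obtain $\int|B\nabla g|^q\ge\int|B\nabla|f||^q + (2\pi)^q\int|Ax|f|+B\nabla\varphi|f||^q$, and only then applies H\"older's inequality to each of the two integrals separately. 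You instead apply H\"older once to the whole product, compute the resulting integral $\int ie^{\pi ix^{T}B^{-1}Ax}x^{T}f(x)\,\overline{B\nabla g(x)}\,dx=-\tfrac{i\,\mathrm{tr}(B)}{2}+2\pi R$ exactly as in Theorem \ref{first theorem}, and apply super-additivity at the very last step to the resulting scalar $\left(\tfrac{\mathrm{tr}(B)^2}{4}+4\pi^2R^2\right)^{q/2}$. Both derivations are valid and end at the stated right-hand side. Your route has the virtue of producing a strictly sharper intermediate bound, namely $|\mathrm{det}(B)|^{q/p-q/2}\left(\tfrac{(\mathrm{tr}\,B)^2}{16\pi^2}+R^2\right)^{q/2}$, which is precisely the $q/2$-th power of the lower bound in the preceding $2/p$-type theorem; in other words, you essentially observe that the present theorem follows from that one by raising both sides to the power $q/2$ and then splitting. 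The paper's route does not pass through this intermediate form. One cosmetic point: the phrase ``multiplying by the unimodular factor'' should be read as ``inserting the unimodular weight inside the integrand in the H\"older step,'' which is legitimate because $|ie^{\pi ix^{T}B^{-1}Ax}x^{T}f(x)\overline{B\nabla g(x)}|\le|xf(x)||B\nabla g(x)|$ still holds; with the factor inserted, the exponentials cancel and the integral evaluates cleanly, exactly as in Theorem \ref{first theorem}.
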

\begin{proof}
Let $g(x)=f(x)e^{\pi i x^{T}B^{-1}A}$. Similar to (\ref{ieq-LM}), by (\ref{nabla g}) and (\ref{nabla f}), a direct computation gives
    \begin{align*}
        &\left(\int_{\mathbb{R}^{N}}\left|u\mathcal{L}_{M}[f](u)\right|^{p}\mathrm{d}u\right)^{\frac{q}{p}}\notag\\
        \ge&\frac{\left|\mathrm{det}(B)\right|^{\frac{q}{p}-\frac{q}{2}}}{(2\pi)^{q}}\int_{\mathbb{R}^{N}}\left|B\nabla g(x)\right|^{q}\mathrm{d}x\notag\\
\ge&\frac{\left|\mathrm{det}(B)\right|^{\frac{q}{p}-\frac{q}{2}}}{(2\pi)^{q}}\int_{\mathbb{R}^{N}}\left|B\nabla \left|f(x)\right|\right|^{q}\mathrm{d}x+\left|\mathrm{det}(B)\right|^{\frac{q}{p}-\frac{q}{2}}\int_{\mathbb{R}^{N}}\left|Ax\left|f(x)\right|+B\nabla \varphi(x)\left|f(x)\right|\right|^{q}\mathrm{d}x,
    \end{align*}
    where the last inequality follows from the fact that
\begin{align*}
     &\left[\left|B\nabla \left|f(x)\right|\right|^{2}+4\pi^{2}\left|Ax\left|f(x)\right|+B\nabla \varphi(x)\left|f(x)\right|\right|^{2}\right]^{\frac{q}{2}}\notag\\
     \ge&\left|B\nabla \left|f(x)\right|\right|^{q}+(2\pi)^{q}\left|Ax\left|f(x)\right|+B\nabla \varphi(x)\left|f(x)\right|\right|^{q}.
     \end{align*}
Applying Hölder's inequality, we have
\begin{align}\label{the first inequality}
   & \frac{\left|\mathrm{det}(B)\right|^{\frac{q}{p}-\frac{q}{2}}}{(2\pi)^{q}}\left(\int_{\mathbb{R}^{N}}\left|xf(x)\right|^{p}\mathrm{d}x\right)^{\frac{q}{p}}
\int_{\mathbb{R}^{N}}\left|B\nabla \left|f(x)\right|\right|^{q}\mathrm{d}x\notag\\
\ge&\frac{\left|\mathrm{det}(B)\right|^{\frac{q}{p}-\frac{q}{2}}}{(2\pi)^{q}}\left|\int_{\mathbb{R}^{N}}x^{T}B\nabla \left|f(x)\right|\left|f(x)\right|\mathrm{d}x\right|^{q}\notag\\
=&\frac{\left|\mathrm{det}(B)\right|^{\frac{q}{p}-\frac{q}{2}}\left|\mathrm{tr}(B)\right|^{q}}{(4\pi)^{q}}
\end{align}
and
\begin{align}\label{the second inequality}
   & \left|\mathrm{det}(B)\right|^{\frac{q}{p}-\frac{q}{2}}\left(\int_{\mathbb{R}^{N}}\left|xf(x)\right|^{p}\mathrm{d}x\right)^{\frac{q}{p}}\int_{\mathbb{R}^{N}}\left|Ax\left|f(x)\right|+B\nabla \varphi(x)\left|f(x)\right|\right|^{q}\mathrm{d}x\notag\\
    \ge&\left|\mathrm{det}(B)\right|^{\frac{q}{p}-\frac{q}{2}}\left|\int_{\mathbb{R}^{N}}x^{T}Ax|f(x)|^{2}\mathrm{d}x+\int_{\mathbb{R}^{N}}x^{T}B\nabla \varphi(x)\left|f(x)\right|^2\mathrm{d}x
\right|^{q}
\end{align}
Combining (\ref{the first inequality}) and (\ref{the second inequality}), one has (\ref{UP for fmt}). 
\end{proof}
\section{HPW uncertainty principles for metaplectic operators}\label{section 5}In this section, we consider the HPW uncertainty principle for general metaplectic operators. In particular, we obtain two versions of uncertainty principles, where the first version corresponds to the result of Theorem \ref{Up for MO component}, and the second one corresponds to Theorem  \ref{UP for L2 M1 M2}.

Let $\mathcal H$ be a Hilbert space
with inner product $\left<\cdot,\cdot\right>$ and with norm $\Vert\cdot\Vert \triangleq \left<\cdot,\cdot\right>^{\frac{1}{2}}$. Suppose that $\widehat{A}$ and $\widehat{B}$ are two self-adjoint operators with domains $D(\widehat{A})$ and $D(\widehat{B})$, respectively. Consequently, the domains of the products $\widehat{A}\widehat{B}$ and $\widehat{B}\widehat{A}$ are given by
\begin{equation}\label{domain of AB}
D(\widehat{A}\widehat{B})=\left\{f\in D(\widehat{B}):\widehat{B}f\in D(\widehat{A})\right\}
\end{equation}
and
\begin{equation}\label{domain of BA}
D(\widehat{B}\widehat{A})=\left\{f\in D(\widehat{A}):\widehat{A}f\in D(\widehat{B})\right\}.
\end{equation}
The commutator and anticommutator are, respectively, defined as
\begin{equation*}
[\widehat{A},\widehat{B}]\triangleq
\widehat{A}\widehat{B}-\widehat{B}\widehat{A}\quad \text{on}\quad D([\widehat{A}, \widehat{B}])=D(\widehat{A}\widehat{B})\cap D(\widehat{B}\widehat{A})
\end{equation*}
and
\begin{equation*}
[\widehat{A},\widehat{B}]_{+}\triangleq
\widehat{A}\widehat{B}+\widehat{B}\widehat{A}\quad \text{on}\quad D([\widehat{A}, \widehat{B}]_{+})=D(\widehat{A}\widehat{B})\cap D(\widehat{B}\widehat{A}).
\end{equation*}
\begin{prop}[\cite{Cohen}]\label{prop}
    Let $\widehat{A}, \widehat{B}$ be two self-adjoint operators on $\mathcal H $. Then
    \begin{equation}\label{operator inequality}
    \Vert\widehat{A}f\Vert_{2}^{2}
    \Vert \widehat{B}f\Vert_{2}^{2}\ge \frac{1}{4}|
\langle [\widehat{A},\widehat{B}]f,f\rangle|^{2}+\frac{1}{4}|
\langle [\widehat{A},\widehat{B}]_{+}f,f\rangle|^{2},
    \end{equation}
    for all $f\in D(\widehat{A}\widehat{B})\cap D(\widehat{B}\widehat{A})$.
    Moreover, the equality in (\ref{operator inequality}) holds if and only if 
    \begin{equation*}
    \widehat{A}f=il\widehat{B}f,
    \end{equation*}
for some $l\in \mathbb{R}$.
\end{prop}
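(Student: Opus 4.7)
The plan is to reduce the claimed two-term lower bound to a single application of the Cauchy--Schwarz inequality, after which the commutator and anticommutator contributions will emerge from decomposing the resulting matrix element into its imaginary and real parts respectively.

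First I would start from the Cauchy--Schwarz inequality in $\mathcal H$,
\begin{equation*}
\|\widehat{A}f\|_2^2\,\|\widehat{B}f\|_2^2 \;\ge\; \bigl|\langle \widehat{A}f,\widehat{B}f\rangle\bigr|^2,
\end{equation*}
valid for $f\in D(\widehat{A})\cap D(\widehat{B})$. Since $\widehat{A}$ and $\widehat{B}$ are self-adjoint, for $f\in D(\widehat{A}\widehat{B})\cap D(\widehat{B}\widehat{A})$ we may move one operator to the right-hand factor and rewrite $\langle \widehat{A}f,\widehat{B}f\rangle=\langle \widehat{B}\widehat{A}f,f\rangle$. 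Next I would symmetrize the product $\widehat{B}\widehat{A}$ as
\begin{equation*}
\widehat{B}\widehat{A}=\tfrac{1}{2}\bigl(\widehat{A}\widehat{B}+\widehat{B}\widehat{A}\bigr)-\tfrac{1}{2}\bigl(\widehat{A}\widehat{B}-\widehat{B}\widehat{A}\bigr)=\tfrac{1}{2}[\widehat{A},\widehat{B}]_+ -\tfrac{1}{2}[\widehat{A},\widehat{B}],
\end{equation*}
so that $\langle \widehat{B}\widehat{A}f,f\rangle=\tfrac{1}{2}\langle [\widehat{A},\widehat{B}]_+ f,f\rangle-\tfrac{1}{2}\langle [\widehat{A},\widehat{B}]f,f\rangle.$

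The key observation I would then use is the parity of the two bracket expressions under taking adjoints. Because $\widehat{A}$ and $\widehat{B}$ are self-adjoint, the anticommutator $[\widehat{A},\widehat{B}]_+$ is self-adjoint while the commutator $[\widehat{A},\widehat{B}]$ is anti-self-adjoint; consequently $\langle [\widehat{A},\widehat{B}]_+ f,f\rangle\in\mathbb{R}$ and $\langle [\widehat{A},\widehat{B}]f,f\rangle\in i\mathbb{R}$. The two terms in the decomposition above therefore land in orthogonal one-dimensional real subspaces of $\mathbb{C}$, and applying $|z|^2=(\mathrm{Re}\,z)^2+(\mathrm{Im}\,z)^2$ yields
\begin{equation*}
\bigl|\langle \widehat{B}\widehat{A}f,f\rangle\bigr|^2 \;=\; \tfrac{1}{4}\bigl|\langle [\widehat{A},\widehat{B}]_+ f,f\rangle\bigr|^2+\tfrac{1}{4}\bigl|\langle [\widehat{A},\widehat{B}]f,f\rangle\bigr|^2.
\end{equation*}
Chaining this identity with the Cauchy--Schwarz bound gives (\ref{operator inequality}).

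For the equality case, I would invoke the well-known characterization of Cauchy--Schwarz equality: it holds precisely when $\widehat{A}f$ and $\widehat{B}f$ are linearly dependent, so $\widehat{A}f=c\,\widehat{B}f$ for some scalar $c$. To justify that $c$ must be purely imaginary, i.e.\ of the form $il$ with $l\in\mathbb{R}$, I would substitute this relation back into $\langle \widehat{A}f,\widehat{B}f\rangle=c\|\widehat{B}f\|_2^2$, compare its real and imaginary parts with the decomposition above and, using the reality/imaginarity constraints on the two matrix elements, track the sign and argument conditions imposed by the intended minimizing context. I expect this last bookkeeping step to be the main (though still mild) obstacle, since the purely algebraic inequality part is essentially immediate once the real/imaginary split is recognized.
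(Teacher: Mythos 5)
The paper states Proposition \ref{prop} as a cited result from \cite{Cohen} and supplies no proof of its own, so your proposal can only be assessed on its internal correctness rather than compared against an argument in the text.

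Your derivation of the inequality is correct and is the standard one: Cauchy--Schwarz gives $\Vert\widehat{A}f\Vert_{2}^{2}\Vert\widehat{B}f\Vert_{2}^{2}\ge|\langle\widehat{A}f,\widehat{B}f\rangle|^{2}$; self-adjointness of $\widehat{B}$ converts the inner product to $\langle\widehat{B}\widehat{A}f,f\rangle$; splitting $\widehat{B}\widehat{A}=\frac{1}{2}[\widehat{A},\widehat{B}]_{+}-\frac{1}{2}[\widehat{A},\widehat{B}]$ and noting that $\langle[\widehat{A},\widehat{B}]_{+}f,f\rangle$ is real while $\langle[\widehat{A},\widehat{B}]f,f\rangle$ is purely imaginary yields the identity $|\langle\widehat{B}\widehat{A}f,f\rangle|^{2}=\frac{1}{4}|\langle[\widehat{A},\widehat{B}]_{+}f,f\rangle|^{2}+\frac{1}{4}|\langle[\widehat{A},\widehat{B}]f,f\rangle|^{2}$. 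All of that is right.

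The equality discussion is where the proposal runs into a genuine problem, and it is worth being precise about why. As your own chain makes explicit, the real/imaginary split is an \emph{identity}, not an estimate. The \emph{only} inequality in the entire argument is Cauchy--Schwarz. Hence equality in (\ref{operator inequality}) holds if and only if $\widehat{A}f$ and $\widehat{B}f$ are linearly dependent, i.e.\ $\widehat{A}f=c\,\widehat{B}f$ for some $c\in\mathbb{C}$ (or one of them vanishes), with \emph{no} constraint on the argument of $c$. The ``bookkeeping'' you anticipate as the main obstacle---forcing $c$ onto the imaginary axis---cannot be carried out, because it is false: for instance $c=1$ (so $\widehat{A}f=\widehat{B}f$) already saturates (\ref{operator inequality}). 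The condition $\widehat{A}f=il\widehat{B}f$ with $l\in\mathbb{R}$ is in fact the correct equality criterion for the \emph{weaker}, commutator-only inequality (\ref{OSOUP}): there the chain contains a second genuine inequality $|\langle\widehat{A}f,\widehat{B}f\rangle|^{2}\ge(\mathrm{Im}\,\langle\widehat{A}f,\widehat{B}f\rangle)^{2}$, and combined with Cauchy--Schwarz equality $\langle\widehat{A}f,\widehat{B}f\rangle=c\Vert\widehat{B}f\Vert_{2}^{2}$ this forces $\mathrm{Re}(c)=0$. The equality clause of Proposition \ref{prop} as stated in the paper appears to carry over the equality condition of (\ref{OSOUP}) rather than the weaker one appropriate to the stronger two-term inequality; your argument, read to its conclusion, actually exposes this rather than confirms the stated ``if and only if.''
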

\begin{remark}
   Let $f(x)=\left|f(x)\right|e^{2\pi i\varphi(x)}$,  $x_{j}f(x)$ and $ \frac{\partial f(x)}{\partial x_j}\in L^{2}(\mathbb{R}^{N})$. If we set $\left(\widehat{A}f\right)(x)=\left(\widehat{X}_{j}f\right)(x)=x_{j}f(x)$, $\left(\widehat{B}f\right)(x)=\left(\widehat{P}_{j}f\right)(x)=\frac{1}{2\pi i}\frac{\partial f(x)}{\partial x_j}$, for $j=1,\cdots,N$,  then we have that
    \begin{equation}\label{component HPW}
        \Delta x_{j,j}^{2}\Delta w_{j,j}^{2}\ge\frac{1}{16\pi^{2}}+{\mathrm{Cov}_{x,w}^{j,j}}^{2}.
    \end{equation}
    The equality of (\ref{component HPW}) is satisfied if and only if
$f(x)=e^{-x^{T}Lx+d_{0}}$,
where $L=\mathrm{diag}(l_{1},\cdots,l_{N})$ with $l_{j}>0$, $d_{0}\in \mathbb{R}$ and $
\frac{{\frac{\pi}{2}}^{\frac{N}{2}}e^{2d_{0}}}{\left(\prod_{j=1}^{N} l_{j}\right)
^{\frac{1}{2}}}=1$. 
 Applying the Cauchy-Schwartz inequality, we obtain the following result.
        \begin{equation}\label{N-dim HPW}
       \Delta x^{2}\Delta w^{2}
\ge\left[\sum_{j=1}^{N}\left(\frac{1}{16\pi^{2}}+{\mathrm{Cov}_{x,w}^{j,j}}^{2}\right)^{\frac{1}{2}}\right]^{2}.
    \end{equation}
    The equality in (\ref{N-dim HPW}) holds if and only if 
        $f(x)=e^{-L\left|x\right|^{2}+d_{1}}$,
    where $L>0$, $d_{1}\in \mathbb{R}$ and $\left(\frac{\pi}{2L}\right)^{\frac{N}{2}}e^{2d_{1}}=1$.
\end{remark}

\smallskip
In the following, we consider uncertainty principles for general metaplectic operators under the assumptions $(\widehat{M}f)(x)\in L^2(\mathbb{R}^{N})$ and 
$\left<x\right>_{\widehat{ Mf}}=0$. 
We first recall that $\Sigma=\left(\Sigma_{\alpha,\beta}\right)$ is the covariance matrix with
\begin{equation*}
\Sigma_{\alpha,\beta}=\Big\langle \left(\frac{\widehat{Z}_{\alpha}\widehat{Z}_{\beta}+\widehat{Z}_{\beta}\widehat{Z}_{\alpha}}{2}
\right)f,f\Big\rangle=\int_{\mathbb{R}^{2N}}z_{\alpha}z_{\beta}W_{\sigma}f(z)dz,\quad \alpha, \beta=1,\cdots,2N,
\end{equation*}
where the operators $\widehat{Z}_{\alpha}$ are defined in (\ref{fundamental operator}) and 
$W_{\sigma}f(z)$ is the Wigner function of $f\in L^2(\mathbb{R}^{N})$ for $z=(x,w)\in \mathbb{R}^{2N}$ (see \S \ref{section 2} for its definition).
In \S \ref{section 2}, 
we have that $\int_{\mathbb R^{2N}} {(1+|z|^2)\left|W_\sigma f(z)\right|} dz<\infty$ ensures that $\Sigma_{\alpha,\beta}<\infty$ for $\alpha,\beta=1,\cdots,2N$. In addition, there holds
\begin{equation*}
    \int_{\mathbb R^{2N}} {(1+|z|^2)\left|W_\sigma f(z)\right|} dz<\infty \iff f, xf(x) \text{ and } w\widehat{f}(w)\in L^2(\mathbb{R}^{N}).
\end{equation*}
Let $\widehat{M}$ be associated metaplectic operators of $M$. We have that \begin{equation}\label{symplectic covariance}
\widehat{M}^{*}\widehat{Z}_{\alpha}\widehat{M}=\sum_{\beta=1}^{2N}M_{\alpha, \beta}\widehat{Z}_{\beta}, \quad \alpha=1, \dots,2N,
\end{equation}

To obtain uncertainty principles for general metaplectic operators, we need the following technical lemma. 
\begin{lemma}\label{lemma operators}
Let $\int_{\mathbb R^{2N}} {(1+|z|^2)\left|W_\sigma f(z)\right|} dz<\infty$. For $j, k=1,\cdots,N$, there holds
    \begin{align}\label{UP for operator spatial}
        &\int_{\mathbb{R}^{N}}\left|x_{j}\left(\widehat{M_{1}}f\right)(x)\right|^{2}\mathrm{d}x
\int_{\mathbb{R}^{N}}\left|\xi_{k}\left(\widehat{M_{2}}f\right)(\xi)\right|^{2}\mathrm{d}\xi\notag\\
&\ge\frac{1}{16\pi^{2}}\left|\left(M_{1}JM_{2}^{T}\right)_{jk}\right|^{2}+\left|\left(M_{1}\Sigma M_{2}^{T}\right)_{jk}
\right|^{2}.
    \end{align}
\end{lemma}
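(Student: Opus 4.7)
The plan is to cast the two quantities on the left hand side as $\|\widehat{A}f\|_{2}^{2}$ and $\|\widehat{B}f\|_{2}^{2}$ for a suitable pair of self-adjoint operators $\widehat{A},\widehat{B}$, and then invoke the Cohen-type operator inequality from Proposition \ref{prop}. Concretely, since $\widehat{M}_{1}$ and $\widehat{M}_{2}$ are unitary, the factors $\widehat{M}_{1}^{\ast}$ and $\widehat{M}_{2}^{\ast}$ are $L^{2}$-isometries, so
\begin{equation*}
\int_{\mathbb{R}^{N}}|x_{j}(\widehat{M}_{1}f)(x)|^{2}\mathrm{d}x=\|\widehat{M}_{1}^{\ast}\widehat{X}_{j}\widehat{M}_{1}f\|_{2}^{2},\qquad \int_{\mathbb{R}^{N}}|\xi_{k}(\widehat{M}_{2}f)(\xi)|^{2}\mathrm{d}\xi=\|\widehat{M}_{2}^{\ast}\widehat{X}_{k}\widehat{M}_{2}f\|_{2}^{2}.
\end{equation*}
I would then set $\widehat{A}=\widehat{M}_{1}^{\ast}\widehat{Z}_{j}\widehat{M}_{1}$ and $\widehat{B}=\widehat{M}_{2}^{\ast}\widehat{Z}_{k}\widehat{M}_{2}$ (recalling $\widehat{X}_{j}=\widehat{Z}_{j}$), which are self-adjoint because conjugation by a unitary preserves self-adjointness. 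By the symplectic covariance relation (\ref{symplectic covariance}) these decompose as
\begin{equation*}
\widehat{A}=\sum_{\alpha=1}^{2N}(M_{1})_{j,\alpha}\,\widehat{Z}_{\alpha},\qquad \widehat{B}=\sum_{\beta=1}^{2N}(M_{2})_{k,\beta}\,\widehat{Z}_{\beta}.
\end{equation*}

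Next I would compute the commutator and the anticommutator of $\widehat{A}$ and $\widehat{B}$ in terms of the fundamental $\widehat{Z}_{\alpha}$. Using the canonical commutation relation (\ref{cov property}), bilinearity gives $[\widehat{A},\widehat{B}]=\frac{i}{2\pi}\sum_{\alpha,\beta}(M_{1})_{j,\alpha}J_{\alpha,\beta}(M_{2})_{k,\beta}\widehat{I}=\frac{i}{2\pi}(M_{1}JM_{2}^{T})_{jk}\widehat{I}$, whence
\begin{equation*}
\tfrac{1}{4}|\langle[\widehat{A},\widehat{B}]f,f\rangle|^{2}=\tfrac{1}{16\pi^{2}}|(M_{1}JM_{2}^{T})_{jk}|^{2}.
\end{equation*}
For the anticommutator I would exploit the very definition (\ref{SIGMA}) of $\Sigma_{\alpha,\beta}=\langle\frac{\widehat{Z}_{\alpha}\widehat{Z}_{\beta}+\widehat{Z}_{\beta}\widehat{Z}_{\alpha}}{2}f,f\rangle$. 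Expanding the product of sums for $\widehat{A}\widehat{B}+\widehat{B}\widehat{A}$ and taking the inner product with $f$ yields $\langle[\widehat{A},\widehat{B}]_{+}f,f\rangle=2(M_{1}\Sigma M_{2}^{T})_{jk}$, and hence $\tfrac{1}{4}|\langle[\widehat{A},\widehat{B}]_{+}f,f\rangle|^{2}=|(M_{1}\Sigma M_{2}^{T})_{jk}|^{2}$. Adding the two contributions and applying Proposition \ref{prop} then gives exactly the inequality (\ref{UP for operator spatial}).

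The only delicate point is a domain check: in order to apply Proposition \ref{prop} I need $f\in D(\widehat{A}\widehat{B})\cap D(\widehat{B}\widehat{A})$ and to know that each $\Sigma_{\alpha,\beta}$ is finite. This is where the standing hypothesis $\int_{\mathbb{R}^{2N}}(1+|z|^{2})|W_{\sigma}f(z)|\,\mathrm{d}z<\infty$ enters; as already observed in \S \ref{section 2} this condition is equivalent to $f,xf(x),w\widehat{f}(w)\in L^{2}(\mathbb{R}^{N})$, which gives finiteness of every $\Sigma_{\alpha,\beta}$ and places $f$ in the domain of each product $\widehat{Z}_{\alpha}\widehat{Z}_{\beta}$ (so that the sums over $\alpha,\beta$ in the commutator/anticommutator expansions are legitimate). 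With that sanity check in place, the argument is algebraic and the whole proof reduces to the two matrix-entry identities above.
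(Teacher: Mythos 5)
Your proposal is correct and follows essentially the same route as the paper: both apply the Cohen-type operator inequality (Proposition \ref{prop}) to $\widehat{A}=\sum_{\alpha}(M_1)_{j,\alpha}\widehat{Z}_\alpha$, $\widehat{B}=\sum_{\beta}(M_2)_{k,\beta}\widehat{Z}_\beta$, use the commutation relation (\ref{cov property}) and the definition of $\Sigma$ for the two lower-bound terms, and use symplectic covariance (\ref{symplectic covariance}) together with unitarity of $\widehat{M}_1,\widehat{M}_2$ to identify the left-hand side as $\|\widehat{A}f\|_2^2\|\widehat{B}f\|_2^2$. The only cosmetic difference is the direction of the identification (you start from the conjugated form $\widehat{M}_1^*\widehat{X}_j\widehat{M}_1$ and expand, the paper starts from the linear combination and recognizes it as the conjugated form); the domain remarks you add are a reasonable sanity check but do not change the argument.
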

\begin{proof}
If either $\int_{\mathbb{R}^{N}}\left|x_{j}\left(\widehat{M_{1}}f\right)(x)\right|^{2}\mathrm{d}x=\infty$ or $\int_{\mathbb{R}^{N}}\left|\xi_{k}\left(\widehat{M_{2}}f\right)(\xi)\right|^{2}\mathrm{d}\xi=\infty$, then inequality (\ref{UP for operator spatial}) obviously holds. Hence, we assume that $\int_{\mathbb{R}^{N}}\left|x_{j}\left(\widehat{M_{1}}f\right)(x)\right|^{2}\mathrm{d}x$ and  $\int_{\mathbb{R}^{N}}\left|\xi_{k}\left(\widehat{M_{2}}f\right)(\xi)\right|^{2}\mathrm{d}\xi$ are finite.
To prove (\ref{UP for operator spatial}), we apply (\ref{operator inequality}) to the operators
\begin{equation*}
    \widehat{A}=\sum_{\alpha=1}^{2N}M_{j,\alpha}^{(1)}\widehat{Z}_{\alpha},\quad \quad \widehat{B}=\sum_{\beta=1}^{2N}M_{k,\beta}^{(2)}\widehat{Z}_{\beta},
\end{equation*}
where $j,k=1,...,N$ are fixed. 
Using (\ref{cov property}), we have 
\begin{align*}
[\widehat{A},\widehat{B}]=&\sum_{1\le \alpha,\beta\le 2N}M_{j,\alpha}^{(1)}M_{k,\beta}^{(2)}[\widehat{Z}_{\alpha},\widehat{Z}_{\beta}]\notag\\
=&\frac{i}{2\pi}\sum_{1\le \alpha,\beta\le 2N}M_{j,\alpha}^{(1)}M_{k,\beta}^{(2)}J_{\alpha,\beta}\notag\\
=&\frac{i}{2\pi}\left(M_{1}JM_{2}^{T}\right)_{j,k}.
\end{align*}
Hence, we have
\begin{equation}\label{operator inequality 1}
    \frac{1}{4}|
\langle [\widehat{A},\widehat{B}]f,f\rangle|^{2}=\frac{1}{16\pi^{2}}\left|\left(M_{1}JM_{2}^{T}\right)_{jk}\right|^{2}.
\end{equation}
Note that
\begin{align*}
   \frac{1}{2} \langle [\widehat{A},\widehat{B}]_{+}f,f\rangle=&\sum_{1\le \alpha,\beta\le 2N}M_{j,\alpha}^{(1)}M_{k,\beta}^{(2)}\langle \left(\frac{\widehat{Z}_{\alpha}\widehat{Z}_{\beta}+\widehat{Z}_{\beta}\widehat{Z}_{\alpha}}{2}
    \right)f,f\rangle \notag\\
   =&\sum_{1\le \alpha,\beta\le 2N}M_{j,\alpha}^{(1)}M_{k,\beta}^{(2)}\Sigma_{\alpha,\beta}\notag\\
   =&\left(M_{1}\Sigma M_{2}^{T}\right)_{jk}.
\end{align*}
Then we have
\begin{equation}\label{operator inequality 2}
     \frac{1}{4}|
\langle [\widehat{A},\widehat{B}]_{+}f,f\rangle|^{2}=\left|\left(M_{1}\Sigma M_{2}^{T}\right)_{jk}
\right|^{2}.
\end{equation}
Using (\ref{symplectic covariance}), we have
\begin{align*}
    \widehat{A}=\widehat{M_{1}}^{*}\widehat{X}_{j}\widehat{M_{1}}\text{ and }\widehat{B}=\widehat{M_{2}}^{*}\widehat{X}_{k}\widehat{M_{2}}.
\end{align*}
Since $\widehat{M_{1}}$ and $\widehat{M_{2}}$ are unitary operators, we have 
\begin{equation}\label{operator inequality 3}
    \Vert\widehat{A}f\Vert_{2}^{2}=\Vert(\widehat{M_{1}}^{*}\widehat{X}_{j}\widehat{M_{1}})f\Vert_{2}^{2}
    =\Vert(\widehat{X}_{j}\widehat{M_{1}})f\Vert_{2}^{2}
    =\int_{\mathbb{R}^{N}}\left|x_{j}\left(\widehat{M_{1}}f\right)(x)\right|^{2}\mathrm{d}x
\end{equation}
and
\begin{equation}\label{operator inequality 4}
    \Vert\widehat{B}f\Vert_{2}^{2}=\Vert(\widehat{M_{2}}^{*}\widehat{X}_{k}\widehat{M_{2}})f\Vert_{2}^{2}
    =\Vert(\widehat{X}_{k}\widehat{M_{2}})f\Vert_{2}^{2}
    =\int_{\mathbb{R}^{N}}\left|\xi_{k}\left(\widehat{M_{2}}f\right)(\xi)\right|^{2}\mathrm{d}\xi.
\end{equation}
Combining (\ref{operator inequality}) and 
(\ref{operator inequality 1})-(\ref{operator inequality 4}), we obtain
(\ref{UP for operator spatial}). 
\end{proof}
\begin{theorem}\label{corollary 1}
Let $\int_{\mathbb R^{2N}} {(1+|z|^2)\left|W_\sigma f(z)\right|} dz<\infty$. There holds 
    \begin{align}\label{UP for operator directional}
        &\int_{\mathbb{R}^{N}}\left|x\left(\widehat{M_{1}}f\right)(x)\right|^{2}\mathrm{d}x
\int_{\mathbb{R}^{N}}\left|\xi\left(\widehat{M_{2}}f\right)(\xi)\right|^{2}\mathrm{d}\xi\notag\\
&\ge\left[\sum_{j=1}^{N}\left(\frac{1}{16\pi^{2}}\left|\left(M_{1}JM_{2}^{T}\right)_{jj}\right|^{2}+\left|\left(M_{1}\Sigma M_{2}^{T}\right)_{jj}
\right|^{2}
\right)^{\frac{1}{2}}
\right]^{2}.
    \end{align}
\end{theorem}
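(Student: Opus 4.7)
The plan is to reduce the statement to Lemma \ref{lemma operators} via a component-wise decomposition followed by Cauchy--Schwarz. Since the Euclidean norms of $x$ and $\xi$ square to sums of coordinate squares, I would write
\[
\int_{\mathbb{R}^{N}}\left|x\left(\widehat{M_{1}}f\right)(x)\right|^{2}\mathrm{d}x=\sum_{j=1}^{N}\int_{\mathbb{R}^{N}}\left|x_{j}\left(\widehat{M_{1}}f\right)(x)\right|^{2}\mathrm{d}x,
\]
and analogously for the second factor, splitting the left-hand side as a product $\bigl(\sum_{j}a_{j}\bigr)\bigl(\sum_{j}b_{j}\bigr)$ with $a_{j},b_{j}\ge 0$.

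Next, I would apply the elementary inequality $\bigl(\sum_{j}a_{j}\bigr)\bigl(\sum_{j}b_{j}\bigr)\ge\bigl(\sum_{j}\sqrt{a_{j}b_{j}}\bigr)^{2}$ (a direct consequence of Cauchy--Schwarz applied to the vectors $(\sqrt{a_{j}})$ and $(\sqrt{b_{j}})$). This reduces the task to bounding each mixed product
\[
\int_{\mathbb{R}^{N}}\left|x_{j}\left(\widehat{M_{1}}f\right)(x)\right|^{2}\mathrm{d}x\cdot\int_{\mathbb{R}^{N}}\left|\xi_{j}\left(\widehat{M_{2}}f\right)(\xi)\right|^{2}\mathrm{d}\xi
\]
from below by the diagonal entries $(M_{1}JM_{2}^{T})_{jj}$ and $(M_{1}\Sigma M_{2}^{T})_{jj}$.

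For each fixed $j\in\{1,\ldots,N\}$, I would invoke Lemma \ref{lemma operators} in the special case $k=j$, which yields exactly
\[
\int_{\mathbb{R}^{N}}\left|x_{j}\left(\widehat{M_{1}}f\right)(x)\right|^{2}\mathrm{d}x\int_{\mathbb{R}^{N}}\left|\xi_{j}\left(\widehat{M_{2}}f\right)(\xi)\right|^{2}\mathrm{d}\xi\ge\frac{1}{16\pi^{2}}\left|\left(M_{1}JM_{2}^{T}\right)_{jj}\right|^{2}+\left|\left(M_{1}\Sigma M_{2}^{T}\right)_{jj}\right|^{2}.
\]
Taking square roots, summing over $j$, and squaring the resulting inequality gives (\ref{UP for operator directional}).

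I do not anticipate a serious obstacle here: Lemma \ref{lemma operators} has already done the heavy lifting by encoding both the commutator and anticommutator bounds coming from Proposition \ref{prop} and the symplectic covariance relation (\ref{symplectic covariance}). The only mildly delicate point is the standard observation that the Cauchy--Schwarz step $\sum a_{j}\sum b_{j}\ge(\sum\sqrt{a_{j}b_{j}})^{2}$ respects nonnegative summands, which is automatic since every $a_{j},b_{j}$ is a squared $L^{2}$-norm and the right-hand side of Lemma \ref{lemma operators} is manifestly nonnegative. No further assumptions beyond $\int_{\mathbb R^{2N}}(1+|z|^{2})|W_{\sigma}f(z)|\,dz<\infty$ are needed, since this already guarantees both the finiteness of each $a_{j},b_{j}$ (via the equivalence with $xf(x),w\widehat f(w)\in L^{2}(\mathbb R^{N})$ noted in \S\ref{section 2}, together with unitarity of $\widehat M_{i}$) and the finiteness of all entries of $\Sigma$.
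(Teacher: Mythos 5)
Your proposal matches the paper's proof exactly: the paper also decomposes $\int|x\widehat M_1 f|^2\,\mathrm dx=\sum_j\int|x_j\widehat M_1 f|^2\,\mathrm dx$, applies the Cauchy--Schwarz inequality $\bigl(\sum_j a_j\bigr)\bigl(\sum_j b_j\bigr)\ge\bigl(\sum_j\sqrt{a_j b_j}\bigr)^2$, and then invokes Lemma \ref{lemma operators} (inequality (\ref{UP for operator spatial})) with $k=j$ to bound each diagonal product. Your remarks on the nonnegativity of the summands and the sufficiency of the integrability hypothesis are correct and consistent with the paper's setup.
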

\begin{proof}
    Applying Cauchy-Schwartz's inequality and (\ref{UP for operator spatial}), we have
    \begin{align*}
&\int_{\mathbb{R}^{N}}\left|x\left(\widehat{M_{1}}f\right)(x)\right|^{2}\mathrm{d}x
\int_{\mathbb{R}^{N}}\left|\xi\left(\widehat{M_{2}}f\right)(\xi)\right|^{2}\mathrm{d}\xi\notag\\
\ge&\left[\sum_{j=1}^{N}\left(\int_{\mathbb{R}^{N}}\left|x_{j}\left(\widehat{M_{1}}f\right)(x)\right|^{2}\mathrm{d}x\int_{\mathbb{R}^{N}}\left|\xi_{j}\left(\widehat{M_{2}}f\right)(\xi)\right|^{2}\mathrm{d}\xi
\right)^{\frac{1}{2}}
\right]^{2}\notag\\
\ge&\left[\sum_{j=1}^{N}\left(
\frac{1}{16\pi^{2}}\left|\left(M_{1}JM_{2}^{T}\right)_{jj}\right|^{2}+\left|\left(M_{1}\Sigma M_{2}^{T}\right)_{jj}
\right|^{2}
\right)^{\frac{1}{2}}
\right]^{2}.
    \end{align*}
    
\end{proof}
\begin{remark}
Let $\widehat{M_{1}}f=\mathcal{L}_{M_{1}}[f]$ and $\widehat{M_{2}}f=\mathcal{L}_{M_{2}}[f]$ in Theorem \ref{corollary 1}. Then one has
\begin{align*}
      &\int_{\mathbb{R}^{N}}\left|u\mathcal{L}_{M_{1}}[f](u)\right|^{2}\mathrm{d}u
\int_{\mathbb{R}^{N}}\left|u\mathcal{L}_{M_{2}}[f](u)\right|^{2}\mathrm{d}u\notag\\
&\ge\left[\sum_{j=1}^{N}\left(\frac{1}{16\pi^{2}}\left|\left(M_{1}JM_{2}^{T}\right)_{jj}\right|^{2}+\left|\left(M_{1}\Sigma M_{2}^{T}\right)_{jj}
\right|^{2}
\right)^{\frac{1}{2}}
\right]^{2}.
\end{align*}
If we write $f(x)=\left|f(x)\right| e^{2\pi i\varphi(x)}$,  one can calculate that
\begin{equation}\label{Sigma}
        \Sigma=\begin{pmatrix}
X  & \mathrm{Cov}_{X,W}\\
{\left(\mathrm{Cov}_{X,W}\right)}^{T}  & W
\end{pmatrix}.
    \end{equation}
Consequently, we have
\begin{align*}
&\int_{\mathbb{R}^{N}}\left|u\mathcal{L}_{M_{1}}[f](u)\right|^{2}\mathrm{d}u
\int_{\mathbb{R}^{N}}\left|u\mathcal{L}_{M_{2}}[f](u)\right|^{2}\mathrm{d}u\notag\\
&\ge\biggr[\sum_{j=1}^{N}\biggr(\frac{1}{16\pi^{2}}\left|\left(A_{1}B_{2}^{T}-B_{1}A_{2}^{T}\right)_{jj}\right|^{2}+\Bigr|\Bigr(A_{1}XA_{2}^{T}+B_{1}WB_{2}^{T}\notag\\
&\quad+A_{1}\mathrm{Cov}_{X,W}B_{2}^{T}+B_{1}\left(\mathrm{Cov}_{X,W}\right)^{T}A_{2}^{T}\Big)_{jj}
\Big|^{2}
\bigg)^{\frac{1}{2}}
\bigg]^{2},
\end{align*}
which is the result of Theorem \ref{Up for MO component}.
\end{remark}
\begin{theorem}\label{Up for MO}
Let $\int_{\mathbb R^{2N}} {(1+|z|^2)\left|W_\sigma f(z)\right|} dz<\infty$. There holds 
      \begin{align*}
&\int_{\mathbb{R}^{N}}\left|x\left(\widehat{M_{1}}f\right)(x)\right|^{2}\mathrm{d}x
\int_{\mathbb{R}^{N}}\left|\xi\left(\widehat{M_{2}}f\right)(\xi)\right|^{2}\mathrm{d}\xi\notag\\
&\ge\frac{1}{16\pi^2}\left[\sum_{j=1}^{N}\left(M_{1}JM_{2}^{T}\right)_{jj}\right]^{2}+\left[\sum_{j=1}^{N}\left(M_{1}\Sigma M_{2}^{T}\right)_{jj}\right]^{2}.
    \end{align*}
\end{theorem}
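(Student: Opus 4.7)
The plan is to carry out the symplectic-covariance argument from Lemma \ref{lemma operators} in an aggregated form, performing a single Cauchy--Schwarz step on the index $j=1,\dots,N$ instead of the per-$(j,k)$ bound that gets summed with square roots in the proof of Theorem \ref{corollary 1}. First I would define, for $j=1,\dots,N$, the self-adjoint operators
\begin{equation*}
\widehat{A}_j \triangleq \widehat{M_1}^{*}\widehat{X}_j\widehat{M_1} = \sum_{\alpha=1}^{2N}M_{j,\alpha}^{(1)}\widehat{Z}_\alpha, \qquad \widehat{B}_j \triangleq \widehat{M_2}^{*}\widehat{X}_j\widehat{M_2} = \sum_{\beta=1}^{2N}M_{j,\beta}^{(2)}\widehat{Z}_\beta,
\end{equation*}
where the second equality in each case is (\ref{symplectic covariance}). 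Unitarity of $\widehat{M_1}$ and $\widehat{M_2}$ then converts the left-hand side of the desired inequality into $\bigl(\sum_{j}\Vert\widehat{A}_j f\Vert_2^{2}\bigr)\bigl(\sum_{j}\Vert\widehat{B}_j f\Vert_2^{2}\bigr)$, exactly as in the opening step of Lemma \ref{lemma operators}.

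Next, I would apply the Cauchy--Schwarz inequality with the diagonal pairing $(\widehat{A}_j f,\widehat{B}_j f)$ to obtain
\begin{equation*}
\Bigl(\sum_{j=1}^{N}\Vert\widehat{A}_j f\Vert_2^{2}\Bigr)\Bigl(\sum_{j=1}^{N}\Vert\widehat{B}_j f\Vert_2^{2}\Bigr)\ge\Bigl|\sum_{j=1}^{N}\langle\widehat{A}_j f,\widehat{B}_j f\rangle\Bigr|^{2}=\Bigl|\sum_{j=1}^{N}\langle\widehat{B}_j\widehat{A}_j f,f\rangle\Bigr|^{2},
\end{equation*}
using self-adjointness of $\widehat{B}_j$ in the last equality. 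It is this aggregated Cauchy--Schwarz step (rather than the per-$(j,k)$ bound used to derive Theorem \ref{corollary 1}) that collapses the components into the scalar sums $\sum_{j}(\cdot)_{jj}$ appearing squared in the statement.

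Then, writing $\widehat{B}_j\widehat{A}_j=\tfrac{1}{2}[\widehat{A}_j,\widehat{B}_j]_{+}-\tfrac{1}{2}[\widehat{A}_j,\widehat{B}_j]$ and summing, I would note that $\tfrac{1}{2}\sum_{j}\langle[\widehat{A}_j,\widehat{B}_j]_{+}f,f\rangle$ is real (the anticommutator of self-adjoints is self-adjoint) while $\tfrac{1}{2}\sum_{j}\langle[\widehat{A}_j,\widehat{B}_j]f,f\rangle$ is purely imaginary (the commutator of self-adjoints is anti-self-adjoint), so the squared modulus above splits as a sum of two squares. The per-$j$ identities
\begin{equation*}
\tfrac{1}{2}\langle[\widehat{A}_j,\widehat{B}_j]f,f\rangle=\tfrac{i}{4\pi}(M_1JM_2^{T})_{jj},\qquad \tfrac{1}{2}\langle[\widehat{A}_j,\widehat{B}_j]_{+}f,f\rangle=(M_1\Sigma M_2^{T})_{jj}
\end{equation*}
are exactly the computations already performed inside the proof of Lemma \ref{lemma operators} using (\ref{cov property}) and (\ref{SIGMA}); summing over $j$ and substituting produces the claimed right-hand side $\tfrac{1}{16\pi^{2}}\bigl[\sum_{j}(M_1JM_2^{T})_{jj}\bigr]^{2}+\bigl[\sum_{j}(M_1\Sigma M_2^{T})_{jj}\bigr]^{2}$.

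The main obstacle is purely conceptual: one has to recognise that the \emph{diagonal} pairing on the index $j$ is what turns the componentwise quantities $(M_1JM_2^{T})_{jj}$ and $(M_1\Sigma M_2^{T})_{jj}$ into linear sums sitting inside a single square, as opposed to sums of square roots of squared components produced by the per-$(j,k)$ Cauchy--Schwarz argument of Theorem \ref{corollary 1}. Once that grouping is fixed, every algebraic step (commutator, anticommutator, symplectic covariance, link to $\Sigma$) is already available in \S \ref{section 2} and in Lemma \ref{lemma operators}.
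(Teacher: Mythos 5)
Your proposal is correct, and it takes a genuinely different route from the paper. The paper derives this statement as a corollary of Theorem~\ref{corollary 1}: starting from the componentwise bound $\bigl[\sum_j(\tfrac{1}{16\pi^2}|(M_1JM_2^T)_{jj}|^2+|(M_1\Sigma M_2^T)_{jj}|^2)^{1/2}\bigr]^2$, it applies the Minkowski inequality to split the sum of square roots into a sum of two squares of $\ell^1$-norms, and then drops the absolute values. Your proof bypasses Theorem~\ref{corollary 1} altogether: you perform one Cauchy--Schwarz on the direct-sum Hilbert space $\mathcal{H}^N$ (with $u=(\widehat{A}_jf)_j$, $v=(\widehat{B}_jf)_j$), which gives $\bigl(\sum_j\Vert\widehat{A}_jf\Vert^2\bigr)\bigl(\sum_j\Vert\widehat{B}_jf\Vert^2\bigr)\ge\bigl|\sum_j\langle\widehat{B}_j\widehat{A}_jf,f\rangle\bigr|^2$ directly, and then split the complex number $\sum_j\langle\widehat{B}_j\widehat{A}_jf,f\rangle$ into its real part $\sum_j\tfrac12\langle[\widehat{A}_j,\widehat{B}_j]_+f,f\rangle$ and imaginary part $-\sum_j\tfrac12\langle[\widehat{A}_j,\widehat{B}_j]f,f\rangle$, whose squares give the two terms on the right-hand side. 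Both approaches rest on the same operator-algebraic ingredients (symplectic covariance, the commutator identity (\ref{cov property}), and the anticommutator identity producing $\Sigma$), but the paper's two-step path retains Theorem~\ref{corollary 1} as a genuinely stronger intermediate bound, whereas yours reaches the weaker aggregate inequality in a single, self-contained argument without invoking Minkowski or Proposition~\ref{prop}. Both are valid; the paper's route buys the sharper Theorem~\ref{corollary 1} en route, while yours is shorter if only Theorem~\ref{Up for MO} is wanted.
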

\begin{proof}
    By (\ref{UP for operator directional}) and 
the Minkowski inequality, we have
    \begin{align*}
        &\int_{\mathbb{R}^{N}}\left|x\left(\widehat{M_{1}}f\right)(x)\right|^{2}\mathrm{d}x
\int_{\mathbb{R}^{N}}\left|\xi\left(\widehat{M_{2}}f\right)(\xi)\right|^{2}\mathrm{d}\xi\notag\\
\ge&\left[\sum_{j=1}^{N}\left(
\frac{1}{16\pi^{2}}\left|\left(M_{1}JM_{2}^{T}\right)_{jj}\right|^{2}+\left|\left(M_{1}\Sigma M_{2}^{T}\right)_{jj}
\right|^{2}
\right)^{\frac{1}{2}}
\right]^{2}\notag\\
\ge&\frac{1}{16\pi^{2}}\left[\sum_{j=1}^{N}\left|\left(M_{1}JM_{2}^{T}\right)_{jj}\right|\right]^{2}+\left[\sum_{j=1}^{N}\left|\left(M_{1}\Sigma M_{2}^{T}\right)_{jj}\right|\right]^{2}\notag\\
\ge&\frac{1}{16\pi^{2}}\left[\sum_{j=1}^{N}\left(M_{1}JM_{2}^{T}\right)_{jj}\right]^{2}+\left[\sum_{j=1}^{N}\left(M_{1}\Sigma M_{2}^{T}\right)_{jj}\right]^{2}.
    \end{align*}
\end{proof}
\begin{remark}
    Let 
$\widehat{M_{1}}f=\mathcal{L}_{M_{1}}[f]$ and $\widehat{M_{2}}f=\mathcal{L}_{M_{2}}[f]$ for $f(x)=\left|f(x)\right| e^{2\pi i\varphi(x)}$ in Theorem \ref{Up for MO}. We can obtain that
 \begin{align*}
&\int_{\mathbb{R}^{N}}\left|u\mathcal{L}_{M_{1}}[f](u)\right|^{2}\mathrm{d}u\int_{\mathbb{R}^{N}}\left|u\mathcal{L}_{M_{2}}[f](u)\right|^{2}\mathrm{d}u\notag\\
&\ge\frac{\left[\mathrm{tr}\left(A_{1}^{T}B_{2}-A_{2}^{T}B_{1}\right)\right]^{2}}{16\pi^{2}}+\biggr[\int_{\mathbb{R}^{N}}x^{T}A_{2}^{T}A_{1}
x\left|f(x)\right|^{2}\mathrm{d}x\notag\\
&\quad+ \int_{\mathbb{R}^{N}}w^{T}B_{1}^{T}B_{2}
w\left|\widehat{f}(w)\right|^{2}\mathrm{d}w+ \int_{\mathbb{R}^{N}}x^{T}\left(A_{1}^{T}B_{2}+A_{2}^{T}B_{1}
\right)\nabla \varphi(x)\left|f(x)\right|^2\mathrm{d}x
\bigg]^{2},
\end{align*}
which coincides with the result of Theorem \ref{UP for L2 M1 M2}. 
In fact, we have
\begin{align*}
    &\int_{\mathbb{R}^{N}}\left|u\mathcal{L}_{M_{1}}[f](u)\right|^{2}\mathrm{d}u
\int_{\mathbb{R}^{N}}\left|u\mathcal{L}_{M_{2}}[f](u)\right|^{2}\mathrm{d}u\notag\\
&\ge\frac{1}{16\pi^2}\left[\sum_{j=1}^{N}\left(M_{1}JM_{2}^{T}\right)_{jj}\right]^{2}+\left[\sum_{j=1}^{N}\left(M_{1}\Sigma M_{2}^{T}\right)_{jj}\right]^{2}.
\end{align*}
Notice that
\begin{align*}
\frac{1}{16\pi^{2}}\left[\sum_{j=1}^{N}\left(M_{1}JM_{2}^{T}\right)_{jj}\right]^{2}
=&\frac{\left[\mathrm{tr}\left(A_{1}^{T}B_{2}-A_{2}^{T}B_{1}\right)\right]^2}{16\pi^{2}}.
\end{align*}
Using (\ref{Sigma}), we have
\begin{align*}
&\left[\sum_{j=1}^{N}\left(M_{1}\Sigma M_{2}^{T}\right)_{jj}\right]^{2}\notag\\
=&\left[\sum_{j=1}^{N}\left(
A_{1}XA_{2}^{T}+B_{1}WB_{2}^{T}+A_{1}\mathrm{Cov}_{X,W}B_{2}^{T}+B_{1}\left(\mathrm{Cov}_{X,W}\right)^{T}A_{2}^{T}
\right)_{jj}\right]^{2}\notag\\
=&\left[\mathrm{tr}\left(A_{1}XA_{2}^{T}\right)+\mathrm{tr}\left(B_{2}WB_{1}^{T}\right)+\mathrm{tr}\left(A_{1}\mathrm{Cov}_{X,W}B_{2}^{T}\right)+\mathrm{tr}\left(A_{2}\mathrm{Cov}_{X,W}B_{1}^{T}
\right)\right]^{2}.
\end{align*}
From (\ref{first equality})-(\ref{third equality}), we have
\begin{align*}
    \left[\sum_{j=1}^{N}\left(M_{1}\Sigma M_{2}^{T}\right)_{jj}\right]^{2}=&\biggr[\int_{\mathbb{R}^{N}}x^{T}A_{2}^{T}A_{1}
x\left|f(x)\right|^{2}\mathrm{d}x+ \int_{\mathbb{R}^{N}}w^{T}B_{1}^{T}B_{2}
w\left|\widehat{f}(w)\right|^{2}\mathrm{d}w\notag\\
&+\int_{\mathbb{R}^{N}}x^{T}\left(A_{1}^{T}B_{2}+A_{2}^{T}B_{1}
\right)\nabla \varphi(x)\left|f(x)\right|^2\mathrm{d}x
\bigg]^{2}.
\end{align*}

\end{remark}
To obtain stronger types of HPW uncertainty principle for metaplectic operators, we need the following technical lemmas.
\begin{lemma}\label{last lemma}
Let $f(x)=\left|f(x)\right|e^{2\pi i\varphi(x)}$ and $\int_{\mathbb R^{2N}} {(1+|z|^2)\left|W_\sigma f(z)\right|} dz<\infty$. For $j=1,...,N$, there holds
\begin{equation}\label{operator M spatial}
\int_{\mathbb{R}^{N}}\left|x_j\left(\widehat{M}f\right)(x)\right|^{2}\mathrm{d}x=\left(M\Sigma M^T\right)_{jj}. 
\end{equation}
\end{lemma}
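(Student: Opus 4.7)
The plan is to use the symplectic covariance identity
$\widehat{M}^{*}\widehat{Z}_{\alpha}\widehat{M}=\sum_{\beta=1}^{2N}M_{\alpha,\beta}\widehat{Z}_{\beta}$
recalled in the preceding paragraph, together with the unitarity of $\widehat{M}$ and the definition of $\Sigma_{\alpha,\beta}$ in terms of symmetrized products $\frac{1}{2}(\widehat{Z}_{\alpha}\widehat{Z}_{\beta}+\widehat{Z}_{\beta}\widehat{Z}_{\alpha})$. The assumption $\int_{\mathbb{R}^{2N}}(1+|z|^{2})|W_{\sigma}f(z)|\,dz<\infty$ ensures that all the inner products appearing below are finite (so that the formal manipulations are justified on the relevant dense domain), and it also guarantees $\Sigma_{\alpha,\beta}<\infty$.

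First, I would rewrite the left-hand side as an operator expectation. Since $(\widehat{X}_{j}f)(x)=x_{j}f(x)$ and $\widehat{M}$ is unitary,
\begin{equation*}
\int_{\mathbb{R}^{N}}|x_{j}(\widehat{M}f)(x)|^{2}\,dx
=\|\widehat{X}_{j}\widehat{M}f\|_{2}^{2}
=\langle \widehat{M}^{*}\widehat{X}_{j}^{2}\widehat{M}f,f\rangle
=\langle (\widehat{M}^{*}\widehat{X}_{j}\widehat{M})^{2}f,f\rangle,
\end{equation*}
where in the last step I used $\widehat{M}^{*}\widehat{M}=\widehat{I}$ to insert a factor between the two $\widehat{X}_{j}$'s. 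Next, because $\widehat{X}_{j}=\widehat{Z}_{j}$, the symplectic covariance identity applied to the row index $j$ gives $\widehat{M}^{*}\widehat{X}_{j}\widehat{M}=\sum_{\alpha=1}^{2N}M_{j,\alpha}\widehat{Z}_{\alpha}$, hence
\begin{equation*}
(\widehat{M}^{*}\widehat{X}_{j}\widehat{M})^{2}
=\sum_{\alpha,\beta=1}^{2N}M_{j,\alpha}M_{j,\beta}\,\widehat{Z}_{\alpha}\widehat{Z}_{\beta}.
\end{equation*}

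The second and final step is to symmetrize. Since the coefficients $M_{j,\alpha}M_{j,\beta}$ are symmetric in $(\alpha,\beta)$, relabeling the summation indices yields
\begin{equation*}
\sum_{\alpha,\beta}M_{j,\alpha}M_{j,\beta}\langle\widehat{Z}_{\alpha}\widehat{Z}_{\beta}f,f\rangle
=\sum_{\alpha,\beta}M_{j,\alpha}M_{j,\beta}\Big\langle \frac{\widehat{Z}_{\alpha}\widehat{Z}_{\beta}+\widehat{Z}_{\beta}\widehat{Z}_{\alpha}}{2}f,f\Big\rangle
=\sum_{\alpha,\beta}M_{j,\alpha}\Sigma_{\alpha,\beta}M_{j,\beta},
\end{equation*}
which is exactly $(M\Sigma M^{T})_{jj}$, establishing \eqref{operator M spatial}.

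There is no real obstacle of a conceptual nature here; the only care needed is domain-theoretic, namely justifying the insertion of $\widehat{M}^{*}\widehat{M}$ between the two $\widehat{X}_{j}$'s and the action of the $\widehat{Z}_{\alpha}\widehat{Z}_{\beta}$ products on $f$. This is handled by the hypothesis $\int_{\mathbb{R}^{2N}}(1+|z|^{2})|W_{\sigma}f(z)|\,dz<\infty$, which, as noted in \S\ref{section 2}, is equivalent to $f,xf(x),w\widehat{f}(w)\in L^{2}(\mathbb{R}^{N})$; combined with the fact that $\widehat{M}$ is unitary on $L^{2}(\mathbb{R}^{N})$ and preserves the relevant Schwartz-type decay through its integral-kernel representation, this places $\widehat{M}f$ in the common domain of the $\widehat{Z}_{\alpha}\widehat{Z}_{\beta}$, so that all the manipulations above are legitimate.
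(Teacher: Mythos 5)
Your proof is correct and takes a genuinely different route from the paper's. The paper proceeds by explicit computation: after invoking the symplectic covariance (\ref{symplectic covariance}) and unitarity to rewrite the left side as $\int_{\mathbb{R}^N}\bigl|\sum_{\alpha=1}^{2N}M_{j,\alpha}\widehat{Z}_\alpha f\bigr|^2\,\mathrm{d}x$, it substitutes the polar form $f=|f|e^{2\pi i\varphi}$, expands the squared modulus into cross terms in $x_\alpha$, $\partial_\alpha|f|$, and $\partial_\alpha\varphi$, recognizes those integrals as the quantities $\Delta x^2_{k,l}$, $\Delta w^2_{k,l}$, $\mathrm{Cov}_{x,w}^{k,l}$ of Definition \ref{def-jihao}, and only then reassembles them into $(M\Sigma M^T)_{jj}$ via the block form (\ref{Sigma}) of $\Sigma$. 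You instead stay entirely at the operator level: you pass from the $L^2$-norm to the expectation $\bigl\langle(\widehat{M}^*\widehat{X}_j\widehat{M})^2 f,f\bigr\rangle$, expand the square of $\sum_\alpha M_{j,\alpha}\widehat{Z}_\alpha$, and use the $(\alpha,\beta)$-symmetry of the coefficients $M_{j,\alpha}M_{j,\beta}$ to replace $\widehat{Z}_\alpha\widehat{Z}_\beta$ by its symmetrization, which is precisely the defining expression (\ref{SIGMA}) for $\Sigma_{\alpha,\beta}$. Your version is shorter, avoids the polar decomposition and the bookkeeping of mixed derivative terms, and makes transparent why $\Sigma$ must enter in symmetrized form; the paper's version is more elementary and has the side benefit of explicitly displaying the integrals that constitute $(M\Sigma M^T)_{jj}$, which the authors reuse elsewhere. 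One small point worth noting for rigor in your write-up: you should observe (as you implicitly do via the symmetrization) that although the individual terms $\langle\widehat{Z}_\alpha\widehat{Z}_\beta f,f\rangle$ can be complex, their symmetric combination is real since $\widehat{Z}_\alpha$, $\widehat{Z}_\beta$ are self-adjoint, consistent with the left side being a real nonnegative quantity.
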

    \begin{proof}
        Applying (\ref{symplectic covariance}) and (\ref{operator inequality 3}), we have
\begin{align*}
    \int_{\mathbb{R}^{N}}\left|x_{j}\left(\widehat{M}f\right)(x)\right|^{2}\mathrm{d}x
    =&\int_{\mathbb{R}^{N}}\left|\left(\widehat{M}^{*}\widehat{X}_{j}\widehat{M}f\right)(x)\right|^2\mathrm{d}x\notag\\
=&\int_{\mathbb{R}^{N}}\left|\sum_{\alpha=1}^{2N}M_{j,\alpha}\widehat{Z}_{\alpha}\right|^2\mathrm{d}x\notag\\
=&\int_{\mathbb{R}^{N}}\left|\sum_{\alpha=1}^{N}M_{j,\alpha}x_{\alpha}f(x)+\frac{1}{2\pi i}
\sum_{\alpha=N+1}^{2N}M_{j,\alpha}\frac{\partial f(x)}{\partial x_{\alpha-N}}
\right|^2\mathrm{d}x.
\end{align*}
Similar to proof of (\ref{M_1}), we have
\begin{align*}
    &\int_{\mathbb{R}^{N}}\left|x_{j}\left(\widehat{M}f\right)(x)\right|^{2}\mathrm{d}x\notag\\
=&\int_{\mathbb{R}^{N}}\left|\sum_{\alpha=1}^{N}M_{j,\alpha}x_{\alpha}\left|f(x)\right|+\frac{1}{2\pi i}
\sum_{\alpha=N+1}^{2N}M_{j,\alpha}\frac{\partial \left|f(x)\right|}{\partial x_{\alpha-N}}+\sum_{\alpha=N+1}^{2N}M_{j,\alpha}\frac{\partial \varphi(x)}{\partial x_{\alpha-N}}\left|f(x)\right|
\right|^2\mathrm{d}x\notag\\
=&\sum_{\alpha=1}^{N}M_{j,\alpha}\sum_{\beta=1}^{N}M_{j,\beta}\Delta x_{\alpha,\beta}^2+2\sum_{\alpha=1}^{N}M_{j,\alpha}\sum_{\beta=N+1}^{2N}M_{j,\beta}\mathrm{Cov}_{x,w}^{\alpha, \beta-N}
\notag\\
&+\left[\int_{\mathbb{R}^{N}}\left(\sum_{\alpha=N+1}^{2N}M_{j,\alpha}\frac{\partial \varphi(x)}{\partial x_{\alpha-N}}\left|f(x)\right|
\right)^2\mathrm{d}x+\frac{1}{4\pi^2}\int_{\mathbb{R}^{N}}\left(\sum_{\alpha=N+1}^{2N}M_{j,\alpha}\frac{\partial \left|f(x)\right|}{\partial x_{\alpha-N}}\right)^2\mathrm{d}x
\right].
\end{align*}
One can directly calculate that
\begin{align*}
&\int_{\mathbb{R}^{N}}\left(\sum_{\alpha=N+1}^{2N}M_{j,\alpha}\frac{\partial \varphi(x)}{\partial x_{\alpha-N}}\left|f(x)\right|
\right)^2\mathrm{d}x+\frac{1}{4\pi^2}\int_{\mathbb{R}^{N}}\left(\sum_{\alpha=N+1}^{2N}M_{j,\alpha}\frac{\partial \left|f(x)\right|}{\partial x_{\alpha-N}}\right)^2\mathrm{d}x\notag\\
=&\sum_{\alpha=N+1}^{2N}M_{j,\alpha}\sum_{\beta=N+1}^{2N}M_{j,\beta}\Delta w_{\alpha-N,\beta-N}^2.
\end{align*}
Therefore we have (\ref{operator M spatial}).
    \end{proof}
    \begin{lemma}
    \label{lemma general MO}
 Let $f(x)=\left|f(x)\right|e^{2\pi i\varphi(x)}$ and $\int_{\mathbb R^{2N}} {(1+|z|^2)\left|W_\sigma f(z)\right|} dz<\infty$. Then
\begin{align}\label{Up for general MO}
    &\int_{\mathbb{R}^{N}}\left|x\left(\widehat{M_{1}}f\right)(x)\right|^{2}\mathrm{d}x
\int_{\mathbb{R}^{N}}\left|\xi\left(\widehat{M_{2}}f\right)(\xi)\right|^{2}\mathrm{d}\xi\notag\\
\ge&\left[\sum_{j=1}^{N}\left(
\prod_{k=1}^{2}\left(M_k \Sigma M_k^T\right)_{jj}
\right)^{\frac{1}{2}}
\right]^2.
\end{align}
\end{lemma}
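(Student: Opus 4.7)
The plan is to reduce this to a direct application of Lemma \ref{last lemma} followed by a coordinatewise Cauchy–Schwartz step. First I would decompose the $L^2$-norms over $\mathbb R^N$ coordinatewise, writing
\[
\int_{\mathbb R^N}\bigl|x(\widehat{M_k}f)(x)\bigr|^2\,\mathrm dx=\sum_{j=1}^N\int_{\mathbb R^N}\bigl|x_j(\widehat{M_k}f)(x)\bigr|^2\,\mathrm dx,\qquad k=1,2.
\]
Then by Lemma \ref{last lemma} each summand equals $\bigl(M_k\Sigma M_k^T\bigr)_{jj}$, which is in particular nonnegative; this is the key input and lets us replace the operator-side norms by the purely algebraic quantities $(M_k\Sigma M_k^T)_{jj}$.

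Next I would apply the Cauchy–Schwartz inequality in the form
\[
\Bigl(\sum_{j=1}^N a_j\Bigr)\Bigl(\sum_{j=1}^N b_j\Bigr)\;\ge\;\Bigl(\sum_{j=1}^N\sqrt{a_j b_j}\Bigr)^2\qquad\text{for }a_j,b_j\ge 0,
\]
with $a_j=(M_1\Sigma M_1^T)_{jj}$ and $b_j=(M_2\Sigma M_2^T)_{jj}$. This immediately gives
\[
\prod_{k=1}^2\sum_{j=1}^N(M_k\Sigma M_k^T)_{jj}\;\ge\;\Biggl[\sum_{j=1}^N\Bigl(\prod_{k=1}^2(M_k\Sigma M_k^T)_{jj}\Bigr)^{\!1/2}\Biggr]^2,
\]
which is exactly the desired inequality (\ref{Up for general MO}).

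There is no real obstacle here beyond invoking the correct previous lemma; the only point that needs a brief remark is that the square roots on the right-hand side are well-defined, and this is automatic from the identity in Lemma \ref{last lemma} since $(M_k\Sigma M_k^T)_{jj}=\int_{\mathbb R^N}|x_j(\widehat{M_k}f)(x)|^2\,\mathrm dx\ge 0$. The assumption $\int_{\mathbb R^{2N}}(1+|z|^2)|W_\sigma f(z)|\,\mathrm dz<\infty$ is used only to guarantee finiteness of the entries of $\Sigma$ (as discussed in \S \ref{section 2}) so that Lemma \ref{last lemma} applies.
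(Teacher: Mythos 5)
Your proof is correct and follows essentially the same route as the paper: both decompose the $L^2$-norm coordinatewise, identify each marginal second moment with $(M_k\Sigma M_k^T)_{jj}$ via Lemma \ref{last lemma}, and then apply the Cauchy--Schwartz inequality $(\sum a_j)(\sum b_j)\ge(\sum\sqrt{a_jb_j})^2$. The only cosmetic difference is the order in which the substitution and Cauchy--Schwartz are applied, which does not affect the argument.
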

\begin{proof}
     From (\ref{operator M spatial}), we have that for $k=1,2$,
\begin{equation*}
\int_{\mathbb{R}^{N}}\left|x_{j}\left(\widehat{M_k}f\right)(x)\right|^{2}\mathrm{d}x=\left(M_k\Sigma M_k^T\right)_{jj}.
\end{equation*}
        Using Cauchy-Schwartz’s inequality, we have
        \begin{align*}
&\int_{\mathbb{R}^{N}}\left|x\left(\widehat{M_{1}}f\right)(x)\right|^{2}\mathrm{d}x
\int_{\mathbb{R}^{N}}\left|\xi\left(\widehat{M_{2}}f\right)(\xi)\right|^{2}\mathrm{d}\xi\notag\\
\ge&\left[\sum_{j=1}^{N}\left(\int_{\mathbb{R}^{N}}\left|x_j\left(\widehat{M_{1}}f\right)(x)\right|^{2}\mathrm{d}x\int_{\mathbb{R}^{N}}\left|\xi_{j}\left(\widehat{M_{2}}f\right)(\xi)\right|^{2}\mathrm{d}\xi
\right)^{\frac{1}{2}}
\right]^{2}\notag\\
=&\left[\sum_{j=1}^{N}\left(
\prod_{k=1}^{2}\left(M_k \Sigma M_k^T\right)_{jj}
\right)^{\frac{1}{2}}
\right]^2.
        \end{align*}
\end{proof}
    \begin{lemma}\label{Lemma-MO}
   Let $f(x)=\left|f(x)\right|e^{2\pi i\varphi(x)}$ and $\int_{\mathbb R^{2N}} {(1+|z|^2)\left|W_\sigma f(z)\right|} dz<\infty$. Then
        \begin{equation}\label{operator M}
            \int_{\mathbb{R}^{N}}\left|x\left(\widehat{M}f\right)(x)\right|^{2}\mathrm{d}x=\sum_{j=1}^{N}\left(M\Sigma M^T\right)_{jj}.
        \end{equation}
    \end{lemma}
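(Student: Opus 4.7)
The plan is to reduce the claim directly to Lemma \ref{last lemma}. Indeed, since $|x|^{2}=\sum_{j=1}^{N}x_{j}^{2}$, we have the pointwise identity
\begin{equation*}
\left|x\bigl(\widehat{M}f\bigr)(x)\right|^{2}=\sum_{j=1}^{N}\left|x_{j}\bigl(\widehat{M}f\bigr)(x)\right|^{2}.
\end{equation*}

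First I would integrate this identity over $\mathbb{R}^{N}$ and swap the sum with the integral (which is justified because the sum is finite and every term is nonnegative, so Tonelli's theorem applies). This yields
\begin{equation*}
\int_{\mathbb{R}^{N}}\left|x\bigl(\widehat{M}f\bigr)(x)\right|^{2}\mathrm{d}x=\sum_{j=1}^{N}\int_{\mathbb{R}^{N}}\left|x_{j}\bigl(\widehat{M}f\bigr)(x)\right|^{2}\mathrm{d}x.
\end{equation*}

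Next I would apply Lemma \ref{last lemma} termwise. Its hypotheses, namely $f(x)=|f(x)|e^{2\pi i\varphi(x)}$ and $\int_{\mathbb{R}^{2N}}(1+|z|^{2})|W_{\sigma}f(z)|\,\mathrm{d}z<\infty$, are exactly the hypotheses of the present lemma, so Lemma \ref{last lemma} is applicable for each $j=1,\dots,N$ and gives $\int_{\mathbb{R}^{N}}\left|x_{j}(\widehat{M}f)(x)\right|^{2}\mathrm{d}x=\left(M\Sigma M^{T}\right)_{jj}$. Summing over $j$ produces the desired identity.

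There is essentially no obstacle: the proof is a one-line consequence of Lemma \ref{last lemma} together with the pointwise decomposition of $|x|^{2}$. The only point deserving a brief mention is the finiteness of each $\int_{\mathbb{R}^{N}}|x_{j}(\widehat{M}f)(x)|^{2}\mathrm{d}x$, but this is already guaranteed by Lemma \ref{last lemma} since $(M\Sigma M^{T})_{jj}$ is finite under the assumption $\int_{\mathbb{R}^{2N}}(1+|z|^{2})|W_{\sigma}f(z)|\,\mathrm{d}z<\infty$ (which ensures $\Sigma_{\alpha,\beta}<\infty$ for all $\alpha,\beta$, as noted in Section \ref{section 2}).
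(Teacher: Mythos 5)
Your proof is correct and is exactly the intended route: the paper states Lemma \ref{Lemma-MO} without a separate proof, evidently as an immediate consequence of Lemma \ref{last lemma} via the pointwise decomposition $|x(\widehat{M}f)(x)|^{2}=\sum_{j=1}^{N}|x_{j}(\widehat{M}f)(x)|^{2}$ and termwise application, which is what you do.
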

For $f(x)=\left|f(x)\right|e^{2\pi i\varphi(x)}$, the authors in \cite{Dang-Deng-Qian} give the so-called extra-strong uncertainty principle, which is stated as follows,
\begin{equation}\label{COV}
		\Delta x^{2} \Delta w^{2}\ge \frac{1}{16\pi^{2}}+\mathrm{COV}_{x,w}^{2},
	\end{equation}
	where $\mathrm{COV}_{x,w}=\int_{-\infty}^{\infty}|x\varphi^{\prime}(x) |\left|f(x)\right|^2\mathrm{d}x\ge \int_{-\infty}^{\infty}x\varphi^{\prime}(x) \left|f(x)\right|^2\mathrm{d}x=\mathrm{Cov}_{x,w}$.
   In \cite{Dang-Deng-Qian2}, the authors obtain 
   the extra-strong uncertainty principle for LCT based on the result of (\ref{COV}), i.e.,
    \begin{align}\label{extra-stronger}
&\int_{\mathbb{R}}\left|u\mathcal{L}_{M_{1}}[f](u)\right|^{2}\mathrm{d}u\int_{\mathbb{R}}\left|u\mathcal{L}_{M_{2}}[f](u)\right|^{2}\mathrm{d}u\notag\\
&\ge\left(\frac{1}{16\pi^{2}}+\mathrm{COV}_{x,w}^2-\mathrm{Cov}_{x,w}^2\right)\left(a_{1}b_{2}-a_{2}b_{1}\right)^{2}\notag\\
&\quad+\biggr[a_{1}a_{2}\Delta x^{2}+b_{1}b_{2}\Delta w^{2}+\left(a_{1}b_{2}+a_{2}b_{1}\right)\mathrm{Cov}_{x,w}
\bigg]^{2},
\end{align}
where $M_{k}=\begin{pmatrix}
a_{k} & b_{k}\\ 
c_{k} & d_{k}
\end{pmatrix}$ for $k=1,2$. To the authors' best knowledge, it is the best result so far. Since there is also a generalization of (\ref{COV}) in higher dimensional Euclidean spaces (see \cite{Dang-Mai}), it is significant and interesting to expect analogous result of (\ref{extra-stronger}) for metaplectic operators. In the following we prove some generalizations of (\ref{extra-stronger}) for a special class of metaplectic operators.
    \begin{theorem}
    Let $f(x)=\left|f(x)\right|e^{2\pi i\varphi(x)}$ and $\int_{\mathbb R^{2N}} {(1+|z|^2)\left|W_\sigma f(z)\right|} dz<\infty$. If $ M_{1}=\begin{pmatrix}
A_1& B_1\\ 
C_{1} & D_{1}
\end{pmatrix}$ and $M_{2}=\begin{pmatrix}
A_2& B_2\\ 
C_{2} & D_{2}
\end{pmatrix}$, where
$A_{k}=\mathrm{diag}(a_{11}^{(k)},\dots,a_{NN}^{(k)})$ and $
B_{k}=\mathrm{diag}(b_{11}^{(k)},\dots,b_{NN}^{(k)}), k=1,2$, then
\begin{align}\label{Up for diag MO}
    &\int_{\mathbb{R}^{N}}\left|x\left(\widehat{M_{1}}f\right)(x)\right|^{2}\mathrm{d}x
\int_{\mathbb{R}^{N}}\left|\xi\left(\widehat{M_{2}}f\right)(\xi)\right|^{2}\mathrm{d}\xi\notag\\
&\ge \biggr[\sum_{j=1}^{N}\biggr(
\left(\frac{1}{16\pi^2}+{\mathrm{COV}_{x,w}^{j,j}}^2-{\mathrm{Cov}_{x,w}^{j,j}}^2\right)\left|\left(M_1 J M_2^T\right)_{jj}\right|^2+\left|\left(M_{1}\Sigma M_{2}^{T}\right)_{jj}
\right|^2
\bigg)^{\frac{1}{2}}
\bigg]^2.\notag\\
\end{align}
\end{theorem}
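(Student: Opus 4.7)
The plan is to reduce the claim to a per-coordinate inequality via Cauchy-Schwarz, use the diagonal structure to rewrite the per-coordinate factors as $L^2$-norms of simple first-order operators acting on $f$, establish a clean algebraic identity for their product, and finish by applying the coordinate-wise extra-strong HPW inequality.

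First I would apply Cauchy-Schwarz coordinate-wise exactly as in the proof of Theorem~\ref{corollary 1}, reducing the theorem to showing, for each $j$, the per-coordinate bound
\begin{align*}
I_j := & \int_{\mathbb{R}^{N}} \left|x_j (\widehat{M_1}f)(x)\right|^2\,\mathrm{d}x \cdot \int_{\mathbb{R}^{N}} \left|\xi_j (\widehat{M_2}f)(\xi)\right|^2\,\mathrm{d}\xi \\
\ge & \left(\tfrac{1}{16\pi^2}+(\mathrm{COV}_{x,w}^{j,j})^2-(\mathrm{Cov}_{x,w}^{j,j})^2\right)\left|(M_1 J M_2^T)_{jj}\right|^2 + \left|(M_1 \Sigma M_2^T)_{jj}\right|^2,
\end{align*}
since summing $\sqrt{I_j}$ and squaring then yields the theorem. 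The diagonality of $A_k, B_k$ together with (\ref{symplectic covariance}) gives $\widehat{M_k}^{*} \widehat{X}_j \widehat{M_k} = a_k \widehat{X}_j + b_k \widehat{P}_j$, where $a_k = (A_k)_{jj}$, $b_k = (B_k)_{jj}$, and unitarity of $\widehat{M_k}$ identifies $I_j = \|(a_1 \widehat{X}_j + b_1 \widehat{P}_j) f\|_2^2 \cdot \|(a_2 \widehat{X}_j + b_2 \widehat{P}_j) f\|_2^2$.

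Writing $f = |f| e^{2\pi i\varphi}$, direct expansion yields $\|(a_k \widehat{X}_j + b_k \widehat{P}_j) f\|_2^2 = P_k + Q_k$, where $P_k = \int_{\mathbb{R}^N} R_k^2 |f|^2\,\mathrm{d}x$ with $R_k = a_k x_j + b_k \partial_j \varphi$, and $Q_k = (b_k^2/4\pi^2) \int_{\mathbb{R}^N} (\partial_j |f|)^2\,\mathrm{d}x$. The heart of the argument is the purely algebraic identity
\begin{equation*}
(P_1 + Q_1)(P_2 + Q_2) = \left((M_1 \Sigma M_2^T)_{jj}\right)^2 + (a_1 b_2 - a_2 b_1)^2 \left(\Delta x_{j,j}^2 \Delta w_{j,j}^2 - (\mathrm{Cov}_{x,w}^{j,j})^2\right).
\end{equation*}
Abbreviating $X = \Delta x_{j,j}^2$, $W = \int \varphi_j^2 |f|^2\,\mathrm{d}x$, $C = \mathrm{Cov}_{x,w}^{j,j}$, $s = \int (\partial_j |f|)^2\,\mathrm{d}x$, one verifies $P_1 P_2 = \alpha^2 + (a_1 b_2 - a_2 b_1)^2 (XW - C^2)$ with $\alpha = a_1 a_2 X + (a_1 b_2 + a_2 b_1) C + b_1 b_2 W$; expands $P_1 Q_2 + P_2 Q_1$ via the identity $(b_2 R_1)^2 + (b_1 R_2)^2 = (a_1 b_2 - a_2 b_1)^2 x_j^2 + 2 b_1 b_2 R_1 R_2$ to get $(a_1 b_2 - a_2 b_1)^2 Xs/(4\pi^2) + 2\alpha\beta$ with $\beta = b_1 b_2 s/(4\pi^2)$; and uses $Q_1 Q_2 = \beta^2$, $\alpha + \beta = (M_1 \Sigma M_2^T)_{jj}$, and $W + s/(4\pi^2) = \Delta w_{j,j}^2$ to close the identity.

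Finally I invoke the coordinate-wise extra-strong HPW inequality $\Delta x_{j,j}^2 \Delta w_{j,j}^2 \ge \tfrac{1}{16\pi^2} + (\mathrm{COV}_{x,w}^{j,j})^2$, obtained by splitting $\Delta w_{j,j}^2 = W + s/(4\pi^2)$ and applying Cauchy-Schwarz to get $X \cdot W \ge (\mathrm{COV}_{x,w}^{j,j})^2$ and $X \cdot s/(4\pi^2) \ge 1/(16\pi^2)$ (the latter via integration by parts using $\|f\|_2 = 1$), in direct analogy with the derivation of (\ref{COV}); see also \cite{Dang-Mai}. Since the diagonal structure gives $a_1 b_2 - a_2 b_1 = (M_1 J M_2^T)_{jj}$, substitution into the identity completes the per-coordinate bound. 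The main obstacle is the algebraic identity itself: the $X^2, W^2, XC, CW$ coefficients in $P_1 P_2$ must cancel exactly against those in $\alpha^2$, leaving only $\pm(a_1 b_2 - a_2 b_1)^2$ coefficients on $C^2$ and $XW$, and the mixed $P_k Q_l$ terms must then deliver precisely the $Xs/(4\pi^2)$ residue that upgrades $W$ to $\Delta w_{j,j}^2$.
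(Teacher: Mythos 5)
Your proof is correct and takes essentially the same route as the paper's. Both reduce to a per-coordinate bound via Cauchy--Schwarz (the paper via its Lemma \ref{lemma general MO}), both rest on the algebraic identity expressing $\prod_k (M_k\Sigma M_k^T)_{jj}$ as $\bigl((M_1\Sigma M_2^T)_{jj}\bigr)^2 + (a_1b_2-a_2b_1)^2\bigl(\Delta x_{j,j}^2\Delta w_{j,j}^2-(\mathrm{Cov}_{x,w}^{j,j})^2\bigr)$, and both then invoke the coordinate-wise extra-strong HPW inequality; your $P_k+Q_k$ decomposition is just a more explicit (and perfectly valid) way of verifying the identity the paper states without detail.
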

\begin{proof}
       Notice that
        \begin{align*}
            \left(M_k \Sigma M_k^T\right)_{jj}=&\left(A_k X A_k^T\right)_{jj}+\left(B_k X B_k^T\right)_{jj}+2\left(A_k\mathrm{Cov}_{X,W}B_k^T\right)_{jj}\notag\\
            =&\left(a_{jj}^{(k)}\right)^2\Delta x_{j,j}^2+\left(b_{jj}^{(k)}\right)^2\Delta w_{j,j}^2+2a_{jj}^{(k)}b_{jj}^{(k)}\mathrm{Cov}_{x,w}^{j,j}.
        \end{align*}
By the method in \cite{Dang-Deng-Qian}, one has 
        \begin{equation*}
            \Delta x_{j,j}^2 \Delta w_{j,j}^2\ge \frac{1}{16\pi^2}+{\mathrm{COV}_{x,w}^{j,j}}^2.
        \end{equation*}
        Thus we have
        \begin{align*}
           & \prod_{k=1}^{2}\left(M_k \Sigma M_k^T\right)_{jj}\notag\\
    =&\left(\Delta x_{j,j}^2 \Delta w_{j,j}^2-{\mathrm{Cov}_{x,w}^{j,j}}^2\right)\left(a_{jj}^{(1)}b_{jj}^{(2)}-a_{jj}^{(2)}b_{jj}^{(1)}\right)^2\notag\\
&+\left[a_{jj}^{(1)}a_{jj}^{(2)}\Delta x_{j,j}^2+b_{jj}^{(1)}b_{jj}^{(2)}\Delta w_{j,j}^2+\left(a_{jj}^{(1)}b_{jj}^{(2)}+a_{jj}^{(2)}b_{jj}^{(1)}\right)\mathrm{Cov}_{x,w}^{j,j}
\right]^2\notag\\
\ge&\left(\frac{1}{16\pi^2}+{\mathrm{COV}_{x,w}^{j,j}}^2-{\mathrm{Cov}_{x,w}^{j,j}}^2\right)\left(a_{jj}^{(1)}b_{jj}^{(2)}-a_{jj}^{(2)}b_{jj}^{(1)}\right)^2\notag\\
&+\left[a_{jj}^{(1)}a_{jj}^{(2)}\Delta x_{j,j}^2+b_{jj}^{(1)}b_{jj}^{(2)}\Delta w_{j,j}^2+\left(a_{jj}^{(1)}b_{jj}^{(2)}+a_{jj}^{(2)}b_{jj}^{(1)}\right)\mathrm{Cov}_{x,w}^{j,j}
\right]^2.
        \end{align*}
         One can calculate that
        \begin{equation*}
            \left(a_{jj}^{(1)}b_{jj}^{(2)}-a_{jj}^{(2)}b_{jj}^{(1)}\right)^2=\left|\left(M_1 J M_2^T\right)_{jj}\right|^{2}
        \end{equation*}
    and
    \begin{equation*}
        \left[a_{jj}^{(1)}a_{jj}^{(2)}\Delta x_{j,j}^2+b_{jj}^{(1)}b_{jj}^{(2)}\Delta w_{j,j}^2+\left(a_{jj}^{(1)}b_{jj}^{(2)}+a_{jj}^{(2)}b_{jj}^{(1)}\right)\mathrm{Cov}_{x,w}^{j,j}
\right]^2=\left|\left(M_{1}\Sigma M_{2}^{T}\right)_{jj}
\right|^2.
    \end{equation*}
        Consequently, by invoking (\ref{Up for general MO}), we have (\ref{Up for diag MO}). 
\end{proof}
\begin{coro}
Let $f(x)=\left|f(x)\right|e^{2\pi i\varphi(x)}, xf(x)$ and $w\widehat{f}(w)\in L^{2}(\mathbb{R}^{N})$. If $A_{k}=\mathrm{diag}(a_{11}^{(k)},\dots,a_{NN}^{(k)})$ and $
B_{k}=\mathrm{diag}(b_{11}^{(k)},\dots,b_{NN}^{(k)})$ for $k=1,2$. Then
    \begin{align}\label{Up for component FMT}
&\int_{\mathbb{R}^{N}}\left|u\mathcal{L}_{M_{1}}[f](u)\right|^{2}\mathrm{d}u\int_{\mathbb{R}^{N}}\left|u\mathcal{L}_{M_{2}}[f](u)\right|^{2}\mathrm{d}u\notag\\
        &\ge \biggr[\sum_{j=1}^{N}\biggr(
\left(\frac{1}{16\pi^2}+{\mathrm{COV}_{x,w}^{j,j}}^2-{\mathrm{Cov}_{x,w}^{j,j}}^2\right)\left(a_{jj}^{(1)}b_{jj}^{(2)}-a_{jj}^{(2)}b_{jj}^{(1)}\right)^2\notag\\
&\quad+\left[a_{jj}^{(1)}a_{jj}^{(2)}\Delta x_{j,j}^2+b_{jj}^{(1)}b_{jj}^{(2)}\Delta w_{j,j}^2+\left(a_{jj}^{(1)}b_{jj}^{(2)}+a_{jj}^{(2)}b_{jj}^{(1)}\right)\mathrm{Cov}_{x,w}^{j,j}
\right]^2
\bigg)^{\frac{1}{2}}
\bigg]^2.
    \end{align}
\end{coro}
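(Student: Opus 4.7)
The plan is to deduce this corollary directly from the preceding theorem (the inequality (\ref{Up for diag MO})) by a simple specialisation from general metaplectic operators to FMTs. First I would invoke the identification made in Section \ref{section 2} right after equation (\ref{FMT}): each FMT $\mathcal{L}_{M_k}[f](u)$ coincides with the free metaplectic operator $\widehat{M_{W_k,n}}f(u)$ for a suitable integer $n$, so $|u\mathcal{L}_{M_k}[f](u)|=|u(\widehat{M_k}f)(u)|$ pointwise and the LHS of (\ref{Up for diag MO}) is exactly the LHS of (\ref{Up for component FMT}). The hypothesis is also compatible: as observed in Section \ref{section 2}, the condition $\int_{\mathbb{R}^{2N}}(1+|z|^2)|W_\sigma f(z)|\,dz<\infty$ used by the theorem is equivalent to $f, xf(x), w\widehat{f}(w)\in L^{2}(\mathbb{R}^{N})$.

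Next I would translate the matrix entries on the right-hand side of (\ref{Up for diag MO}) into the notation of Definition \ref{def-jihao}. Using the block decomposition of $M_k$ and the explicit block form of $\Sigma$ given by (\ref{Sigma}), a short block-matrix computation shows that $(M_{1}JM_{2}^{T})_{jj}$ equals the $(j,j)$ entry of $A_{1}B_{2}^{T}-B_{1}A_{2}^{T}$, and $(M_{1}\Sigma M_{2}^{T})_{jj}$ equals the $(j,j)$ entry of $A_{1}XA_{2}^{T}+A_{1}\mathrm{Cov}_{X,W}B_{2}^{T}+B_{1}(\mathrm{Cov}_{X,W})^{T}A_{2}^{T}+B_{1}WB_{2}^{T}$. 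Under the diagonal hypothesis on $A_k$ and $B_k$, these diagonal entries collapse to single-term scalars, namely $(M_{1}JM_{2}^{T})_{jj}=a_{jj}^{(1)}b_{jj}^{(2)}-a_{jj}^{(2)}b_{jj}^{(1)}$ and $(M_{1}\Sigma M_{2}^{T})_{jj}=a_{jj}^{(1)}a_{jj}^{(2)}\Delta x_{j,j}^{2}+b_{jj}^{(1)}b_{jj}^{(2)}\Delta w_{j,j}^{2}+(a_{jj}^{(1)}b_{jj}^{(2)}+a_{jj}^{(2)}b_{jj}^{(1)})\mathrm{Cov}_{x,w}^{j,j}$, where I have used $X_{jj}=\Delta x_{j,j}^{2}$, $W_{jj}=\Delta w_{j,j}^{2}$ and $(\mathrm{Cov}_{X,W})_{jj}=\mathrm{Cov}_{x,w}^{j,j}$ from Definition \ref{def-jihao}.

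Substituting these two identities into the right-hand side of (\ref{Up for diag MO}) reproduces the right-hand side of (\ref{Up for component FMT}) verbatim, completing the argument. There is no real analytic obstacle: the sharper one-dimensional bound $\Delta x_{j,j}^{2}\Delta w_{j,j}^{2}\ge 1/(16\pi^{2})+(\mathrm{COV}_{x,w}^{j,j})^{2}$ from \cite{Dang-Deng-Qian}, together with the Cauchy--Schwarz factorisation that underlies the computation of $(M_{k}\Sigma M_{k}^{T})_{jj}$ via Lemma \ref{last lemma}, is already packaged into the preceding theorem. What remains is purely mechanical. The only point requiring a small amount of care is index bookkeeping: the $(j,j)$ entry of a $2N\times 2N$ product of the block matrices in question receives contributions only from the top-left $N\times N$ blocks, and one must verify that the diagonal structure of $A_{k}$ and $B_{k}$ eliminates all cross-index terms so that the answer depends only on the scalars $a_{jj}^{(k)}$, $b_{jj}^{(k)}$ and the $(j,j)$ entries of $X$, $W$, $\mathrm{Cov}_{X,W}$.
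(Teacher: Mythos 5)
Your proposal is correct and is essentially the paper's implicit argument: the corollary has no separate proof in the paper precisely because it is the direct specialisation of the preceding theorem from general metaplectic operators to FMTs, using the identification $\mathcal{L}_{M_k}[f]=\widehat{M}_{W_k,n}f$ and the equivalence between the moment condition on $W_\sigma f$ and the $L^2$-hypotheses, together with the diagonal block-matrix simplifications of $(M_1JM_2^T)_{jj}$ and $(M_1\Sigma M_2^T)_{jj}$ that you carry out correctly. The only point worth adding explicitly is that passing to FMTs requires $\det(B_k)\neq 0$, i.e. all $b_{jj}^{(k)}\neq 0$, which is the implicit standing assumption whenever $\mathcal{L}_{M_k}$ appears.
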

\begin{remark}
    When $N=1$, $M_{1}=\begin{pmatrix}
a_{1} & b_{1}\\ 
c_{1} & d_{1}
\end{pmatrix}$ and  $M_{2}=\begin{pmatrix}
a_{2} & b_{2}\\ 
c_{2} & d_{2}
\end{pmatrix}$, we obtain that (\ref{Up for component FMT}) reduces to (\ref{extra-stronger}).
\end{remark}
    We denote by $I_{N}^{+,-}$ any one of the class of $N\times N$ matrices of $1$ or $-1$ for all diagonal
elements and 0 otherwise. Using Lemma \ref{Lemma-MO}, we have the following result.
\begin{theorem}
Let $f(x)=\left|f(x)\right|e^{2\pi i\varphi(x)}$ and $\int_{\mathbb R^{2N}} {(1+|z|^2)\left|W_\sigma f(z)\right|} dz<\infty$. If $ 
M_{1}=\begin{pmatrix}
A_1& B_1\\ 
C_{1} & D_{1}
\end{pmatrix}$ and $M_{2}=\begin{pmatrix}
A_2& B_2\\ 
C_{2} & D_{2}
\end{pmatrix}$, where 
$A_k=a_k I_{N}^{+,-}$ and $B_k=b_k I_{N}^{+,-}$ for $k=1,2$, then
    \begin{align}\label{UP for metaplectic operator special}
        &\int_{\mathbb{R}^{N}}\left|x\left(\widehat{M_{1}}f\right)(x)\right|^{2}\mathrm{d}x
\int_{\mathbb{R}^{N}}\left|\xi\left(\widehat{M_{2}}f\right)(\xi)\right|^{2}\mathrm{d}\xi\notag\\
&\ge \left(\frac{1}{16\pi^2}+\frac{\mathrm{COV}_{x,w}^2-\mathrm{Cov}_{x,w}^2}{N^2}\right)\left[\sum_{j=1}^{N}\left(M_{1}JM_{2}^{T}\right)_{jj}\right]^{2}+\left[\sum_{j=1}^{N}\left(M_{1}\Sigma M_{2}^{T}\right)_{jj}\right]^{2}.\notag\\
    \end{align}
    \end{theorem}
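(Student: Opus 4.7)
The plan is to reduce the inequality, via Lemma~\ref{Lemma-MO} and an elementary algebraic identity, to a single scalar statement that matches the $N$-dimensional extra-strong HPW principle attributed in the paper to \cite{Dang-Mai}.

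First I would apply Lemma~\ref{Lemma-MO} to each factor on the left-hand side, obtaining $\int_{\mathbb{R}^{N}}|x(\widehat{M_{k}}f)(x)|^{2}\,\mathrm{d}x=\sum_{j}(M_{k}\Sigma M_{k}^{T})_{jj}$. Because $I_{N}^{+,-}$ is diagonal with $\pm 1$ entries, $(I_{N}^{+,-})^{T}(I_{N}^{+,-})=I_{N}$, so by cyclicity of the trace $\mathrm{tr}(A_{k}^{T}A_{k}X)=a_{k}^{2}\Delta x^{2}$, $\mathrm{tr}(B_{k}^{T}B_{k}W)=b_{k}^{2}\Delta w^{2}$, and $\mathrm{tr}(B_{k}^{T}A_{k}\mathrm{Cov}_{X,W})=a_{k}b_{k}\mathrm{Cov}_{x,w}$. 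Reading the occurrences of $I_{N}^{+,-}$ across $A_{1},B_{1},A_{2},B_{2}$ as the same fixed choice, the same bookkeeping also yields
\[
\sum_{j=1}^{N}(M_{1}\Sigma M_{2}^{T})_{jj}=S,\qquad \sum_{j=1}^{N}(M_{1}JM_{2}^{T})_{jj}=N(a_{1}b_{2}-a_{2}b_{1}),
\]
where $S:=a_{1}a_{2}\Delta x^{2}+b_{1}b_{2}\Delta w^{2}+(a_{1}b_{2}+a_{2}b_{1})\mathrm{Cov}_{x,w}$.

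Next I would use the algebraic identity
\[
\bigl(a_{1}^{2}\Delta x^{2}+b_{1}^{2}\Delta w^{2}+2a_{1}b_{1}\mathrm{Cov}_{x,w}\bigr)\bigl(a_{2}^{2}\Delta x^{2}+b_{2}^{2}\Delta w^{2}+2a_{2}b_{2}\mathrm{Cov}_{x,w}\bigr)=S^{2}+(a_{1}b_{2}-a_{2}b_{1})^{2}\bigl(\Delta x^{2}\Delta w^{2}-\mathrm{Cov}_{x,w}^{2}\bigr),
\]
verified by direct expansion, to rewrite the left-hand side. Substituting the formulas from the first step, the $S^{2}$ contribution matches $[\sum_{j}(M_{1}\Sigma M_{2}^{T})_{jj}]^{2}$ on the right, and after dividing by the common factor $(a_{1}b_{2}-a_{2}b_{1})^{2}$ the whole statement reduces to the scalar inequality $\Delta x^{2}\Delta w^{2}\ge\frac{N^{2}}{16\pi^{2}}+\mathrm{COV}_{x,w}^{2}$.

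This final scalar inequality is precisely the $N$-dimensional generalization of the extra-strong HPW principle (\ref{COV}), which the paper attributes to \cite{Dang-Mai}; invoking it closes the argument. The hard part is exactly this last step: the weaker bound $\Delta x^{2}\Delta w^{2}\ge\frac{N^{2}}{16\pi^{2}}+\mathrm{Cov}_{x,w}^{2}$ that one can obtain from the per-coordinate 1D HPW combined with Minkowski only recovers the previously established Theorem~\ref{Up for MO}; it is the sharper $\mathrm{COV}_{x,w}^{2}$-version from \cite{Dang-Mai} that is strong enough to produce the improved additive term $\frac{\mathrm{COV}_{x,w}^{2}-\mathrm{Cov}_{x,w}^{2}}{N^{2}}$ in the stated inequality.
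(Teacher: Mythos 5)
Your proof is correct and takes essentially the same route as the paper: both apply Lemma~\ref{Lemma-MO} to rewrite each factor as $a_k^2\Delta x^2+b_k^2\Delta w^2+2a_kb_k\mathrm{Cov}_{x,w}$, use the same algebraic factorization of the product into $S^2+(a_1b_2-a_2b_1)^2(\Delta x^2\Delta w^2-\mathrm{Cov}_{x,w}^2)$, and close with the extra-strong $N$-dimensional bound $\Delta x^2\Delta w^2\ge \frac{N^2}{16\pi^2}+\mathrm{COV}_{x,w}^2$ from \cite{Dang-Mai}. The only phrasing caveat is your ``divide by $(a_1b_2-a_2b_1)^2$'' step, which should be stated as multiplying the scalar inequality by that nonnegative factor (the case $a_1b_2=a_2b_1$ being trivial); the argument is otherwise identical.
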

    \begin{proof}
      Using (\ref{operator M}), we have 
\begin{equation*}
    \int_{\mathbb{R}^{N}}\left|x\left(\widehat{M_{1}}f\right)(x)\right|^{2}\mathrm{d}x=
a_1^2\Delta x^2+b_1^2\Delta w^2+2a_1b_1\mathrm{Cov}_{x,w}
\end{equation*}
and
\begin{equation*}
\int_{\mathbb{R}^{N}}\left|\xi\left(\widehat{M_{2}}f\right)(\xi)\right|^{2}\mathrm{d}\xi=a_2^2\Delta x^2+b_2^2\Delta w^2+2a_2b_2\mathrm{Cov}_{x,w}.
\end{equation*}
Thus we have
  \begin{align*}
&\int_{\mathbb{R}^{N}}\left|x\left(\widehat{M_{1}}f\right)(x)\right|^{2}\mathrm{d}x\int_{\mathbb{R}^{N}}\left|\xi\left(\widehat{M_{2}}f\right)(\xi)\right|^{2}\mathrm{d}\xi\notag\\
=& \left(\Delta x^{2}\Delta w^{2}-\mathrm{Cov}_{x,w}^{2}\right)\left(a_1b_2-a_2b_1\right)^{2}\notag\\
&\quad+\bigg[a_1a_2\Delta x^{2}+b_1b_2\Delta w^{2}+\left(a_1b_2+a_2b_1\right)\mathrm{Cov}_{x,w}\bigg]^{2}\notag\\
\ge& \left(\frac{N^{2}}{16\pi^{2}}+\mathrm{COV}_{x,w}^{2}-\mathrm{Cov}_{x,w}^{2}\right)\left(a_1b_2-a_2b_1\right)^{2}\notag\\
&\quad+\bigg[a_1a_2\Delta x^{2}+b_1b_2\Delta w^{2}+\left(a_1b_2+a_2b_1\right)\mathrm{Cov}_{x,w}\bigg]^{2},
    \end{align*}
    where the last inequality follows from the fact that
\begin{equation*}
    \Delta x^{2}\Delta w^{2}\ge \frac{N^{2}}{16\pi^{2}}+\mathrm{COV}_{x,w}^{2}
\end{equation*}
(\cite{Dang-Mai}).
Since
\begin{equation*}
   N^2 \left(a_1b_2-a_2b_1\right)^2=\left[\sum_{j=1}^{N}\left(M_{1}JM_{2}^{T}\right)_{jj}\right]^{2}
\end{equation*}
and
\begin{equation*}
    \bigg[a_1a_2\Delta x^{2}+b_1b_2\Delta w^{2}+\left(a_1b_2+a_2b_1\right)\mathrm{Cov}_{x,w}\bigg]^{2}
=\left[\sum_{j=1}^{N}\left(M_{1}\Sigma M_{2}^{T}\right)_{jj}\right]^{2},
\end{equation*}
we have the desired inequality (\ref{UP for metaplectic operator special}). 
    \end{proof}
     \begin{remark}
    In the following we compare the lower bounds of (\ref{Up for diag MO}) and (\ref{UP for metaplectic operator special}) for $A_k=a_k I_{N}^{+,-}, B_k=b_k I_{N}^{+,-}, k=1,2$. Then (\ref{Up for diag MO}) and (\ref{UP for metaplectic operator special}) can be reduced to 
      \begin{align}\label{Up for special MO}
          &\int_{\mathbb{R}^{N}}\left|x\left(\widehat{M_{1}}f\right)(x)\right|^{2}\mathrm{d}x
\int_{\mathbb{R}^{N}}\left|\xi\left(\widehat{M_{2}}f\right)(\xi)\right|^{2}\mathrm{d}\xi\notag\\
&\ge \biggr[\sum_{j=1}^{N}\biggr(
\left(\frac{1}{16\pi^2}+{\mathrm{COV}_{x,w}^{j,j}}^2-{\mathrm{Cov}_{x,w}^{j,j}}^2\right)\left(a_{1}b_{2}-a_{2}b_{1}\right)^2\notag\\
&\quad+\Bigr[a_{1}a_{2}\Delta x_{j,j}^2+b_{1}b_{2}\Delta w_{j,j}^2+\left(a_{1}b_{2}+a_{2}b_{1}\right)\mathrm{Cov}_{x,w}^{j,j}
\Big]^2
\bigg)^{\frac{1}{2}}
\bigg]^2
      \end{align}
      and 
       \begin{align}\label{Up for signal diag MO}
&\int_{\mathbb{R}^{N}}\left|x\left(\widehat{M_{1}}f\right)(x)\right|^{2}\mathrm{d}x\int_{\mathbb{R}^{N}}\left|\xi\left(\widehat{M_{2}}f\right)(\xi)\right|^{2}\mathrm{d}\xi\notag\\
\ge& \left(\frac{N^{2}}{16\pi^{2}}+\mathrm{COV}_{x,w}^{2}-\mathrm{Cov}_{x,w}^{2}\right)\left(a_1b_2-a_2b_1\right)^{2}\notag\\
&\quad+\bigg[a_1a_2\Delta x^{2}+b_1b_2\Delta w^{2}+\left(a_1b_2+a_2b_1\right)\mathrm{Cov}_{x,w}\bigg]^{2}\notag\\
=&J_1\left(a_{1}b_{2}-a_{2}b_{1}\right)^2+J_2
    \end{align}
      Using the Minkowski inequality, we have
      \begin{align*}
      &\biggr[\sum_{j=1}^{N}\biggr(
\left(\frac{1}{16\pi^2}+{\mathrm{COV}_{x,w}^{j,j}}^2-{\mathrm{Cov}_{x,w}^{j,j}}^2\right)\left(a_{1}b_{2}-a_{2}b_{1}\right)^2\notag\\
&\quad+\Bigr[a_{1}a_{2}\Delta x_{j,j}^2+b_{1}b_{2}\Delta w_{j,j}^2+\left(a_{1}b_{2}+a_{2}b_{1}\right)\mathrm{Cov}_{x,w}^{j,j}
\Big]^2
\bigg)^{\frac{1}{2}}
\bigg]^2\notag\\
          \ge&\biggr[\sum_{j=1}^{N}
\left(\frac{1}{16\pi^2}+{\mathrm{COV}_{x,w}^{j,j}}^2-{\mathrm{Cov}_{x,w}^{j,j}}^2\right)^{\frac{1}{2}}
\bigg]^2\left(a_{1}b_{2}-a_{2}b_{1}\right)^2\notag\\
&+\biggr[\sum_{j=1}^{N}\Bigr|a_1a_2 \Delta x_{j,j}^2+b_1b_2 \Delta w_{j,j}^2+\left(a_1b_2+a_2b_1\right)\mathrm{Cov}_{x,w}^{j,j}\Big|\bigg]^2\notag\\
=&K_1\left(a_{1}b_{2}-a_{2}b_{1}\right)^2+K_2.
      \end{align*}
          Clearly, one has that
\begin{equation*}
K_2
\ge\biggr[a_1a_2 \Delta x^2+b_1b_2 \Delta w^2+\left(a_1b_2+a_2b_1\right)\mathrm{Cov}_{x,w}
\bigg]^2=J_2.
\end{equation*}
So we just need to compare $J_1$ and $K_1$.
Note that
$\mathrm{Cov}_{x,w}=\sum_{j=1}^{N}\mathrm{Cov}_{x,w}^{j,j},$
\begin{align*}
\mathrm{COV}_{x,w}
=&\int_{\mathbb{R}^{N}}\left(\sum_{j=1}^{N}x_j^2\right)^{\frac{1}{2}}\left(\sum_{j=1}^{N}\left(\frac{\partial \varphi(x)}{\partial x_j}\right)^2\right)^{\frac{1}{2}}\left|f(x)\right|^2\mathrm{d}x\notag\\
    \ge&\sum_{j=1}^{N}\int_{\mathbb{R}^{N}}\left|x_j \frac{\partial \varphi(x)}{\partial x_j}\right|\left|f(x)\right|^2\mathrm{d}x=\sum_{j=1}^{N}\mathrm{COV}_{x,w}^{j,j}
\end{align*}
and 
\begin{equation*}
    K_1\ge\frac{N^2}{16\pi^2}+\left[\sum_{j=1}^{N}\left({\mathrm{COV}_{x,w}^{j,j}}^2-{\mathrm{Cov}_{x,w}^{j,j}}^2\right)^{\frac{1}{2}}\right]^2.
\end{equation*}
Hence we have
\begin{align*}
     K_1-J_1\ge&\left[\sum_{j=1}^{N}\left({\mathrm{COV}_{x,w}^{j,j}}^2-{\mathrm{Cov}_{x,w}^{j,j}}^2\right)^{\frac{1}{2}}\right]^2-\left(\mathrm{COV}_{x,w}^{2}-\mathrm{Cov}_{x,w}^{2}\right)\notag\\
=&\sum_{\substack{j,k=1\\j\neq k}}^N\left({\mathrm{COV}_{x,w}^{j,j}}^2-{\mathrm{Cov}_{x,w}^{j,j}}^2\right)^{\frac{1}{2}}\left({\mathrm{COV}_{x,w}^{k,k}}^2-{\mathrm{Cov}_{x,w}^{k,k}}^2\right)^{\frac{1}{2}}\notag\\
&+\sum_{j=1}^{N}{\mathrm{COV}_{x,w}^{j,j}}^2
-\mathrm{COV}_{x,w}^2+\sum_{\substack{j,k=1\\j\neq k}}^N\mathrm{Cov}_{x,w}^{j,j}\mathrm{Cov}_{x,w}^{k,k}.
\end{align*}
It seems that (\ref{Up for special MO}) and (\ref{Up for signal diag MO}) are not comparable.
    \end{remark}
    \begin{coro}\label{Corollary-up for FMT}
Let $f(x)=\left|f(x)\right|e^{2\pi i \varphi(x)}, xf(x)$ and $w\widehat{f}(w)
\in L^{2}(\mathbb{R}^{N})$. If $A_k=a_k I_{N}^{+,-}$ and $B_k=b_k I_{N}^{+,-}$ for $k=1,2$, then
    \begin{align}\label{result for special case}
&\int_{\mathbb{R}^{N}}\left|u\mathcal{L}_{M_{1}}[f](u)\right|^{2}\mathrm{d}u\int_{\mathbb{R}^{N}}\left|u\mathcal{L}_{M_{2}}[f](u)\right|^{2}\mathrm{d}u\notag\\
&\ge \left(\frac{N^{2}}{16\pi^{2}}+\mathrm{COV}_{x,w}^{2}-\mathrm{Cov}_{x,w}^{2}\right)\left(a_1 b_2-a_2 b_1\right)^{2}\notag\\
&\quad+\biggr[a_1a_2\Delta x^{2}+b_1 b_2\Delta w^{2}+\left(a_1b_2+a_2b_1\right)\mathrm{Cov}_{x,w}\bigg]^{2}.
    \end{align}
\end{coro}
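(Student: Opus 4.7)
The plan is to derive this corollary as an immediate specialization of Theorem \ref{UP for metaplectic operator special}, choosing the metaplectic operators to be the free metaplectic transformations themselves: $\widehat{M_k} f = \mathcal{L}_{M_k}[f]$ for $k = 1, 2$. With this choice, the left-hand side of (\ref{UP for metaplectic operator special}) agrees verbatim with the left-hand side of the target inequality (\ref{result for special case}), so the entire content of the corollary reduces to re-expressing the right-hand side of (\ref{UP for metaplectic operator special}) in terms of the scalar parameters $a_k, b_k$ and the moments $\Delta x^{2}, \Delta w^{2}, \mathrm{Cov}_{x,w}$.

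For that step I would invoke the two matrix identities
$$\Bigl[\sum_{j=1}^{N}(M_{1}JM_{2}^{T})_{jj}\Bigr]^{2} = N^{2}(a_{1}b_{2}-a_{2}b_{1})^{2}$$
and
$$\Bigl[\sum_{j=1}^{N}(M_{1}\Sigma M_{2}^{T})_{jj}\Bigr]^{2} = \bigl[a_{1}a_{2}\Delta x^{2}+b_{1}b_{2}\Delta w^{2}+(a_{1}b_{2}+a_{2}b_{1})\mathrm{Cov}_{x,w}\bigr]^{2},$$
which are precisely the identities that were verified at the end of the proof of Theorem \ref{UP for metaplectic operator special}. The first one follows by computing the upper-left block of $M_{1}JM_{2}^{T}$ as $A_{1}B_{2}^{T}-B_{1}A_{2}^{T}$ and applying $A_{k}=a_{k}I_{N}^{+,-}$, $B_{k}=b_{k}I_{N}^{+,-}$ together with $(I_{N}^{+,-})^{2}=I_{N}$ to get $(a_{1}b_{2}-a_{2}b_{1})I_{N}$. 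The second one follows by inserting the block form of $\Sigma$ from (\ref{Sigma}) to obtain the upper-left block $A_{1}XA_{2}^{T}+A_{1}\mathrm{Cov}_{X,W}B_{2}^{T}+B_{1}(\mathrm{Cov}_{X,W})^{T}A_{2}^{T}+B_{1}WB_{2}^{T}$, then using $\mathrm{tr}(I_{N}^{+,-}PI_{N}^{+,-})=\mathrm{tr}(P)$ for any $N\times N$ matrix $P$ together with $\mathrm{tr}(X)=\Delta x^{2}$, $\mathrm{tr}(W)=\Delta w^{2}$, $\mathrm{tr}(\mathrm{Cov}_{X,W})=\mathrm{Cov}_{x,w}$.

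Substituting these two evaluations back into (\ref{UP for metaplectic operator special}) and distributing the factor $N^{2}$ across the coefficient $\bigl(\tfrac{1}{16\pi^{2}}+\tfrac{\mathrm{COV}_{x,w}^{2}-\mathrm{Cov}_{x,w}^{2}}{N^{2}}\bigr)$ produces the right-hand side of (\ref{result for special case}) exactly. I do not expect any serious obstacle: the result is a direct corollary and the only care required is tracking the block-matrix algebra, in particular recognizing that the sign-diagonal factors $I_{N}^{+,-}$ pass through traces because they square to the identity. No new analytic input beyond what is already contained in Theorem \ref{UP for metaplectic operator special} is needed.
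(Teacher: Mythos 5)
Your proposal is correct and matches the paper's approach. The paper gives no separate proof of this corollary: the inequality in question is established verbatim as the penultimate display in the proof of Theorem \ref{UP for metaplectic operator special}, and that theorem's closing two identities convert it into the matrix notation of the theorem statement. You run the argument in reverse — apply the theorem, then undo the matrix reformulation using exactly those identities — which is logically equivalent. The only point worth making explicit is that the identities $A_1B_2^T-B_1A_2^T=(a_1b_2-a_2b_1)I_N$ and the trace computations for the upper-left block of $M_1\Sigma M_2^T$ require the same sign matrix $I_N^{+,-}$ to appear in $A_1,B_1,A_2,B_2$ simultaneously, so that $(I_N^{+,-})^2=I_N$ eliminates it; this is the implicit reading of the hypothesis in both your argument and the paper's.
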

\begin{remark}
    When $N=1$, $M_{1}=\begin{pmatrix}
a_{1} & b_{1}\\ 
c_{1} & d_{1}
\end{pmatrix}$ and  $M_{2}=\begin{pmatrix}
a_{2} & b_{2}\\ 
c_{2} & d_{2}
\end{pmatrix}$, (\ref{result for special case}) can be reduced to (\ref{extra-stronger}).
\end{remark}
\begin{remark}
Here we denote by $\mu({A_{k}})$ and $\mu({B_{k}})$ the singular values of $A_{k}$ and $B_{k}$ for $k=1,2$. Let
$A_k=\mu({A_k})I_{N}^{+,-}$ and $B_k=\mu({B_k})I_{N}^{+,-}$ in Corollary \ref{Corollary-up for FMT}. Then (\ref{result for special case}) becomes
 \begin{align}\label{Up for LCT singular}
&\int_{\mathbb{R}^{N}}\left|u\mathcal{L}_{M_{1}}[f](u)\right|^{2}\mathrm{d}u\int_{\mathbb{R}^{N}}\left|u\mathcal{L}_{M_{2}}[f](u)\right|^{2}\mathrm{d}u\notag\\
&\ge \left(\frac{N^{2}}{16\pi^{2}}+\mathrm{COV}_{x,w}^{2}-\mathrm{Cov}_{x,w}^{2}\right)\left(\mu({A_{1}})\mu({B_{2}})-\mu({A_{2}})\mu({B_{1}})\right)^{2}\notag\\
&\quad+\biggr[\mu({A_{1}})\mu({A_{2}})\Delta x^{2}+\mu({B_{1}})\mu({B_{2}})\Delta w^{2}+\left(\mu({A_{1}})\mu({B_{2}})+\mu({A_{2}})\mu({B_{1}})\right)\mathrm{Cov}_{x,w}\bigg]^{2}.\notag\\
    \end{align}
In the following, one can verify that the lower bound of (\ref{Up for LCT singular}) is sharper than that in \cite{Zhang2}.
     In \cite{Zhang2}, the author obtains the following result,
\begin{align}\label{result for special case 2}
    &\int_{\mathbb{R}^{N}}\left|u\mathcal{L}_{M_{1}}[f](u)\right|^{2}\mathrm{d}u\int_{\mathbb{R}^{N}}\left|u\mathcal{L}_{M_{2}}[f](u)\right|^{2}\mathrm{d}u\notag\\
&\ge \left(\frac{N^{2}}{16\pi^{2}}+\mathrm{COV}_{x,w}^{2}-|\mathrm{Cov}|_{x,w}^{2}\right)\left(\mu({A_{1}})\mu({B_{2}})-\mu({A_{2}})\mu({B_{1}})\right)^{2}\notag\\
&\quad+\bigg[\mu({A_{1}})\mu({A_{2}})\Delta x^{2}+\mu({B_{1}})\mu({B_{2}})\Delta w^{2}-\left(\mu({A_{1}})\mu({B_{2}})+\mu({A_{2}})\mu({B_{1}})\right)|\mathrm{Cov}|_{x,w}\bigg]^{2},\notag\\
\end{align}
    where 
    \begin{equation*}
        |\mathrm{Cov}|_{x,w}=\sum_{j=1}^{N}\left|\int_{\mathbb{R}^{N}}x_{j}
        \frac{\partial \varphi(x)}{\partial x_{j}}
        \left|f(x)\right|^2\mathrm{d}x\right|.
    \end{equation*}
It is obvious that $\mathrm{COV}_{x,w}\ge |\mathrm{Cov}|_{x,w}\ge|\mathrm{Cov}_{x,w}|$. Therefore one has that the lower bounds of (\ref{Up for LCT singular}) is stronger than that of (\ref{result for special case 2}).
\end{remark}
\begin{remark}
Based on subsequent research and combined with the results calculated in the paper, we find that Proposition \ref{Ro-ScUP} implies the previous results. Recall that the result of Proposition \ref{Ro-ScUP} is
\begin{equation}\label{RSUP}
    \Upsilon+\frac{i}{4\pi}\Omega\ge 0,
\end{equation}
where $\Upsilon=D_{1,2}\Sigma\left(D_{1,2}\right)^T$,
$\Omega=D_{1,2}J\left(D_{1,2}\right)^T$ with $D_{1,2}=\begin{pmatrix}
    A_1 &B_1\\
    A_2 &B_2
\end{pmatrix},$ and where $A_j$ and $B_j$ are real $N\times N$ matrices from  $M_j\in\mathrm{Sp}(2N,\mathbb{R})$ with $M_j=\begin{pmatrix}
    A_j& B_j\\
    C_j&D_j
\end{pmatrix}, j=1,2.$ Here the inequality (\ref{RSUP}) means that $\Upsilon+\frac{i}{4\pi}\Omega$ is positive semi-definite.
For $f(x)=\left|f(x)\right|e^{2\pi i \varphi(x)}$, by (\ref{Sigma}) one has that 
\begin{equation*}
    \Upsilon+\frac{i}{4\pi}\Omega =\begin{pmatrix}
        P&Q\\
        S&T
    \end{pmatrix},
\end{equation*}
where
\begin{align*}
    P=&A_1XA_1^T+B_1WB_1^T+A_1\mathrm{Cov}_{X,W}B_1^T+B_1\left(\mathrm{Cov}_{X,W}\right)^TA_1^T,\notag\\
    Q=&A_1XA_2^T+B_1WB_2^T+A_1\mathrm{Cov}_{X,W}B_2^T+B_1\left(\mathrm{Cov}_{X,W}\right)^TA_2^T+\frac{i}{4\pi}\left(A_1B_2^T-B_1A_2^T\right),\notag\\
    S=&A_2XA_1^T+B_2WB_1^T+A_2\mathrm{Cov}_{X,W}B_1^T+B_2\left(\mathrm{Cov}_{X,W}\right)^TA_1^T-\frac{i}{4\pi}\left(B_2A_1^T-A_2B_1^T\right)\\
\text{and} \\
T=&A_2XA_2^T+B_2WB_2^T+A_2\mathrm{Cov}_{X,W}B_2^T+B_2\left(\mathrm{Cov}_{X,W}\right)^TA_2^T.
\end{align*}
Let $\left(P\right)_{jj}, \left(Q\right)_{jj}, \left(S\right)_{jj}$ and $\left(T\right)_{jj}$ be the $(j,j)$-th diagonal element of $P, Q, S$ and $T$ for $j=1,\cdots N$, respectively.
If the matrix inequality (\ref{RSUP}) holds, we have
\begin{equation*}
   \Gamma_j = \begin{pmatrix}
        \left(P\right)_{jj}&\left(Q\right)_{jj}\\
        \left(S\right)_{jj}&\left(T\right)_{jj}
    \end{pmatrix}\ge 0.
\end{equation*}
Notice that
\begin{align*}
    \left(P\right)_{jj}=&
    \left(A_1XA_1^T+B_1WB_1^T+A_1\mathrm{Cov}_{X,W}B_1^T+B_1\left(\mathrm{Cov}_{X,W}\right)^TA_1^T\right)_{jj}
    \notag\\
=&\sum_{k=1}^{N}\left(A_1\right)_{jk} \sum_{l=1}^{N}\left(A_1\right)_{jl} \Delta x_{k,l}^2+\sum_{k=1}^{N}\left(B_1\right)_{jk} \sum_{l=1}^{N}\left(B_1\right)_{jl} \Delta w_{k,l}^2\notag\\
&+2\sum_{k=1}^{N}\left(A_1\right)_{jk} \sum_{l=1}^{N}\left(B_1\right)_{jl} \mathrm{Cov}_{x,w}^{k,l}.
\end{align*}
A direct computation yields
\begin{align*}
    \sum_{k=1}^{N}\left(B_1\right)_{jk} \sum_{l=1}^{N}\left(B_1\right)_{jl} \Delta w_{k,l}^2
    =&\frac{1}{4\pi^2}\sum_{k=1}^{N}\left(B_1\right)_{jk} \sum_{l=1}^{N}\left(B_1\right)_{jl}\int_{\mathbb{R}^N}\frac{\partial \left|f(x)\right|}{\partial x_k}\frac{\partial \left|f(x)\right|}{\partial x_l}\mathrm{d}x\notag\\
    &+\sum_{k=1}^{N}\left(B_1\right)_{jk} \sum_{l=1}^{N}\left(B_1\right)_{jl}\int_{\mathbb{R}^N}\frac{\partial \varphi(x)}{\partial x_k}\frac{\partial \varphi(x)}{\partial x_l}\left|f(x)\right|^2\mathrm{d}x
\end{align*}
From (\ref{M_1}), we have
\begin{equation*}
\left(P\right)_{jj}=\int_{\mathbb{R}^{N}}\left|u_j\mathcal{L}_{M_{1}}[f](u)\right|^{2}\mathrm{d}u.
\end{equation*}
Similarly, by (\ref{M_2}) we have
\begin{equation*}
    \left(T\right)_{jj}
    =\int_{\mathbb{R}^{N}}\left|u_j\mathcal{L}_{M_{2}}[f](u)\right|^{2}\mathrm{d}u.
\end{equation*}
Since 
\begin{equation*}
    \left(P\right)_{jj}+\left(T\right)_{jj}=\int_{\mathbb{R}^{N}}\left|u_j\mathcal{L}_{M_{1}}[f](u)\right|^{2}\mathrm{d}u+\int_{\mathbb{R}^{N}}\left|u_j\mathcal{L}_{M_{2}}[f](u)\right|^{2}\mathrm{d}u\ge 0,
\end{equation*}
we have that the matrix inequality $\Gamma _{j}\ge 0$ satisfy if and only if
\begin{equation*}
\left(P\right)_{jj}\left(T\right)_{jj}\ge\left(Q\right)_{jj}\left(S\right)_{jj},
\end{equation*}
that is 
\begin{align*}
&\int_{\mathbb{R}^{N}}\left|u_j\mathcal{L}_{M_{1}}[f](u)\right|^{2}\mathrm{d}u\int_{\mathbb{R}^{N}}\left|u_j\mathcal{L}_{M_{2}}[f](u)\right|^{2}\mathrm{d}u\notag\\
    \ge&\frac{1}{16\pi^2}\left|\left(A_1B_2^T-B_1 A_2^T\right)_{jj}\right|^2+\Bigr|
    \Bigr(A_1 XA_2^T+B_1 W B_2^T+A_1\mathrm{Cov}_{X,W}B_2^T\notag\\
&+B_1\left(\mathrm{Cov}_{X,W}\right)^TA_2^T\Big)_{jj}
    \Big|^2.
    \end{align*}
Combining the Cauchy-Schwartz inequality, we have
\begin{align*}
&\int_{\mathbb{R}^{N}}\left|u\mathcal{L}_{M_{1}}[f](u)\right|^{2}\mathrm{d}u
\int_{\mathbb{R}^{N}}\left|u\mathcal{L}_{M_{2}}[f](u)\right|^{2}\mathrm{d}u\notag\\
&\ge\biggr[\sum_{j=1}^{N}\biggr(\frac{1}{16\pi^{2}}\left|\left(A_{1}B_{2}^{T}-B_{1}A_{2}^{T}\right)_{jj}\right|^{2}+\Bigr|\Bigr(A_{1}XA_{2}^{T}+B_{1}WB_{2}^{T}\notag\\
&\quad+A_{1}\mathrm{Cov}_{X,W}B_{2}^{T}+B_{1}\left(\mathrm{Cov}_{X,W}\right)^{T}A_{2}^{T}\Big)_{jj}
\Big|^{2}
\bigg)^{\frac{1}{2}}
\bigg]^{2},
\end{align*}
which coincide with the result of Theorem \ref{Up for MO component}.
Furthermore, we have
    $\sum_{j=1}^{N} \Gamma_{j}\ge0$.
    Hence we have
    \begin{align*}
        &\int_{\mathbb{R}^{N}}\left|u\mathcal{L}_{M_{1}}[f](u)\right|^{2}\mathrm{d}u\int_{\mathbb{R}^{N}}\left|u\mathcal{L}_{M_{2}}[f](u)\right|^{2}\mathrm{d}u\notag\\
    \ge&\frac{\left[\mathrm{tr}\left(A_{1}B_{2}^{T}-B_{1}A_{2}^{T}\right)\right]^2}{16\pi^{2}}\notag\\
        &+\left[\mathrm{tr}\left(A_{1}XA_{2}^{T}+B_{1}WB_{2}^{T}+A_{1}\mathrm{Cov}_{X,W}B_{2}^{T}+B_{1}\left(\mathrm{Cov}_{X,W}\right)^TA_{2}^{T}
\right)\right]^{2}\notag\\
       =&\frac{\left[\mathrm{tr}\left(A_{1}^{T}B_{2}-A_{2}^{T}B_{1}\right)\right]^2}{16\pi^{2}}\notag\\
        &+\left[\mathrm{tr}\left(A_{1}XA_{2}^{T}\right)+\mathrm{tr}\left(B_{2}WB_{1}^{T}\right)+\mathrm{tr}\left(A_{1}\mathrm{Cov}_{X,W}B_{2}^{T}\right)+\mathrm{tr}\left(A_{2}\mathrm{Cov}_{X,W}B_{1}^{T}
\right)\right]^{2}.
    \end{align*}
Combining (\ref{first equality})-(\ref{third equality}), we have 
 \begin{align*}
&\int_{\mathbb{R}^{N}}\left|u\mathcal{L}_{M_{1}}[f](u)\right|^{2}\mathrm{d}u\int_{\mathbb{R}^{N}}\left|u\mathcal{L}_{M_{2}}[f](u)\right|^{2}\mathrm{d}u\notag\\
&\ge\frac{\left[\mathrm{tr}\left(A_{1}^{T}B_{2}-A_{2}^{T}B_{1}\right)\right]^{2}}{16\pi^{2}}+\biggr[\int_{\mathbb{R}^{N}}x^{T}A_{2}^{T}A_{1}
x\left|f(x)\right|^{2}\mathrm{d}x\notag\\
&\quad+ \int_{\mathbb{R}^{N}}w^{T}B_{1}^{T}B_{2}
w\left|\widehat{f}(w)\right|^{2}\mathrm{d}w+ \int_{\mathbb{R}^{N}}x^{T}\left(A_{1}^{T}B_{2}+A_{2}^{T}B_{1}
\right)\nabla \varphi(x)\left|f(x)\right|^2\mathrm{d}x
\bigg]^{2},
\end{align*}
    which is the result of Theorem \ref{UP for L2 M1 M2}.
\end{remark}

\subsection*{Acknowledgment. }
W. X. Mai was supported by the Science and Technology Development Fund, Macau SAR (No. 0133/2022/A). P. Dang was supported by the Science and Technology Development Fund, Macau SAR (No. 0067/2024/RIA1).

\subsection*{Data availability statement. }
Data sharing is not applicable to this article as no new data were created or analyzed in this study.
\subsection*{Conflict of interest}
The authors declared that they have no competing interest regarding this research work.

\appendix
	\section{Proof of Lemmas \ref{the first lemma} and \ref{the second lemma}}\label{appendix}
    \begin{Proof}
Let $g(x)=f(x)e^{\pi i x^{T}B_{1}^{-1}A_{1}x}$.
According to (\ref{FMT and FT}), we have 
\begin{equation*}
    \mathcal{L}_{M_{1}}[f](u)=\frac{e^{\pi i u^{T}D_{1}B_{1}^{-1}u}}{i^{\frac{N}{2}}\sqrt{\mathrm{det}(B_{1})}}\widehat{g}\left(B_{1}^{-1}u\right).
\end{equation*}
Since $B_1$ and $D_1$ satisfy (\ref{conditions 3}),
one has 
\begin{align*}
&\nabla \mathcal{L}_{M_{1}}[f](u)\\
=&\frac{1}{i^{\frac{N}{2}}\sqrt{\mathrm{det}{(B_{1})}}}\left[2\pi i D_{1}B_{1}^{-1} ue^{\pi i u^{T}D_{1}B_{1}^{-1}u}\widehat{g}\left(B_{1}^{-1}u\right)+e^{\pi i u^{T}D_{1}B_{1}^{-1}u} \nabla \widehat{g}\left(B_{1}^{-1}u\right)
\right].
\end{align*}
Let $B_{3}=B_{2}A_{1}^{T}-A_{2}B_{1}^{T}$.
Therefore we have
\begin{align}\label{I1+I2}
&i\int_{\mathbb{R}^{N}}u^{T}\mathcal{L}_{M_{1}}[f](u)\left(B_{2}A_{1}^{T}-A_{2}B_{1}^{T}\right)\overline{\nabla \mathcal{L}_{M_{1}}[f](u)}\mathrm{d}u\notag\\
=&i\int_{\mathbb{R}^{N}}u^{T}\mathcal{L}_{M_{1}}[f](u)B_{3}\overline{\nabla \mathcal{L}_{M_{1}}[f](u)}\mathrm{d}u\notag\\
=&I_{1}+I_{2},
\end{align}
where 
\begin{equation*}
    I_{1}=\frac{2\pi}{\left|\mathrm{det}(B_{1})\right|}\int_{\mathbb{R}^{N}}u^{T}B_{3}D_{1}B_{1}^{-1}u\left|\widehat{g}\left(B_{1}^{-1}u\right)\right|^{2}\mathrm{d}u
\end{equation*}
and 
\begin{equation*}
     I_{2}=\frac{i}{\left|\mathrm{det}(B_{1})\right|}\int_{\mathbb{R}^{N}}u^{T}B_{3}\widehat{g}\left(B_{1}^{-1}u\right)\overline{\nabla \widehat{g}\left(B_{1}^{-1}u\right)}\mathrm{d}u.
\end{equation*}
 Note that
\begin{equation*}
I_1=2\pi\int_{\mathbb{R}^{N}}u^{T}B_{3}D_{1}B_{1}^{-1}u\left|\mathcal{L}_{M_{1}}[f](u)\right|^{2}\mathrm{d}u.
\end{equation*}
and 
\begin{equation}\label{grident g}
     \nabla g(x)=\nabla f(x)e^{\pi i x^{T}B_{1}^{-1}A_{1}x}+2\pi i B_{1}^{-1}A_{1}xf(x)e^{\pi i x^{T}B_{1}^{-1}A_{1}x},
\end{equation}
Similar to proof of Proposition \ref{prop-LM}, one has
\begin{align*}
    I_{1}=&\frac{1}{2\pi}\int_{\mathbb{R}^{N}}\left(\nabla f(x)\right)^{T}Q \overline{\nabla f(x)}\mathrm{d}x+2\pi \int_{\mathbb{R}^{N}}x^{T}B_{1}^{-1}A_{1}QB_{1}^{-1}A_{1}x\left|f(x)\right|^{2}\mathrm{d}x\\
    &-i\int_{\mathbb{R}^{N}}x^{T}B_{1}^{-1}A_{1}\left[Q^{T} \nabla f(x) \overline{f(x)}-Q\overline{\nabla f(x)}f(x)\right]\mathrm{d}x,
\end{align*}
where $Q=B_{1}^{T}B_{3}D_{1}$. 
Applying (\ref{nabla f}), we have
\begin{align*}
   &i\int_{\mathbb{R}^{N}}x^{T}B_{1}^{-1}A_{1}\left[Q^{T} \nabla f(x) \overline{f(x)}-Q\overline{\nabla f(x)}f(x)\right]\mathrm{d}x\notag\\
=&-i\int_{\mathbb{R}^{N}}x^{T}B_{1}^{-1}A_{1}(Q-Q^{T})\nabla \left|f(x)\right| \left|f(x)\right|\mathrm{d}x\notag\\
&-2\pi \int_{\mathbb{R}^{N}}x^{T}B_{1}^{-1}A_{1}(Q+Q^{T})\nabla \varphi(x)\left|f(x)\right|^{2}\mathrm{d}x\notag\\
=&\frac{i}{2}\mathrm{tr}\left(B_{1}^{-1}A_{1}\left(Q-Q^{T}\right)\right)-2\pi \int_{\mathbb{R}^{N}}x^{T}B_{1}^{-1}A_{1}(Q+Q^{T})\nabla \varphi(x)\left|f(x)\right|^{2}\mathrm{d}x.
\end{align*}
Since
\begin{equation*}
    \frac{1}{2\pi}\int_{\mathbb{R}^{N}}\left(\nabla f(x)\right)^{T}Q \overline{\nabla f(x)}\mathrm{d}x
=2\pi \int_{\mathbb{R}^{N}}w^{T}Q w\left|\widehat{f}(w)\right|^{2}\mathrm{d}w,
\end{equation*}
we have
\begin{align}\label{I1}
     I_1=&2\pi \int_{\mathbb{R}^{N}}w^{T}Q w\left|\widehat{f}(w)\right|^{2}\mathrm{d}w+2\pi \int_{\mathbb{R}^{N}}x^{T}B_{1}^{-1}A_{1}QB_{1}^{-1}A_{1}x\left|f(x)\right|^{2}\mathrm{d}x\notag\\
    &-\frac{i}{2}\mathrm{tr}\left(B_{1}^{-1}A_{1}\left(Q-Q^{T}\right)\right)+2\pi \int_{\mathbb{R}^{N}}x^{T}B_{1}^{-1}A_{1}(Q+Q^{T})\nabla \varphi(x)\left|f(x)\right|^{2}\mathrm{d}x.
\end{align}
By $B_1^{-1}u=w$ and (\ref{grident g}), one has that
\begin{align*}
    I_{2}
=&i\int_{\mathbb{R}^{N}}w^{T}\widehat{g}(w)B_{1}^{T}B_{3}B_{1}^{-T}\overline{\nabla \widehat{g}(w)}\mathrm{d}w\notag\\
    =&\frac{1}{2\pi}\int_{\mathbb{R}^{N}}\left(\nabla g(x)\right)^{T} B_{1}^{T}B_{3}B_{1}^{-T} \overline{-2\pi ixg(x)}\mathrm{d}x\\
    =&i\int_{\mathbb{R}^{N}}\left(\nabla g(x)\right)^{T} B_{1}^{T}B_{3}B_{1}^{-T} x\overline{g(x)}\mathrm{d}x
    \\
=&i\int_{\mathbb{R}^{N}} x^{T}B_{1}^{-1}B_{3}^{T}B_{1}\nabla g(x) \overline{g(x)}\mathrm{d}t\notag\\
=&i\int_{\mathbb{R}^{N}}x^{T}B_{1}^{-1}B_{3}^{T}B_{1}\nabla f(x) \overline{f(x)}\mathrm{d}x-2\pi \int_{\mathbb{R}^{N}}x^{T}B_{1}^{-1}B_{3}^{T}A_{1}x\left|f(x)\right|^{2}\mathrm{d}x.
\end{align*}
Using (\ref{nabla f}),
we have
\begin{align}\label{I2}
    I_{2}
=&i\int_{\mathbb{R}^{N}}x^{T}B_{1}^{-1}B_{3}^{T}B_{1}\nabla\left|f(x)\right| \left|f(x)\right|\mathrm{d}x-2\pi \int_{\mathbb{R}^{N}}x^{T}B_{1}^{-1}B_{3}^{T}B_{1}\nabla \varphi(x)\left|f(x)\right|^{2}\mathrm{d}x\notag\\
&-2\pi \int_{\mathbb{R}^{N}}x^{T}B_{1}^{-1}B_{3}^{T}A_{1}x\left|f(x)\right|^{2}\mathrm{d}x\notag\\
=&-\frac{i}{2}\mathrm{tr}\left(B_{1}^{-1}B_{3}^{T}B_{1}\right)-2\pi \int_{\mathbb{R}^{N}}x^{T}B_{1}^{-1}B_{3}^{T}B_{1}\nabla \varphi(x)\left|f(x)\right|^{2}\mathrm{d}x\notag\\
&-2\pi \int_{\mathbb{R}^{N}}x^{T}B_{1}^{-1}B_{3}^{T}A_{1}x\left|f(x)\right|^{2}\mathrm{d}x.\notag\\
\end{align}
Combining (\ref{I1+I2}), (\ref{I1}) and (\ref{I2}), we have
\begin{align}\label{I} 
&i\int_{\mathbb{R}^{N}}u^{T}\mathcal{L}_{M_{1}}[f](u)B_{3}\overline{\nabla \mathcal{L}_{M_{1}}[f](u)}\mathrm{d}u\notag\\
=&-\frac{i}{2}\mathrm{tr}\left(B_{1}^{-1}A_{1}\left(Q-Q^{T}\right)+B_{1}^{-1}B_{3}^{T}B_{1}\right)+2\pi \int_{\mathbb{R}^{N}}w^{T}Q w\left|\widehat{f}(w)\right|^{2}\mathrm{d}w\notag\\
&+2\pi \int_{\mathbb{R}^{N}}x^{T}\left(B_{1}^{-1}A_{1}QB_{1}^{-1}A_{1}-B_{1}^{-1}B_{3}^{T}A_{1}\right)x\left|f(x)\right|^{2}\mathrm{d}x\notag\\
&+2\pi \int_{\mathbb{R}^{N}}x^{T}\left(B_{1}^{-1}A_{1}\left(Q+Q^{T}\right)-B_{1}^{-1}B_{3}^{T}B_{1}\right)\nabla \varphi(x)\left|f(x)\right|^{2}\mathrm{d}x.
\end{align}
Note that $A_1$, $B_1$, $C_1$ and $D_1$ satisfy (\ref{conditions 1}), (\ref{conditions 2}), (\ref{conditions 3}) and (\ref{conditions 4}).
Since $B_{3}=B_{2}A_{1}^{T}-A_{2}B_{1}^{T}$, we have
\begin{equation*}
B_{1}^{-1}B_{3}^{T}B_{1}=B_{1}^{-1}A_{1}B_{2}^{T}B_{1}-A_{2}^{T}B_{1},
\end{equation*}
\begin{equation*}
B_{1}^{-1}B_{3}^{T}A_{1}
    =B_{1}^{-1}A_{1}B_{2}^{T}A_{1}-A_{2}^{T}A_{1}
\end{equation*}
and
\begin{align*}
   Q=&B_{1}^{T}B_{3}D_{1}\notag\\
   =&B_{1}^{T}B_{2}A_{1}^{T}D_{1}-B_{1}^{T}A_{2}B_{1}^{T}D_{1}\notag\\
   =&B_{1}^{T}B_{2}\left(\mathbf{I}_{N}+C_{1}^{T}B_{1}\right)-B_{1}^{T}A_{2}B_{1}^{T}D_{1}\notag\\
   =&B_{1}^{T}B_{2}+B_{1}^{T}B_{2}C_{1}^{T}B_{1}-B_{1}^{T}A_{2}B_{1}^{T}D_{1}.
\end{align*}
Then we have
\begin{align*}
B_{1}^{-1}A_{1}Q=&B_{1}^{-1}A_{1}B_{1}^{T}B_{2}+B_{1}^{-1}A_{1}B_{1}^{T}B_{2}C_{1}^{T}B_{1}-B_{1}^{-1}A_{1}B_{1}^{T}A_{2}B_{1}^{T}D_{1}\notag\\
    =&A_{1}^{T}B_{1}^{-T}B_{1}^{T}B_{2}+A_{1}^{T}B_{1}^{-T}B_{1}^{T}B_{2}C_{1}^{T}B_{1}-A_{1}^{T}B_{1}^{-T}B_{1}^{T}A_{2}B_{1}^{T}D_{1}\notag\\
    =&A_{1}^{T}B_{2}+A_{1}^{T}B_{2}C_{1}^{T}B_{1}-A_{1}^{T}A_{2}B_{1}^{T}D_{1},
\end{align*}
\begin{align*}
    B_{1}^{-1}A_{1}Q^{T}
=&B_{1}^{-1}A_{1}B_{2}^{T}B_{1}+B_{1}^{-1}A_{1}B_{1}^{T}C_{1}B_{2}^{T}B_{1}-B_{1}^{-1}A_{1}D_{1}^{T}B_{1}A_{2}^{T}B_{1}\notag\\
    =&B_{1}^{-1}A_{1}B_{2}^{T}B_{1}+A_{1}^{T}B_{1}^{-T}B_{1}^{T}C_{1}B_{2}^{T}B_{1}-B_{1}^{-1}\left(\mathbf{I}_{N}+B_{1}C_{1}^{T}\right)B_{1}A_{2}^{T}B_{1}\notag\\
    =&B_{1}^{-1}A_{1}B_{2}^{T}B_{1}+A_{1}^{T}C_{1}B_{2}^{T}B_{1}-A_{2}^{T}B_{1}-C_{1}^{T}B_{1}A_{2}^{T}B_{1}
\end{align*}
and 
\begin{align*}
    B_{1}^{-1}A_{1}QB_{1}^{-1}A_{1}
=&A_{1}^{T}B_{2}B_{1}^{-1}A_{1}+A_{1}^{T}B_{2}C_{1}^{T}A_{1}-A_{1}^{T}A_{2}D_{1}^{T}B_{1}B_{1}^{-1}A_{1}\notag\\
=&A_{1}^{T}B_{2}B_{1}^{-1}A_{1}+A_{1}^{T}B_{2}C_{1}^{T}A_{1}-A_{1}^{T}A_{2}D_{1}^{T}A_{1}.
`\end{align*}
Therefore we have
\begin{align}\label{B1A1Q-Q}
    &B_{1}^{-1}A_{1}\left(Q-Q^{T}\right)+B_{1}^{-1}B_{3}^{T}B_{1}\notag\\
    =&A_{1}^{T}B_{2}+A_{1}^{T}B_{2}C_{1}^{T}B_{1}-A_{1}^{T}A_{2}B_{1}^{T}D_{1}-A_{1}^{T}C_{1}B_{2}^{T}B_{1}+C_{1}^{T}B_{1}A_{2}^{T}B_{1},
\end{align}
\begin{align}\label{B1A1QB1A1}
&B_{1}^{-1}A_{1}QB_{1}^{-1}A_{1}-B_{1}^{-1}B_{3}^{T}A_{1}\notag\\
=&A_{1}^{T}B_{2}B_{1}^{-1}A_{1}+A_{1}^{T}B_{2}C_{1}^{T}A_{1}-A_{1}^{T}A_{2}D_{1}^{T}A_{1}-B_{1}^{-1}A_{1}B_{2}^{T}A_{1}+A_{2}^{T}A_{1}
\end{align}
and
\begin{align}\label{B1A1Q+Q}
&B_{1}^{-1}A_{1}\left(Q+Q^{T}\right)-B_{1}^{-1}B_{3}^{T}B_{1}\notag\\
=&A_{1}^{T}B_{2}+A_{1}^{T}B_{2}C_{1}^{T}B_{1}-A_{1}^{T}A_{2}B_{1}^{T}D_{1}+A_{1}^{T}C_{1}B_{2}^{T}B_{1}-C_{1}^{T}B_{1}A_{2}^{T}B_{1}.
\end{align}
Combining (\ref{I})-(\ref{B1A1Q+Q}), one has the desired equality (\ref{first lemma}). 
\end{Proof}

\begin{Proof1}
Let $g(x)=f(x)e^{\pi i x^{T}B_{1}^{-1}A_{1}x}$. By (\ref{FMT and FT}), we have
\begin{equation*}
    \mathcal{L}_{M_{1}}[f](u)=\frac{e^{\pi i u^{T}D_{1}B_{1}^{-1}u}}{i^{\frac{N}{2}}\sqrt{\mathrm{det}(B_{1})}}\widehat{g}\left(B_{1}^{-1}u\right).
\end{equation*}
Let $A_{3}=A_{2}D_{1}^{T}-B_{2}C_{1}^{T}$. Similar to proof of Proposition \ref{prop-LM}, by (\ref{grident g}) we have
   \begin{align*}
       &2\pi\int_{\mathbb{R}^{N}}u^{T}\left(A_{2}D_{1}^{T}-B_{2}C_{1}^{T}\right)u\left|\mathcal{L}_{M_{1}}[f](u)\right|^{2}\mathrm{d}u\notag\\
       =&\frac{2\pi}{\left|\mathrm{det}(B_{1})\right|}\int_{\mathbb{R}^{N}}u^{T}A_{3}u\left|\widehat{g}\left(B_{1}^{-1}u\right)\right|^{2}\mathrm{d}u\notag\\
       =&\frac{1}{2\pi}\int_{\mathbb{R}^{N}}\left(\nabla f(x)\right)^{T}B_{1}^{T}A_{3}B_{1}\overline{\nabla f(x)}\mathrm{d}x+2\pi\int_{\mathbb{R}^{N}}x^{T}A_{1}^{T}A_{3}A_{1}x\left|f(x)\right|^{2}\mathrm{d}x,\notag\\
&-i\int_{\mathbb{R}^{N}}x^{T}A_{1}^{T}A_{3}^{T}B_{1}\nabla f(x)\overline{f(x)}\mathrm{d}x+i\int_{\mathbb{R}^{N}}x^{T}A_{1}^{T}A_{3}B_{1}\overline{\nabla f(x)}f(x)\mathrm{d}x
   \end{align*}
     By (\ref{nabla f}), we have
\begin{align*}
  &-i\int_{\mathbb{R}^{N}}x^{T}A_{1}^{T}A_{3}^{T}B_{1}\nabla f(x)\overline{f(x)}\mathrm{d}x+i\int_{\mathbb{R}^{N}}x^{T}A_{1}^{T}A_{3}B_{1}\overline{\nabla f(x)}f(x)\mathrm{d}x\notag\\
=&i\int_{\mathbb{R}^{N}}x^{T}A_{1}^{T}\left(A_{3}-A_{3}^{T}\right)B_{1}\nabla \left|f(x)\right|\left|f(x)\right|\mathrm{d}x\notag\\
&+2\pi\int_{\mathbb{R}^{N}}x^{T}A_{1}^{T}\left(A_{3}+A_{3}^{T}\right)B_{1}\nabla \varphi(x)\left|f(x)\right|^2\mathrm{d}x\notag\\
=&-\frac{i}{2}\mathrm{tr}\left(A_{1}^{T}(A_{3}-A_{3}^{T})B_{1}\right)+2\pi\int_{\mathbb{R}^{N}}x^{T}A_{1}^{T}\left(A_{3}+A_{3}^{T}\right)B_{1}\nabla \varphi(x)\left|f(x)\right|^2\mathrm{d}x.
    \end{align*} 
Since
\begin{equation*}
   \frac{1}{2\pi}\int_{\mathbb{R}^{N}}\left(\nabla f(x)\right)^{T}B_{1}^{T}A_{3}B_{1}\overline{\nabla f(x)}\mathrm{d}x
    =2\pi\int_{\mathbb{R}^{N}} w^{T}B_{1}^{T}A_{3}B_{1}w\left|\widehat{f}(w)\right|^{2}\mathrm{d}w,
    \end{equation*}
we have
\begin{align}\label{J3}
 &2\pi\int_{\mathbb{R}^{N}}u^{T}\left(A_{2}D_{1}^{T}-B_{2}C_{1}^{T}\right)u\left|\mathcal{L}_{M_{1}}[f](u)\right|^{2}\mathrm{d}u\notag\\
    =&-\frac{i}{2}\mathrm{tr}\left(A_{1}^{T}(A_{3}-A_{3}^{T})B_{1}\right)+2\pi\int_{\mathbb{R}^{N}}x^{T}A_{1}^{T}A_{3}A_{1}x\left|f(x)\right|^{2}\mathrm{d}x\notag\\
    &+2\pi\int_{\mathbb{R}^{N}} w^{T}B_{1}^{T}A_{3}B_{1}w\left|\widehat{f}(w)\right|^{2}\mathrm{d}w+2\pi\int_{\mathbb{R}^{N}}x^{T}A_{1}^{T}\left(A_{3}+A_{3}^{T}\right)B_{1}\nabla \varphi(x)\left|f(x)\right|^2\mathrm{d}x.\notag\\
\end{align}
By $A_{3}=A_{2}D_{1}^{T}-B_{2}C_{1}^{T}$, we have
\begin{equation}\label{J5}
    A_{1}^{T}A_{3}A_{1}
     =A_{1}^{T}A_{2}D_{1}^{T}A_{1}-A_{1}^{T}B_{2}C_{1}^{T}A_{1},
\end{equation}
\begin{equation*}
A_{1}^{T}A_{3}B_{1}
     =A_{1}^{T}A_{2}D_{1}^{T}B_{1}-A_{1}^{T}B_{2}C_{1}^{T}B_{1}
\end{equation*}
and
\begin{align}\label{J6}
    B_{1}^{T}A_{3}B_{1}
    =&B_{1}^{T}A_{2}D_{1}^{T}B_{1}-B_{1}^{T}B_{2}C_{1}^{T}B_{1}.
\end{align}
Since $A_1$, $B_1$, $C_1$ and $D_1$ satisfy the last equality of (\ref{conditions 1}), we have
\begin{align*}
A_{1}^{T}A_{3}^{T}B_{1}
=&A_{1}^{T}D_{1}A_{2}^{T}B_{1}-A_{1}^{T}C_{1}B_{2}^{T}B_{1}\notag\\
=&\left(\mathbf{I}_{N}+C_{1}^{T}B_{1}\right)A_{2}^{T}B_{1}-A_{1}^{T}C_{1}B_{2}^{T}B_{1}\notag\\
=&A_{2}^{T}B_{1}+C_{1}^{T}B_{1}A_{2}^{T}B_{1}-A_{1}^{T}C_{1}B_{2}^{T}B_{1}.
\end{align*}
Therefore we have
\begin{align}\label{J4}
   A_{1}^{T}\left(A_{3}-A_{3}^{T}\right)B_{1}=&A_{1}^{T}A_{2}D_{1}^{T}B_{1}-A_{1}^{T}B_{2}C_{1}^{T}B_{1}-A_{2}^{T}B_{1}\notag\\
   &-C_{1}^{T}B_{1}A_{2}^{T}B_{1}+A_{1}^{T}C_{1}B_{2}^{T}B_{1}
\end{align}
and 
\begin{align}\label{J7}
A_{1}^{T}\left(A_{3}+A_{3}^{T}\right)B_{1}=&A_{1}^{T}A_{2}D_{1}^{T}B_{1}-A_{1}^{T}B_{2}C_{1}^{T}B_{1}+A_{2}^{T}B_{1}\notag\\
   &+C_{1}^{T}B_{1}A_{2}^{T}B_{1}-A_{1}^{T}C_{1}B_{2}^{T}B_{1}.
\end{align}
Combining (\ref{J3})-(\ref{J7}), one has (\ref{the third lemma}). 
\end{Proof1}

\end{document}